\providecommand{\tabularnewline}{\\}
\theoremstyle{plain}
\newtheorem{thm}{\protect\theoremname}
  \theoremstyle{plain}
  \newtheorem{prop}{\protect\propositionname}
  \theoremstyle{definition}
  \newtheorem{defn}{\protect\definitionname}
  \theoremstyle{definition}
  \newtheorem{example}{\protect\examplename}
  \theoremstyle{plain}
  \newtheorem{lem}{\protect\lemmaname}
  \theoremstyle{remark}
  \newtheorem{rem}{\protect\remarkname}
\setlist{leftmargin=20pt}
  \providecommand{\definitionname}{Definition}
   \providecommand{\examplename}{Example}
  \providecommand{\lemmaname}{Lemma}
  \providecommand{\propositionname}{Proposition}
  \providecommand{\remarkname}{Remark}
\providecommand{\theoremname}{Theorem}
\begin{document}
\global\long\def\R{\mathbb{R}}

\global\long\def\N{\mathbb{N}}

\global\long\def\C{\mathbb{C}}

\global\long\def\Cl{C}

\global\long\def\tr{\mathrm{tr}}

\global\long\def\phi{\varphi}

\global\long\def\epsilon{\varepsilon}

\global\long\def\div{{\rm div}}

\global\long\def\ceil{{\rm ceil}}

\global\long\def\curl{{\rm curl}}

\global\long\def\supp{{\rm supp}\,}

\global\long\def\sp{{\rm span}}

\global\long\def\ii{\mathbf{i}}

\global\long\def\bo{\partial\Omega}

\global\long\def\e{{\rm e}}

\global\long\def\y{\tilde{y}}

\DeclarePairedDelimiter{\abs}{\lvert}{\rvert}
\DeclarePairedDelimiter{\norm}{\lVert}{\rVert}

\title[Disjoint sparsity  for hybrid imaging]{Disjoint sparsity for signal separation and applications to hybrid  inverse problems in medical imaging}
\author{Giovanni S.\ Alberti}
\address{Department of Mathematics and Applications,  \'Ecole Normale Sup\'erieure, 45 rue d'Ulm, 75005 Paris, France.} 
\email{giovanni.alberti@ens.fr}
\author{Habib Ammari}
\address{Department of Mathematics and et Applications,  \'Ecole Normale Sup\'erieure, 45 rue d'Ulm, 75005 Paris, France.} 
\email{habib.ammari@ens.fr}
\subjclass[2010]{35R30, 65N21, 65T60, 68U10}
\keywords{Morphological component analysis, signal separation,  sparse representations, disjoint sparsity, inverse problems, hybrid imaging,  quantitative photoacoustic tomography}
\thanks{This work was supported by the ERC Advanced Grant Project MULTIMOD-267184.}
\date{August 2, 2015}

\begin{abstract}
The main focus of this work is the reconstruction of the signals $f$
and $g_{i}$, $i=1,\dots,N$, from the knowledge of their sums $h_{i}=f+g_{i}$,
under the assumption that $f$ and the $g_{i}$'s can be sparsely
represented with respect to two different dictionaries $A_{f}$ and
$A_{g}$. This generalizes the well-known ``morphological component
analysis'' to a multi-measurement setting. The main result of the
paper states that $f$ and the $g_{i}$'s can be uniquely and stably
reconstructed by finding sparse representations of $h_{i}$ for every
$i$ with respect to the concatenated dictionary $[A_{f},A_{g}]$,
provided that enough incoherent measurements $g_{i}$ are available.
The incoherence is measured in terms of their mutual disjoint sparsity.

This method finds applications in the reconstruction procedures of
several hybrid imaging inverse problems, where internal data are measured.
These measurements usually consist of the main unknown multiplied
by other unknown quantities, and so the disjoint sparsity approach
can be directly applied. As an example, we show how to apply the method
to the reconstruction in quantitative photoacoustic tomography, also
in the case when the Grüneisen parameter, the optical absorption and
the diffusion coefficient are all unknown.
\end{abstract}
\maketitle

\section{Introduction}

Hybrid, or coupled-physics, inverse problems have been extensively
studied over the last years, both from the mathematical and the experimental
points of view. A hybrid imaging modality consists in the combination
of two types of techniques, one exhibiting the high contrast of tissues
and a second one providing high resolution. Thus, the main drawbacks
of the standard imaging modalities can be overcome, at least theoretically.
Many combinations have been considered, such as optical and acoustic
waves (photoacoustic tomography \cite{2011-kuchment-kunyansky}),
electric currents and ultrasounds (ultrasound modulated EIT \cite{cap2008})
or microwaves and ultrasounds (thermoacoustic tomography \cite{2011-kuchment-kunyansky}).
The reader is referred to \cite{ammaribook, 2012-kuchment,bal2012_review,alberti-capdeboscq-2014,alberti-capdeboscq-2015, seobook}
for a review on the mathematical aspects related to hybrid imaging
problems.

In general, the inversion for such problems involves two steps. In
the first step, an inverse problem related to the high resolution
wave provides certain internal measurements. Such internal data are
usually functionals of the unknown parameters and of certain solutions
of partial differential equations (the unknowns are normally the coefficients
of the PDE). In the second step, the unknown parameter has to be reconstructed
from the knowledge of the internal measurements. This is sometimes referred
to as the quantitative step, since the information on the tissue properties
contained in the internal data is only qualitative. In this paper
we suppose that the first inversion has been performed, and focus
only on the second step.

The quantitative step is normally solved with PDE-based methods, by
combining the internal data with the PDE modeling the problem. Such
approach is sometimes very powerful in the reconstruction \cite{bal2010inverse,cap2011,bal2012_review,2014-albertigs,2013-albertigs}.
However, there may be difficulties in using these methods. First,
the PDE model may be accurate only in some circumstances but not in
others \cite{bal2012_review}. Second, even if the PDE model is accurate,
there may be too many unknowns to have unique reconstruction \cite{bal-kui-2011}.
Third, even in cases when the reconstruction is unique, this may require
the differentiation of the data \cite{cap2011,ammari2012quantitative,alberti2013multiple},
which is known to be an unstable process, or may require additional
assumptions to be satisfied \cite{cap2011,alberti2013multiple,bal2012inversediffusion,ammari2012quantitative,alberti-ammari-ruan-2014}.

The main focus of this paper is an alternative approach to such problem
based on the use of sparse representations, as it was first done by
Rosenthal et al.\ in quantitative photoacoustic tomography (QPAT)
\cite{2009-razansky}. The internal data in a domain $\Omega$ can
be often expressed as the product of the unknown(s) and an expression
involving the solutions of the PDE. (For example, in QPAT the internal
data have the form $H=\Gamma\mu u$, where $\Gamma$ is the Grüneisen
parameter, $\mu$ is the optical absorption and $u$ is the light
intensity.) Taking the logarithm, the inversion corresponds to recovering
two functions $f$ and $g$ from the knowledge of their sum
\[
h(x)=f(x)+g(x),\qquad x\in\Omega.
\]
This problem is, in general, clearly unsolvable. However, it is possible
to exploit the different levels of smoothness of $f$ and $g$. Indeed,
since $f$ represents a property of the medium, such as the log conductivity,
it is typically highly discontinuous. On the other hand, $g$ is an
expression involving the solutions of a PDE, and as such enjoys higher
regularity properties. As a consequence, $f$ and $g$ have different
features, and this can be used to separate them by using a sparse
representation approach.

Two signals $f,g\in\R^{n}$ can be reconstructed from the knowledge
of their sum $h=f+g$ provided that they have different characteristics.
More precisely, they need to be sparsely represented, i.e., with few
atoms, with respect to two incoherent dictionaries $A_{f}$ and $A_{g}$.
This method is usually called ``morphological component analysis''
(MCA), and was introduced by Starck et al.\ in \cite{2004-starck-MCA}
(see  \cite{2001-bofill-zibulevski,2001-donoho-huo,2003-donoho-elad,2005-georgiev-theis-cichocki,2006-MMCA,2009-bobin-moudden-fadili-starck,2010-donoho-kutyniok,2014-kutyniok,2014-toumi-caldarelli-torresani,2013-mccoy-tropp}
for related works and \cite{2012-kutyniok} for a survey on the topic. In particular, Donoho and Kutyniok \cite{2010-donoho-kutyniok,2014-kutyniok} first provided a theoretical foundation of geometric image separation into point and curve singularities by using tools from sparsity methodologies. If compared to these works, the novelty of this paper lies in the particular structure of the measurements, as we now describe.

In this work, motivated by hybrid imaging techniques, where multiple measurements
with the parameters fixed can be taken,  we extend this
method to a multi-measurement setting. In general terms, this corresponds
to the reconstruction of $f$ (and $g_{i}$) from the knowledge of
their sums
\[
h_{i}=f+g_{i},\qquad i=1,\dots,N.
\]
We prove that the MCA approach gives unique and stable reconstruction,
provided that enough incoherent measurements $g_{i}$'s are available.
The incoherence is measured in terms of their mutual disjoint sparsity.
In vague terms, the atoms from $A_{g}$ used to represent $g_{i}$
should change for different measurements (see Definition~\ref{def:complete}
for the precise conditions). Numerical simulations show that taking
several solutions to the relevant PDE yields the necessary incoherence. 

As an example, we discuss the inversion for QPAT, both in the simpler
case when $\Gamma=1$ and in the case with non-constant $\Gamma$
in the diffusive regime for light propagation. For the $\Gamma=1$
case, this method has the advantages of being very robust to noise
and of not requiring a particular model for light propagation, if
compared to the PDE-based approaches \cite{bal2010inverse,2010-bal-jollivet-jugnon}.
In the case when $\Gamma\neq1$ there is no uniqueness in the reconstruction,
even with multiple measurements \cite{bal-kui-2011}; if the parameters
are piecewise constant, uniqueness can be guaranteed, but the inversion
may be very sensible to noise \cite{2014-scherzer-naetar}. We propose
a combination of the disjoint sparsity signal separation method and
of the PDE method, which provides a satisfactory reconstruction, without
requiring piecewise constant parameters. Numerical simulations are
provided.

This work is structured as follows. In Section~\ref{sec:Sparse-representations-and}
we recall the basic notions related to sparse representations and
present the method of morphological component analysis. In Section~\ref{sec:Disjoint-sparsity-for},
the signal separation method based on multiple measurements and disjoint
sparsity is described in detail and the main reconstruction result
is proved. The numerical implementation of the method, together with
the possible choices for the dictionaries $A_{f}$ and $A_{g}$ in
hybrid imaging, are discussed in Section~\ref{sec:Numerical-implementation}.
Next, this method is applied to hybrid imaging in Section~\ref{sec:Applications-to-hybrid},
and several numerical simulations are provided. Some concluding
remarks are contained in Section~\ref{sec:Conclusions}. The proofs of the uncertainty principles stated in Section~\ref{sec:Disjoint-sparsity-for} are given in Appendix~\ref{sec:appendix}.

\section{\label{sec:Sparse-representations-and}Sparse representations and
morphological component analysis}

In this section, we introduce the basic notions related to sparse
representations and morphological components analysis.

\subsection{Introduction to sparse representations}

This presentation follows \cite{2009-donoho-review}. Let $n$ be
the dimension of the signal space, namely we shall consider signals
$f\in\R^{n}$. Since in this paper we shall consider only images,
we should think of $n$ as being the resolution of the image, namely
$n=d\times d$, where $d$ is the number of pixels in each row and
column. However, in this section we shall use the more general notation
$f\in\R^{n}$, and think of a signal as a column vector of length
$n$. We now discuss how a signal can be represented as a superposition
of given atoms in a fixed dictionary. More precisely, let $A\in\R^{n\times m}$
be a dictionary, namely a collection of $m$ atoms, that are the column
vectors of $A$. We assume $m\ge n$ and that $A$ has full rank.
Thus, it is always possible to express $f$ as a linear combination
of these atoms, i.e. to write
\begin{equation}
f=Ay\label{eq:f=00003DAy}
\end{equation}
for some vector of coefficients $y\in\R^{m}$. The most common situation
is when $m=n$ and $A$ is an orthonormal basis: in this case the
coefficient $y$ is uniquely determined. However, the situation we
are interested in is when $m>n$. In this case the dictionary $A$
is redundant, since $f$ can be represented in many different ways
as a combination of the atoms in $A$. In other words, the system
\eqref{eq:f=00003DAy} is underdetermined and has many solutions $y\in\R^{m}$. 

The key observation is that this non-uniqueness can be exploited by
selecting the best representation $y$. One way to measure the quality
of a representation $y$ is given by its sparsity, which can be quantified
by the number of non-zero coefficients of $y$
\[
\left\Vert y\right\Vert _{0}:=\#\{\alpha\in\{1,\dots,m\}:y(\alpha)\neq0\},
\]
where the symbol $\#$ denotes the cardinality of a set. The representation
\eqref{eq:f=00003DAy} is called \emph{sparse} if $\left\Vert y\right\Vert _{0}\ll m$.
From the theoretical point of view, the sparsest representation can
be found by minimizing the following problem
\begin{equation}
\min_{y\in\R^{m}}\left\Vert y\right\Vert _{0}\quad\text{subject to \ensuremath{Ay=f.}}\label{eq:sparse min}
\end{equation}
The practical search for the minimum poses highly non trivial difficulties,
and the description of the main issues goes beyond the scope of this
work. Algorithms such as Matching Pursuit \cite{1993-mallat-zhang}
or Basis Pursuit \cite{2001-basispursuit} can often be successfully
used to find the sparsest solution. More details will be given in
Section~\ref{sec:Numerical-implementation}.

The choice of the dictionary $A$ clearly plays a fundamental role
in this context. Indeed, a signal $f$ admits a sparse representation
with respect to a dictionary $A$ if $f$ can be written as combination
of few atoms in $A$. Therefore, the dictionary $A$ has to be chosen
to capture the main features of the signals we consider. Many choices
of dictionaries for images are available, and a detailed discussion
is presented in Section~\ref{sec:Numerical-implementation}.

In the presence of noise, it is helpful to consider a relaxation of
\eqref{eq:sparse min} and to allow a small error between the signal
$f$ and its representation in terms of the atoms of $A$. Thus, the
minimization problem becomes
\[
\min_{y\in\R^{m}}\left\Vert y\right\Vert _{0}\quad\text{subject to \ensuremath{\left\Vert Ay-f\right\Vert _{2}\le\epsilon,}}
\]
for some $\epsilon>0$, or equivalently
\[
\min_{y\in\R^{m}}\left\Vert y\right\Vert _{0}+\lambda\left\Vert Ay-f\right\Vert _{2},
\]
for a suitable Lagrange multiplier $\lambda>0$.

\subsection{Introduction to morphological component analysis (MCA)}

One of the relevant applications of sparse representations is related
to the separation of a signal into its constitutive components, provided
they have different features. We shall describe the method discussed
in \cite{2004-starck-MCA}. Suppose that a signal $h\in\R^{n}$ can
be written as a sum
\[
h=\tilde{f}+\tilde{g},
\]
with $\tilde{f},\tilde{g}\in\R^{n}$. The problem under consideration
is the reconstruction of $\tilde{f}$ and $\tilde{g}$ from the knowledge
of $h$, under the assumption that $\tilde{f}$ and $\tilde{g}$ have
distinctive characteristics. This assumption can be expressed in terms
of sparse representations. Namely, suppose that there exist two dictionaries
$A_{f}\in\R^{n\times m_{f}}$ and $A_{g}\in\R^{n\times m_{g}}$ such
that:
\begin{enumerate}
\item $\tilde{f}$ can be sparsely represented with respect to $A_{f}$
but not with respect to $A_{g}$;
\item and $\tilde{g}$ can be sparsely represented with respect to $A_{g}$
but not with respect to $A_{f}$.
\end{enumerate}
Under these heuristic conditions (which will be made precise below), a strategy to find $\tilde{f}$ and $\tilde{g}$
may be to find a sparse representation $y=\left[\begin{smallmatrix}y_{f}\\
y_{g}
\end{smallmatrix}\right]$ of $h$ with respect to the concatenated dictionary $A=[A_{f,}A_{g}]$
and then to write $f=A_{f}y_{f}$ and $g=A_{g}y_{g}$. As we have
seen before, the sparse representation $y$ is the minimum of the
minimization problem 
\[
\min_{y\in\R^{m_{f}+m_{g}}}\left\Vert y\right\Vert _{0}\quad\text{subject to \ensuremath{[A_{f,}A_{g}]\left[\begin{smallmatrix}y_{f}\\
y_{g}
\end{smallmatrix}\right]=h,}}
\]
or, in the presence of noise, of
\[
\min_{y\in\R^{m_{f}+m_{g}}}\left\Vert y\right\Vert _{0}\quad\text{subject to \ensuremath{\left\Vert [A_{f,}A_{g}]\left[\begin{smallmatrix}y_{f}\\
y_{g}
\end{smallmatrix}\right]-h\right\Vert _{2}\le\epsilon}}.
\]

Even though this procedure is successful in many practical cases \cite{2004-starck-MCA,2005-inpainting,2009-razansky},
a proof of the correct reconstruction, i.e.\ $f=\tilde{f}$ and $g=\tilde{g}$,
is only valid in an ideal situation, which we now describe. In the
case when $A_{f}$ and $A_{g}$ are both orthonormal sets, the proof
is based on the following uncertainty principle \cite[Theorem 1]{2002-elad-uncertainty}.
\begin{prop}
\label{prop:uncertainty}Let $\{a_1,\dots,a_{m_A}\}$ and $\{b_1,\dots,b_{m_B}\}$ be two orthonormal sets of vectors in $\R^n$. Take a vector $h\in\R^{n}\setminus\{0\}$
and suppose it has the following representations
\[
h=Ay_{A}=By_{B}
\]
with respect to  $A=[a_{1},\dots,a_{m_A}]$ and
$B=[b_{1},\dots,b_{m_B}]$. Then
\[
\left\Vert y_{A}\right\Vert _{0}+\left\Vert y_{B}\right\Vert _{0}\ge2/M,
\]
where
\begin{equation}\label{eq:mutual}
 M=\max_{i,j}|(a_{i},b_{j})_{2}|
\end{equation}
is the \emph{mutual coherence}.
\end{prop}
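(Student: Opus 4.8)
The plan is to bound $\left\Vert h\right\Vert _{2}^{2}$ from above in a way that forces the two supports to be large. First I would exploit orthonormality: since the columns of $A$ are orthonormal, $A^{T}A$ is the identity, so $\left\Vert h\right\Vert _{2}^{2}=(Ay_{A},Ay_{A})_{2}=\left\Vert y_{A}\right\Vert _{2}^{2}$, and the same computation for $B$ gives $\left\Vert h\right\Vert _{2}^{2}=\left\Vert y_{B}\right\Vert _{2}^{2}$. Thus both representations have the same Euclidean length as $h$ itself, and since $h\neq0$ this common value is strictly positive.

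The crucial step is to rewrite $\left\Vert h\right\Vert _{2}^{2}$ by pairing the two \emph{different} representations against each other:
\[
\left\Vert h\right\Vert _{2}^{2}=(Ay_{A},By_{B})_{2}=\sum_{i,j}y_{A}(i)\,y_{B}(j)\,(a_{i},b_{j})_{2}.
\]
Taking absolute values and bounding every cross term $|(a_{i},b_{j})_{2}|$ by the mutual coherence $M$ from \eqref{eq:mutual} yields
\[
\left\Vert h\right\Vert _{2}^{2}\le M\sum_{i,j}|y_{A}(i)|\,|y_{B}(j)|=M\left\Vert y_{A}\right\Vert _{1}\left\Vert y_{B}\right\Vert _{1}.
\]
Next I would pass from the $\ell^{1}$ norm to the $\ell^{2}$ norm using sparsity: by Cauchy--Schwarz applied on the support of each vector, $\left\Vert y_{A}\right\Vert _{1}\le\sqrt{\left\Vert y_{A}\right\Vert _{0}}\,\left\Vert y_{A}\right\Vert _{2}$, and likewise for $y_{B}$.

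Combining these estimates with the norm identities from the first step gives
\[
\left\Vert h\right\Vert _{2}^{2}\le M\sqrt{\left\Vert y_{A}\right\Vert _{0}\left\Vert y_{B}\right\Vert _{0}}\;\left\Vert y_{A}\right\Vert _{2}\left\Vert y_{B}\right\Vert _{2}=M\sqrt{\left\Vert y_{A}\right\Vert _{0}\left\Vert y_{B}\right\Vert _{0}}\;\left\Vert h\right\Vert _{2}^{2}.
\]
Since $\left\Vert h\right\Vert _{2}^{2}>0$ I can cancel it, obtaining $\sqrt{\left\Vert y_{A}\right\Vert _{0}\left\Vert y_{B}\right\Vert _{0}}\ge1/M$, and the AM--GM inequality $\tfrac{1}{2}(\left\Vert y_{A}\right\Vert _{0}+\left\Vert y_{B}\right\Vert _{0})\ge\sqrt{\left\Vert y_{A}\right\Vert _{0}\left\Vert y_{B}\right\Vert _{0}}$ upgrades the bound on the product into the claimed bound on the sum. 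The only genuinely delicate point is the pairing identity in the middle step: the whole argument hinges on expressing the single quantity $\left\Vert h\right\Vert _{2}^{2}$ through both dictionaries simultaneously, so that $M$ enters and couples the two supports. Everything else is a routine chain of Cauchy--Schwarz and AM--GM, and the only thing to watch is that no constant is lost, so that the sharp factor $2/M$ survives.
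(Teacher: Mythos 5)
Your proof is correct and follows essentially the same route as the paper's: both arguments pit the two representations against each other through the cross inner products $(a_{i},b_{j})_{2}$, bound these by the mutual coherence $M$, invoke Cauchy--Schwarz to bring in the support sizes, use the orthonormality to get $\left\Vert y_{A}\right\Vert _{2}=\left\Vert h\right\Vert _{2}=\left\Vert y_{B}\right\Vert _{2}>0$, and finish by converting the resulting product bound $\sqrt{\left\Vert y_{A}\right\Vert _{0}\left\Vert y_{B}\right\Vert _{0}}\ge1/M$ into the sum bound via AM--GM. The only cosmetic difference is that you estimate the symmetric pairing $(Ay_{A},By_{B})_{2}$ through $\ell^{1}$ norms, whereas the paper expands each coefficient $y_{B}(j)=(Ay_{A},b_{j})_{2}$ and passes to the $\ell^{2}$ norm coefficient-wise; both reductions are equivalent and lose no constants.
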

We provide a proof for completeness.
\begin{proof}
Let $S_A$ and $S_B$ denote the supports of $y_A$ and $y_B$, respectively. By assumption we have $y_B(j)=(h,b_j)_2=\sum_{i\in S_A} y_A(i) (a_i,b_j)_2$, whence by Cauchy-Schwartz inequality we obtain $|y_B(j)|\le \left\Vert y_A \right\Vert_2 \sqrt{\# S_A} M$. Thus, since $\left\Vert y_{A}\right\Vert _{0}=\# S_A$ and $\left\Vert y_{B}\right\Vert _{0}=\# S_B$, this implies 
\[
\left\Vert y_B \right\Vert_2\le\left\Vert y_A \right\Vert_2\sqrt{(\# S_A)(\# S_B)} M\le \left\Vert y_A \right\Vert_2 (\left\Vert y_{A}\right\Vert _{0}+\left\Vert y_{B}\right\Vert _{0}) \frac{M}{2}.
\]
This concludes the proof, since the orthonormality of  $A$ and $B$ implies $\left\Vert y_A \right\Vert_2=\left\Vert h \right\Vert_2=\left\Vert y_B \right\Vert_2$ (Lemma~\ref{lem:properties} part 1).
\end{proof}
In general, it is easy to see that if $A$ and $B$ are orthonormal bases then $1/\sqrt{n}\le M\le 1$ \cite{2001-donoho-huo}. As a simple consequence of this result \cite[Theorem 2]{2002-elad-uncertainty},
we have that if $y^{1}\in\R^{2n}$ and $ $$y^{2}\in\R^{2n}$ are
two representations of $ $$h$ with respect to the concatenated dictionary
$A=[A_{f,}A_{g}]$, then
\[
\left\Vert y^{1}\right\Vert _{0}+\left\Vert y^{2}\right\Vert _{0}\ge2/M.
\]
Therefore, if $\tilde{f}$ and $\tilde{g}$ have representations $\y_{f}$
and $\y_{g}$ satisfying $\left\Vert \y_{f}\right\Vert _{0}+\left\Vert \y_{g}\right\Vert _{0}<1/M$,
then the above method provides the correct reconstruction.

In practice, the assumption $\left\Vert \y_{f}\right\Vert _{0}+\left\Vert \y_{g}\right\Vert _{0}<1/M$
is almost never satisfied, and so the above argument remains only
a theoretical speculation. However, when the multi-measurement case
is considered, correct and stable reconstruction can be rigorously
proved. This theoretical result is also validated by several numerical
simulations. These aspects are discussed in the following sections.

\section{\label{sec:Disjoint-sparsity-for}Disjoint sparsity for morphological
component analysis}

\subsection{Introduction and main assumptions}

Motivated by several hybrid imaging modalities (see Section~\ref{sec:Applications-to-hybrid}),
we generalize the MCA problem to a multi-measurement setting. The
reader is referred to \cite{2001-bofill-zibulevski,2005-georgiev-theis-cichocki,2006-MMCA}
for other similar variations.

Let $h_{1},\dots,h_{N}\in\R^{n}$ be $N$ signals that can be decomposed
as
\[
h_{i}=\tilde{f}+\tilde{g}_{i},\qquad i=1,\dots,N,
\]
with $\tilde{f},\tilde{g}_{i}\in\R^{n}$. We want to study the problem
of finding $\tilde{f}$ and $\tilde{g}_{i}$ from the knowledge of
$h_{i}$ for $i=1,\dots,N$. The case $N=1$ was discussed in the
previous section. We shall show that as $N$ becomes bigger, the above
problem becomes much more treatable, and that the sparsity approach
introduced before always provides the correct reconstruction, also
in the presence of noise. As before, let $A_{f}$ and $A_{g}$ be
the dictionaries with respect to which $\tilde{f}$ and $\tilde{g}_{i}$
have sparse representations, respectively. Note that all the $\tilde{g}_{i}$'s
can be sparsely represented with the same dictionary $A_{g}$: this is a crucial assumption of this approach. Assume that the atoms of $A_{f}$  are normalized to $1$ and that 
\begin{equation}\label{ass:A_g}
\text{ the atoms of $A_g$ constitute an orthonormal set of $\R^n$.}
\end{equation}
Thus, $A_g$ can be completed to an orthonormal basis $[A_g,A_g^\perp]$ for some $A_g^\perp\in\R^{n\times (n-m_g)}$.

The reconstruction method applied to this case consists in the minimization
of
\begin{equation}
\min_{y\in\R^{m_{f}+Nm_{g}}}\left\Vert y\right\Vert _{0}\quad\text{subject to $\bigl\Vert A_{f}y_{f}+A_{g}y_{g}^{i}-h_{i}\bigr\Vert_{2}\le\epsilon$, $i=1\dots,N$},
\label{eq:min multi}
\end{equation}
where we have used the notation $y=\,^{t}[^{t}y_{f},\,^{t}y_{g}^{1},\dots,\,^{t}y_{g}^{N}]$. Here, the superscript $t$ denotes the transpose. 
To model the case with added noise, we write
\begin{equation}
h_{i}=\tilde{f}+\tilde{g}_{i}+n_{i},\qquad i=1,\dots,N,\label{eq:hi noise}
\end{equation}
where $\tilde{f}$ and the $\tilde{g}_{i}$'s represent the true signals,
the $h_{i}$'s are the measured signals and $n_{i}$ is such that
\begin{equation}
\left\Vert n_{i}\right\Vert _{2}\le\eta,\qquad i=1,\dots,N\label{eq:noise}
\end{equation}
for some small $\eta>0$. 

In the applications we have in mind (Section~\ref{sec:Applications-to-hybrid}),
the signal $\tilde{f}$ represents (the logarithm of) a physical constitutive
parameter, while the $\tilde{g}_{i}$'s usually quantify the injected
fields, e.g., the electric field or the light intensity. As such, $\tilde{f}$
is given and fixed, and we have no control on it. On the other hand,
the $\tilde{g}_{i}$'s come from the measurements, and can be indirectly
controlled. More precisely, the $\tilde{g}_{i}$'s depend on the solutions
of a certain PDE, whose coefficients are unknown, but whose boundary
values can be chosen: in this sense the $\tilde{g}_{i}$'s can be
controlled. It is therefore natural to give some assumptions on the
$\tilde{g}_{i}$'s.

The main requirements are that the $\tilde{g}_{i}$'s should be \emph{sufficiently
many and incoherent}%
\footnote{Similar assumptions of enough independent measurements are required
also when using PDE methods for hybrid inverse problems (see $\S$~\ref{sub:Introduction-to-hybrid}).%
}. This will be mainly expressed by means of their disjoint sparsity
with respect to $A_{g}$. (Disjoint sparsity was used in \cite{2006-MMCA},
while joint sparsity has been extensively used in compressive sensing
\cite{2005-MMV,2006-chen-huo-MMV}.) We shall therefore write
\begin{equation}
\bigl\Vert A_{f}\y_{f}-\tilde{f}\bigr\Vert_{2}\le\rho_f,\quad\left\Vert A_{g}\y_{g}^{i}-\tilde{g}_{i}\right\Vert _{2}\le\rho_g,\qquad i=1,\dots,N\label{eq:thm-assumption}
\end{equation}
for some $\rho_f,\rho_g>0$.
The approximation allows a small error between the true signals and
their sparse representations. We shall prove that under suitable assumptions,
a minimizer of \eqref{eq:min multi} provides the correct reconstruction,
up to a factor that is small in $\epsilon:=\rho_f+\rho_g+\eta$. In terms of the coefficient vectors
$\{\y_{g}^{1},\dots,\y_{g}^{N}\}$, the required assumptions can be
written as follows.
\begin{defn}
\label{def:complete}Take $\beta,D>0$,  $N\in\N^{*}$, $\y_{f}\in\R^{m_{f}}$
and $\y_{g}^{1},\dots,\y_{g}^{N}\in\R^{m_{g}}$. We say that $\{\y_{g}^{1},\dots,\y_{g}^{N}\}$
is a $(\beta,D)$-complete set of measurements if the following two conditions
hold true:
\begin{description}
\item [{CS1}] if $|\y_{g}^{i}(\alpha)-\y_{g}^{j}(\alpha)|\le\beta$   and $\y_{g}^{i}(\alpha)\y_{g}^{j}(\alpha)\neq 0$ for
some $\alpha\in\{1,\dots,m_{g}\}$ then $i=j$;
\item [{CS2}] for every $q\in\R^{m_f}$ such that $\left\Vert A_f q \right\Vert_2> D$ and $\left\Vert ^t\!A_g^\perp A_f q \right\Vert_2\le 2/3$
there holds
\begin{equation}
\#(\supp q\setminus\supp\y_{f})+\sum_{i=1}^{N}\#( S_q\setminus\supp\y_{g}^{i})
>\#\bigcup_{i=1}^{N}\supp\y_{g}^{i}+\left\Vert \y_{f}\right\Vert _{0},\label{eq:CS2}
\end{equation}
where $S_q=\{\alpha:|(^t\!A_{g}A_f q)(\alpha)|\ge 1\}$.
\end{description}
\end{defn}
Let us comment on these two requirements. The first condition, CS1,
is a constraint on the incoherence of the $\tilde{g}_{i}$'s in terms
of the values of their coefficient vectors. More precisely, the coefficients
relative to the same atom for two different $\tilde{g}_{i}$'s cannot
be too close. This assumption could be relaxed by allowing a fixed
number of $\y_{g}^{i}$ to have the same coefficients, but for simplicity
of exposition we have decided to omit this generalization.

While CS1 is mainly a technical hypothesis, condition CS2 is the main
assumption related to the disjoint sparsity of the $\tilde{g}_{i}$'s.
Indeed, when the sets $\supp\y_{g}^{i}$ are small (the representation
is sparse) and substantially change when $i$ varies (disjoint), then
it becomes possible to satisfy CS2 by taking $N$ large enough, since
the right-hand side is bounded by $\left\Vert \y_{f}\right\Vert _{0}+m_{g}$. Note that CS2
can always be satisfied by choosing the $\y_{g}^{i}$ so that $\#\{i:\y_{g}^{i}(\alpha)=0\}>\#\bigcup_{i}\supp\y_{g}^{i}+\left\Vert \y_{f}\right\Vert _{0}$
for every $\alpha$, but this represents only a worst case scenario. In general, the smaller $\supp\y_{f}$ and $\supp\y_{g}^{i}$
are, the easier it becomes to satisfy CS2.

\begin{rem}\label{rem:noise}
It is expected that the number of measurements $N$ should increase as the noise level $\eta$ becomes bigger. Indeed, to have a good reconstruction, we need to keep $\epsilon=\rho_f+\rho_g+\eta$ small. Thus, if the noise level $\eta$ increases, the quantities $\rho_f$ and $\rho_g$ have to become smaller. In other words, the sparse approximations in \eqref{eq:thm-assumption} have to be more precise, which in turn requires  the support of  $\y_f$ and $\y_g^i$ to be bigger, and so a higher number of measurements is needed to satisfy CS2, as we heuristically observed above (see Example~\ref{exa:comb} below for a concrete example).
\end{rem}

\subsection{Uncertainty principles}
As we have already pointed out, the smaller $\supp\y_{f}$ and $\supp\y_{g}^{i}$
are, the easier it becomes to satisfy CS2. Moreover, when $A_{f}$
and $A_{g}$ are two orthonormal bases, by Proposition~\ref{prop:uncertainty}
we have
\[
\bigl\Vert \,q\bigr\Vert_{0}+\left\Vert ^t\!A_{g}A_f q\right\Vert _{0}\ge2/M,
\]
where $M$ is the mutual coherence of $A_{f}$ and $A_{g}$ defined in \eqref{eq:mutual}. However, this inequality is not directly applicable to our context, since $\left\Vert ^t\!A_{g}A_f q\right\Vert _{0}$ does not appear explicitly in \eqref{eq:CS2} and  $A_g$ will not be a basis in the applications. The following generalization of the uncertainty principle guarantees that the same bound holds, provided that $D$ is big enough. The proof is given in Appendix~\ref{subsec:appendix}.
\begin{prop}\label{prop:normalized_UP}
Assume that $A_f$  is an  orthonormal basis of $\R^n$ and that $A_g$ is an orthonormal set of $\R^n$ and let $M$ denote their mutual coherence.  There exists $D>0$ such that
\[
 \bigl\Vert q \bigr\Vert_{0}+\#\{\alpha:|(^t\!A_{g}A_f q)(\alpha)|\ge 1\}\ge2/M,
\]
for every $q\in\R^n$ with $\left\Vert A_f q\right\Vert _{2}>D$ and $\left\Vert ^t\!A_g^\perp A_f q\right\Vert _{2}\le 2/3$.
\end{prop}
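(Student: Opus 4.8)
The plan is to mimic the proof of Proposition~\ref{prop:uncertainty}, but to track the Euclidean mass lost when one passes from the orthonormal basis $[A_g,A_g^\perp]$ to its sub-collection $A_g$, and to exploit the hypothesis $\norm{A_f q}_2>D$ to absorb the resulting error terms. First I would write $h=A_f q$ and $c={}^t\!A_g A_f q={}^t\!A_g h$, so that the entries $c(\alpha)=(h,b_\alpha)_2$ are the coefficients of $h$ with respect to the atoms $b_\alpha$ of $A_g$. Since $A_f=[a_1,\dots,a_n]$ is an orthonormal basis we have $\norm{q}_2=\norm{h}_2>D$, and since $[A_g,A_g^\perp]$ is an orthonormal basis the Pythagorean identity gives $\norm{c}_2^2=\norm{h}_2^2-\norm{{}^t\!A_g^\perp A_f q}_2^2\ge \norm{h}_2^2-4/9$, using the second hypothesis.

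Next I would set $P=\norm{q}_0$, $T=\{\alpha:\abs{c(\alpha)}\ge1\}$ and $Q=\#T$, which are exactly the two quantities appearing in the claim. The upper bound on the coefficients is the one already used in Proposition~\ref{prop:uncertainty}: expanding $c(\alpha)=\sum_{i\in\supp q}q(i)(a_i,b_\alpha)_2$ and applying Cauchy--Schwarz yields $c(\alpha)^2\le \norm{q}_2^2\, P\, M^2$ for every $\alpha$, hence $\sum_{\alpha\in T}c(\alpha)^2\le QP\,\norm{q}_2^2 M^2$. For the matching lower bound, I would note that $A_g$ has at most $m_g$ atoms and that each $\alpha\notin T$ contributes $c(\alpha)^2<1$, so $\sum_{\alpha\notin T}c(\alpha)^2<m_g$ and therefore $\sum_{\alpha\in T}c(\alpha)^2=\norm{c}_2^2-\sum_{\alpha\notin T}c(\alpha)^2>\norm{h}_2^2-4/9-m_g$. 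Combining the two estimates and dividing by $\norm{h}_2^2=\norm{q}_2^2$ gives
\[
QP\,M^2>1-\frac{4/9+m_g}{\norm{h}_2^2}>1-\frac{4/9+m_g}{D^2}.
\]

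It then remains to convert this into $P+Q\ge2/M$, and this is where the size of $D$ enters. Arguing by contradiction, suppose $P+Q<2/M$; by the arithmetic--geometric mean inequality $PQ<1/M^2$, and since $PQ$ is a non-negative integer this forces $PQ\le\lceil 1/M^2\rceil-1$, so that $PQ\,M^2\le1-\delta$ with $\delta:=1-(\lceil 1/M^2\rceil-1)M^2>0$, a number depending only on the fixed coherence $M$. Choosing $D$ large enough that $(4/9+m_g)/D^2<\delta$ would then produce $1-\delta<QP\,M^2\le1-\delta$, a contradiction; hence $P+Q\ge2/M$ for this $D$.

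The only genuinely delicate point is this last step. The coefficient estimate only yields $QP\,M^2>1-\epsilon$ rather than $QP\,M^2\ge1$, so one cannot conclude directly; it is the integrality of the product $PQ$, together with the strict inequality $PQ<1/M^2$, that creates the gap $\delta$ which a sufficiently large $D$ can beat, and this is precisely why the threshold $D$ must be allowed to depend on $A_f$ and $A_g$ through $M$ and $m_g$. I would also record at the outset that $\norm{A_f q}_2>D>0$ rules out $q=0$, so $P\ge1$, and that the choice $D^2>4/9+m_g$ forces $\norm{c}_2^2>m_g$ and hence $Q\ge1$; this keeps $PQ\ge1$ and makes the AM--GM step meaningful (and it renders the bound trivially true in the degenerate case $M=1$, where $2/M=2\le P+Q$ automatically).
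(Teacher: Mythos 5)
Your proof is correct, but it takes a genuinely different route from the paper's. The paper argues by compactness: it introduces the quantity $\xi_p$ (the $p$-th largest absolute entry) and proves in Lemma~\ref{lem:xi} that $\xi_p(\,^t\!A_fA_gv)$ admits a uniform positive lower bound $C_\xi$ over sparse unit vectors $v$; it then splits $^t\!A_gA_fq=v+r$ by thresholding at level $1$ (so $\norm{r}_\infty\le 1$, $\norm{r}_2\le\sqrt{m_g}$), applies the exact uncertainty principle of Proposition~\ref{prop:uncertainty} to the normalized thresholded part $v'$, and uses the perturbation identity $q=(\,^t\!A_fA_gv')\norm{v}_2+z$ with $\norm{z}_\infty\le\sqrt{m_g}+\frac{2}{3}$ to conclude that the $p=\ceil(2/M)-\norm{v}_0$ large entries of $^t\!A_fA_gv'$ survive and force $\norm{q}_0\ge p$; the resulting threshold $D=(\sqrt{m_g}+\frac{2}{3})(1+C_\xi^{-1})$ therefore involves the non-explicit compactness constant $C_\xi$. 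You instead redo the Elad--Bruckstein argument quantitatively: Cauchy--Schwarz bounds the energy of the large coefficients above by $PQM^2\norm{q}_2^2$, Pythagoras together with $\norm{\,^t\!A_g^\perp A_fq}_2\le 2/3$ bounds it below by $\norm{q}_2^2-4/9-m_g$, and the integrality of $PQ$ combined with AM--GM converts the strict inequality $PQ<1/M^2$ into a gap $\delta=1-(\lceil 1/M^2\rceil-1)M^2>0$ depending only on $M$, which a large enough $D$ beats. All steps check out, including the two corner cases you flag ($P\ge 1$ and $Q\ge 1$); strictly speaking these are not even needed, since $PQ=0$ already contradicts $PQM^2>1-\delta\ge 0$. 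What your route buys is a fully explicit threshold, $D^2>(4/9+m_g)/\delta$, with no compactness argument; what the paper's route buys is reusable machinery---Lemma~\ref{lem:xi} and the thresholding/perturbation scheme are exactly what is recycled in the proof of Proposition~\ref{prop:new_uncertainty}, where the lower bound on $\norm{\,^t\!A_fA_gv}_0$ comes from the Haar-wavelet structure rather than from mutual coherence, so a coherence-based counting argument like yours would not transfer to that setting.
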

\begin{rem}\label{rem:N1}
In the single measurement case (if  $A_f$ is an  orthonormal basis), CS1 and CS2 correspond to the condition $\left\Vert \y_{f}\right\Vert _{0}+\left\Vert \y_{g}\right\Vert _{0}<1/M$, thereby reobtaining the hypothesis discussed in the previous section. Indeed, if $N=1$ then CS1 is immediately satisfied. Moreover, CS2 becomes
\[
 \#\supp q\setminus\supp\y_{f}+\#\{\alpha:|(^t\!A_{g}A_f q)(\alpha)|\ge 1\}\setminus\supp\y_{g}
>\left\Vert \y_{g}\right\Vert _{0}+\left\Vert \y_{f}\right\Vert _{0}.
\]
By Proposition~\ref{prop:normalized_UP}, the left hand side is bounded by below by $2/M-(\left\Vert \y_{f}\right\Vert _{0}+\left\Vert \y_{g}\right\Vert _{0})$, and so the above inequality is satisfied provided that $\left\Vert \y_{f}\right\Vert _{0}+\left\Vert \y_{g}\right\Vert _{0}<1/M$.
\end{rem}
The more incoherent the two bases are, the bigger $2/M$ is, and therefore
the bigger the left-hand side in \eqref{eq:CS2} is. It is not a surprise
that the incoherence of the bases makes these assumptions easier to
be satisfied, since this was the starting point of the MCA discussed
in the previous section. 

Let us discuss a simple example to show the relation between the sparsity of the coefficients and the number of measurements $N$.
\begin{example}\label{exa:comb}
Let us consider the simplest problem of the separation of spikes and sinusoids in 1D. Let $A_f=I_n\in\R^{n\times n}$ be the identity matrix and $A_g$ be the Fourier basis. Their mutual coherence is $M=1/\sqrt{n}$. Let $\y_f\in\R^n$ be such that $\left\Vert \y_f\right\Vert_0=k$ and $\y_g^i\in\R^n$ be such that $\left\Vert \y_g^i\right\Vert_0=l$ and with disjoint support, for $i=1,\dots,N$. We would like to investigate the link between the number of measurements $N$ and the sparsity of $\y_f$ and $\y_g^i$, i.e.\ with the quantities $k$ and $l$.

Thanks to the assumption on the supports of the $\y_g^i$'s, condition CS1 is automatically satisfied for any $\beta>0$. As far as CS2 is concerned, unfortunately we cannot check its validity for all possible choices of $q$. Thus, we make an heuristic choice with one of the worst possibilities, the Dirac comb, for which the inequality of the uncertainty principle becomes an equality. Set $q_\delta(\alpha)=1$ when $\alpha$ is a multiple of $\sqrt{n}$ and $q_\delta(\alpha)=0$ otherwise. It turns out that $A_g q_\delta=q_\delta$, and so $\left\Vert q_\delta \right\Vert_0=\# S_{q_\delta} =\sqrt{n}$. Provided that $l<\sqrt{n}$, it turns out that condition CS2 for $y_\delta$ is satisfied if
\[
 \frac{2k}{N+1}+l<\sqrt{n}.
\]
Thus, the bigger the number of measurements is, the bigger $k$ and $l$ can be.

Assume now that $A_g$ contains only a subset of the Fourier basis of cardinality $m_g$ (as we shall do in the numerical simulations), and select a vector $q_\delta$ such that  $\left\Vert q_\delta \right\Vert_0=\# S_{q_\delta}=\sqrt{n}$ as before. Thus, CS2 is satisfied if
\[
 2k<N(\sqrt{n}-l)+\sqrt{n}-m_g,
\]
independently of the supports of the $\y_g^i$'s. For a fixed $l<\sqrt{n}$, the higher $N$ is, the bigger $k$  can be.
\end{example}

When considering wavelets and the Fourier basis, as in the numerical simulations of this paper, there is the added difficulty that the mutual coherence is not small, but in fact of order 1, which makes the uncertainty principle discussed above of no practical use. However, if we consider Haar wavelets and low frequency trigonometric polynomials, as in Section~\ref{sec:Applications-to-hybrid},  it is possible to improve this bound (a similar result for spikes and sinusoids is given in  \cite[Lemma 1]{1989-donoho-starck}). We now discuss this result.

We consider two dimensional signals in $\R^{n}$, where $n=d\times d$ and $d=2^J$ for some $J\in\N^*$. Let $A_f$ be the associated orthonormal basis consisting of 2D periodized Haar wavelets (see $\S$~\ref{subsec:appendix2}). Let $A_g$ consist of low frequency non-constant trigonometric polynomials. More precisely, for $L=1,\dots,d/2-1$ consider the following families of real sinusoids
\begin{equation}\label{eq:realsinusoids}
 \begin{aligned}
 & \chi_l^1(\alpha)=c_l^1\sin(2\pi l_1 \alpha_1/d)\sin(2\pi l_2 \alpha_2/d),\qquad l_1,l_2=1,\dots,L,\\
 & \chi_l^2(\alpha)=c_l^2\sin(2\pi l_1 \alpha_1/d)\cos(2\pi l_2 \alpha_2/d),\qquad l_1=1,\dots,L,\,l_2=0,\dots,L,\\
  & \chi_l^3(\alpha)=c_l^3\cos(2\pi l_1 \alpha_1/d)\sin(2\pi l_2 \alpha_2/d),\qquad l_1=0,\dots,L,\,l_2=1,\dots,L,\\
   & \chi_l^4(\alpha)=c_l^4\cos(2\pi l_1 \alpha_1/d)\cos(2\pi l_2 \alpha_2/d),\qquad l\in \{0,\dots,L\}^2\setminus\{(0,0)\},
\end{aligned}
\end{equation}
where $c_l^i>0$ are suitable normalization factors chosen so that $\norm{\chi^i_l}_2=1$. Let $A_g$ be the collection of all these real sinusoids. Assumption \eqref{ass:A_g} is satisfied. The number of atoms is given by $m_g=4L^2+4L$. The proof of the following result is given in Appendix~\ref{subsec:appendix2}: it is heavily based on the structure of Haar wavelets, and so it is expected that this result would fail with smoother wavelets.
\begin{prop}\label{prop:new_uncertainty}
Let $A_f$ and $A_g$ be the dictionaries of 2D Haar wavelets and low frequency non-constant real sinusoids discussed above, respectively. Assume that $L< 2^B$ for some $B\le J-2$. There exists $D>0$ such that
\[
 \norm{q}_0\ge  \sum_{j=1}^{J-B-1} (2^{J-j}-2L)^2,
\]
for every $q\in\R^n$ with $\norm{A_f q}_2>D$ and $\norm{\,^t\!A_g^\perp A_f q}_2\le 2/3$.
\end{prop}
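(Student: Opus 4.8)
The plan is to first reduce the statement to the case in which $A_f q$ is \emph{exactly} a low frequency trigonometric polynomial, and then to establish a sharp lower bound on the number of nonzero Haar coefficients of such a polynomial.

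For the reduction I would argue by contradiction and compactness, which is also what produces the threshold $D$. Set $\Sigma:=\sum_{j=1}^{J-B-1}(2^{J-j}-2L)^2$. If the claim failed, then for every $D>0$ there would be $q$ with $\norm{A_f q}_2>D$, $\norm{\,^t\!A_g^\perp A_f q}_2\le 2/3$ and $\norm{q}_0<\Sigma$; taking $D=n\to\infty$ and normalizing $s_n:=A_f q_n/\norm{A_f q_n}_2$ gives $\norm{s_n}_2=1$ and $\norm{\,^t\!A_g^\perp s_n}_2\le 2/(3D)\to 0$. By compactness of the unit sphere a subsequence converges to some $s_*$ with $\norm{s_*}_2=1$ and $\,^t\!A_g^\perp s_*=0$, i.e.\ $s_*\in\sp(A_g)\setminus\{0\}$ is a genuine mean-zero trigonometric polynomial of degree $\le L$ in each variable. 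Since rescaling does not change the Haar support, $\norm{\,^t\!A_f s_n}_0=\norm{q_n}_0<\Sigma$, and since $\norm{\cdot}_0$ is lower semicontinuous under coordinatewise convergence, the limit satisfies $\norm{\,^t\!A_f s_*}_0\le\Sigma-1<\Sigma$, contradicting the exact bound below.

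It then remains to show that any $s\in\sp(A_g)\setminus\{0\}$ has at least $\Sigma$ nonzero Haar coefficients, and here I would exploit the structure of Haar wavelets directly. Writing $s=\sum_{|l_1|,|l_2|\le L}\hat s(l)\,e^{2\pi\ii(l_1\alpha_1+l_2\alpha_2)/d}$, summing the relevant geometric series shows that the average of $s$ over the dyadic block of side $2^j$ indexed by $(k_1,k_2)$ equals $C_{j,l_1}C_{j,l_2}\,e^{2\pi\ii(l_1k_1+l_2k_2)/2^{J-j}}$ for an explicit factor $C_{j,l}$. Consequently each of the three families of 2D Haar detail coefficients at scale $j$, viewed as a function of the position $(k_1,k_2)\in\{0,\dots,2^{J-j}-1\}^2$, is itself a two-dimensional trigonometric polynomial with frequencies in $\{-L,\dots,L\}^2$; for instance the wavelet$\,\otimes\,$scaling coefficients equal $\sum_l\hat s(l)\,\tilde C_{j,l_1}C_{j,l_2}\,e^{2\pi\ii(l_1k_1+l_2k_2)/2^{J-j}}$ with $\tilde C_{j,l}=C_{j-1,l}(1-e^{2\pi\ii l/2^{J-j+1}})$. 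The hypothesis $L<2^B\le 2^{J-2}$ gives $2L<2^{J-j}$ for $j\le J-B-1$, so there is no aliasing on the grid of $2^{J-j}$ block indices, and moreover $\tilde C_{j,l}\ne 0$ for $0<|l|\le L$; since $s$ is non-constant, an appropriate one of the three families (chosen according to which frequency coordinate of $s$ does not vanish) is a \emph{nonzero} band-limited trigonometric polynomial at every scale $j\le J-B-1$.

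Finally I would invoke the discrete uncertainty principle for band-limited sequences: a nonzero sequence on $\mathbb{Z}_M$ whose DFT is supported in $\{-L,\dots,L\}$ is, after factoring out a unimodular phase, a polynomial of degree $\le 2L$ in $e^{2\pi\ii k/M}$, hence vanishes at most $2L$ times and has at least $M-2L$ nonzero entries. Applying this line by line in each direction shows that a nonzero band-limited sequence on $\{0,\dots,M-1\}^2$ has at least $(M-2L)^2$ nonzero entries; with $M=2^{J-j}$ this yields at least $(2^{J-j}-2L)^2$ nonzero detail coefficients at each scale $j=1,\dots,J-B-1$, and summing over these disjoint sets of coefficients gives $\norm{\,^t\!A_f s}_0\ge\Sigma$. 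I expect the main obstacle to be the perturbation present in the actual hypothesis, namely that the signal is only approximately band-limited ($\norm{\,^t\!A_g^\perp A_f q}_2\le 2/3$ rather than $=0$); this is exactly what the compactness step in the first paragraph is designed to absorb. The remaining computation is Haar-specific and would fail for smoother wavelets, whose block averages are not exactly band-limited.
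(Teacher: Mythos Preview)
Your proposal is correct, and the core combinatorial step --- computing the Haar detail coefficients of a low-frequency sinusoid, recognizing them as a band-limited polynomial on the $2^{J-j}\times 2^{J-j}$ grid, and counting zeros via the fundamental theorem of algebra --- is exactly the paper's Lemma~\ref{lem:new_uncertainty}. Your ``line by line'' 2D count should be phrased as the paper does it (first bound the number of $k_1$ for which a fixed nonzero Fourier coefficient in the $k_2$-variable survives, then for each such $k_1$ bound the zeros in $k_2$), but that is cosmetic.

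Where you genuinely diverge is in the reduction from the approximate hypothesis $\norm{\,^t\!A_g^\perp A_f q}_2\le 2/3$ to the exact band-limited case. You run a direct contradiction: normalize, pass to a limit on the sphere, and use lower semicontinuity of $\norm{\cdot}_0$ to push the sparsity bound through. The paper instead proves a reusable quantitative lemma (Lemma~\ref{lem:xi}): it introduces the ``quantitative $\ell^0$'' functional $\xi_p(q)=\max_{\alpha_1<\cdots<\alpha_p}\min_i|q(\alpha_i)|$, shows by compactness that $\xi_p(\,^t\!A_fA_g v)\ge C_\xi$ uniformly on the relevant sphere, and then perturbs explicitly, obtaining the concrete threshold $D=\tfrac{2}{3}(1+C_\xi^{-1})$. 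Your route is shorter and tailored to this statement; the paper's route yields a (semi-)explicit $D$ and, more importantly, the same lemma is reused verbatim in the proof of Proposition~\ref{prop:normalized_UP}, so it earns its keep as infrastructure. Both arguments ultimately rest on compactness of the unit sphere; neither gives an effective constant.
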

\begin{example}\label{exa:2}
In  Section~\ref{sec:Applications-to-hybrid} we shall set $J=7$ and $L=15$. In this case, the above inequality reads $\norm{q}_0\ge 1160$, which is sensibly better than the uncertainty principle based on the mutual coherence. Thus, arguing as in Remark~\ref{rem:N1}, in the single-measurement case, condition CS2 is satisfied provided that
\begin{equation}\label{eq:1160}
 2\norm{\y_f}_0+\norm{\y_g}_0<1160.
\end{equation}
It should be observed that in the numerical simulations we add also the constant matrix to the dictionary $A_g$. Even if this is not allowed by the above result (only 4 wavelets are needed to represent it), it seems not to create any issues for the reconstruction. This might be due to the following remark: most images are not constant.
\end{example}

\begin{rem}\label{rem:robustUP}
We conclude this part by observing that these uncertainty principles always take into account the worst case scenarios; namely,  for most vectors the minimum is not attained. Improved bounds (called \emph{robust} uncertainty principles) that hold for the overlwhelming mojority of vectors were  proved in \cite{2006-candes-romberg} for spikes and sinusoids. Moreover, it was proven that the separation problem can be succesfully solved in most cases by using $l_0$ minimization, provided that corresponding sparsity conditions are satisfied. These conditions are much weaker than the ones based on exact uncertainty principles. It would be interesting to investigate whether such extensions hold in our context too.

In view of this, while the uncertainty principle in Proposition~\ref{prop:new_uncertainty} does not contain the term corresponding to $S_q$, it seems reasonable to assume that for most vectors the quantity $\# S_q$ is not small when $\norm{q}_0$ is close to the minimum. This explains why CS2 is easily satisfied when $N$ is bigger, at least for most $q$.
\end{rem}

\subsection{Main result}

The main result of this section states that if the $\y_{g}^{i}$'s
constitute a complete set, then the signals $\tilde{f}$ and $\tilde{g}_{i}$
can be stably recovered from the knowledge of $h_{i}=\tilde{f}+\tilde{g}_{i}+n_{i}$
by minimizing \eqref{eq:min multi}.
\begin{thm}
\label{thm:main}Assume that \eqref{ass:A_g} holds true. Let $\beta,D,\rho_f,\rho_g,\eta>0$ and $N\in\N^{*}$ be such that $\epsilon:=\rho_f+\rho_g+\eta\le \beta/3$. Take 
$\tilde{y}_{f}\in\R^{m_{f}}$ and 
let $\{\y_{g}^{1},\dots,\y_{g}^{N}\}$ be $(\beta,D)$-complete. Assume that $\tilde{f},\tilde{g}_{i},n_{i},h_i\in\R^{n}$ satisfy \eqref{eq:hi noise}, \eqref{eq:noise}
and \eqref{eq:thm-assumption} and let $y_{f}\in\R^{m_{f}}$ and $y_{g}^{i}\in\R^{m_{g}}$ realize
the minimum of

\begin{equation}
\min_{y\in\R^{m_{f}+Nm_{g}}}\left\Vert y\right\Vert _{0}\quad\text{subject to $\bigl\Vert A_{f}y_{f}+A_{g}y_{g}^{i}-h_{i}\bigr\Vert_{2}\le\epsilon$, $i=1\dots,N$}.\label{eq:thm-min}
\end{equation}
Then for every $i=1,\dots,N$
\[
\bigl\Vert A_{f}y_{f}-\tilde{f}\bigr\Vert_{2}\le(3D+1) \epsilon,\quad\left\Vert A_{g}y_{g}^{i}-\tilde{g}_{i}\right\Vert _{2}\le (3D+2)\epsilon.
\]
\end{thm}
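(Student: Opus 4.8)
The plan is to compare the minimizer $(y_f,y_g^i)$ with the true coefficients $(\y_f,\y_g^i)$, which I first check is a competitor in \eqref{eq:thm-min}: by \eqref{eq:hi noise}, \eqref{eq:noise}, \eqref{eq:thm-assumption} and the triangle inequality, $\norm{A_f\y_f+A_g\y_g^i-h_i}_2\le\rho_f+\rho_g+\eta=\epsilon$, so $(\y_f,\y_g^i)$ is feasible and minimality of $y$ yields $\norm{y_f}_0+\sum_i\norm{y_g^i}_0\le\norm{\y_f}_0+\sum_i\norm{\y_g^i}_0$. Writing $q=y_f-\y_f$ and $p^i=y_g^i-\y_g^i$ and subtracting the two constraint inequalities gives $\norm{A_f q+A_g p^i}_2\le2\epsilon$ for every $i$. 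Applying the contraction $^t\!A_g^\perp$ and using $^t\!A_g^\perp A_g=0$ gives $\norm{^t\!A_g^\perp A_f q}_2\le2\epsilon$, while applying $^t\!A_g$ and using $^t\!A_g A_g=I$ gives the pointwise bound $\abs{p^i(\alpha)+(^t\!A_g A_f q)(\alpha)}\le2\epsilon$.

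Next I would reduce the whole statement to the single estimate $\norm{A_f q}_2\le 3D\epsilon$: granted this, the first conclusion follows from $\norm{A_f y_f-\tilde f}_2\le\norm{A_f q}_2+\rho_f$, and the second from $\norm{p^i}_2\le2\epsilon+\norm{^t\!A_g A_f q}_2\le2\epsilon+\norm{A_f q}_2$ together with $\norm{A_g y_g^i-\tilde g_i}_2\le\norm{p^i}_2+\rho_g$, after elementary bookkeeping using $\rho_f,\rho_g,\eta\le\epsilon$. To prove $\norm{A_f q}_2\le 3D\epsilon$ I argue by contradiction: if $\norm{A_f q}_2>3D\epsilon$, then $q':=q/(3\epsilon)$ satisfies $\norm{A_f q'}_2>D$ and, by the projection bound above, $\norm{^t\!A_g^\perp A_f q'}_2\le 2\epsilon/(3\epsilon)=2/3$, so $q'$ meets exactly the hypotheses of CS2 (note $\supp q'=\supp q$, and $S_{q'}=\{\alpha:\abs{(^t\!A_g A_f q)(\alpha)}\ge3\epsilon\}$).

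The core is then to convert the two set-difference terms in \eqref{eq:CS2} into support information about the minimizer. On $\supp q\setminus\supp\y_f$ we have $y_f(\alpha)=q(\alpha)\neq0$, so that term is bounded by $\#(\supp y_f\setminus\supp\y_f)$; and for $\alpha\in S_{q'}$, combining $\abs{(^t\!A_g A_f q)(\alpha)}\ge3\epsilon$ with the pointwise bound forces $\abs{p^i(\alpha)}\ge\epsilon$, so on $S_{q'}\setminus\supp\y_g^i$ we get $y_g^i(\alpha)\neq0$ and that term is bounded by $\#(\supp y_g^i\setminus\supp\y_g^i)$. Substituting these into CS2 and combining with the minimality inequality, the $\#\supp\y_f$ contributions cancel and one is left with the \emph{strict} inequality $\sum_i\#(\supp\y_g^i\setminus\supp y_g^i)>\#\bigcup_i\supp\y_g^i$. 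A contradiction is reached once I prove the reverse inequality, i.e.\ that at each atom $\alpha$ at most one index $i$ can be \emph{destroyed} (have $\y_g^i(\alpha)\neq0$ but $y_g^i(\alpha)=0$). This is precisely where CS1 enters: two destroyed indices $i\neq j$ at the same $\alpha$ have $\y_g^i(\alpha),\y_g^j(\alpha)\neq0$, and since $p^i(\alpha)=-\y_g^i(\alpha)$ and $p^j(\alpha)=-\y_g^j(\alpha)$ both lie within $2\epsilon$ of the common value $-(^t\!A_g A_f q)(\alpha)$, their coefficients are forced close together; the smallness hypothesis $\epsilon\le\beta/3$ is calibrated so that this closeness violates CS1 and hence $i=j$.

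I expect this last step — extracting support information from the analysis vector $^t\!A_g A_f q$ and, above all, using CS1 to cap at one the number of coefficients the minimizer can annihilate at each atom — to be the main obstacle; the projections, the reduction to $\norm{A_f q}_2\le 3D\epsilon$, and the final triangle inequalities are routine. The delicate point is tracking the constants in the destroyed-index estimate so that the gap between $\beta$ and the pointwise perturbation of $p^i$ stays strictly positive, which is exactly what pins down the threshold $\epsilon\le\beta/3$.
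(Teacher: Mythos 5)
Your proposal is, in substance, the paper's own proof: your $q$ and $p^i$ are the paper's $e_f$ and $e_g^i$, your pointwise bound $\abs{p^i(\alpha)+(^t\!A_gA_fq)(\alpha)}\le2\epsilon$ is the paper's bound $\norm{r^i}_\infty\le2\epsilon$, your conversion of the two set-difference terms of \eqref{eq:CS2} into support information about the minimizer reproduces \eqref{eq:fifth} (equivalently Lemma~\ref{lem:properties} part 4), your ``at most one destroyed index per atom'' step via CS1 is exactly \eqref{eq:sixth}, and the rescaling $q'=q/(3\epsilon)$ followed by CS2 is identical --- you phrase it as a contradiction where the paper uses the contrapositive, which is immaterial.

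Two points of detail, one of which is a genuine (if small) gap. First, your route to the second estimate does not deliver the stated constant: from $\norm{A_gy_g^i-\tilde g_i}_2\le\norm{p^i}_2+\rho_g$ and $\norm{p^i}_2\le2\epsilon+\norm{A_fq}_2\le(3D+2)\epsilon$ you obtain $(3D+2)\epsilon+\rho_g$, which is only $(3D+3)\epsilon$ in general: the $2\epsilon$ already charges $\rho_g$ once (through the feasibility of $\y$), and your final triangle inequality pays it a second time. The paper avoids this by estimating the signal rather than the coefficients,
\[
A_gy_g^i-\tilde g_i=(A_fy_f+A_gy_g^i-h_i)+(\tilde f-A_fy_f)+n_i,
\]
whose terms are bounded by $\epsilon+(3D\epsilon+\rho_f)+\eta\le(3D+2)\epsilon$, using $\rho_f+\eta\le\epsilon$; a one-line change, but needed to get the theorem as stated. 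Second, your claim that $\epsilon\le\beta/3$ is ``exactly'' the calibration the CS1 step requires is optimistic: two destroyed indices at the same atom have coefficients within $2\epsilon+2\epsilon=4\epsilon$ of each other, so CS1 forces $i=j$ only when $\beta\ge4\epsilon$, whereas only $\beta\ge3\epsilon$ is assumed. This is not a deviation from the paper --- its proof of \eqref{eq:sixth} rests on the same $2\epsilon$ perturbation bounds and has the same tension --- but, strictly speaking, both your argument and the paper's want $\epsilon\le\beta/4$ unless one of the two perturbation estimates is sharpened.
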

Thanks to this result, the reconstruction can be performed by minimizing
\eqref{eq:thm-min} and then taking $\tilde{f}\approx A_{f}y_{f}$
and $\tilde{g}_{i}\approx A_{g}y_{g}^{i}$. The practical details
of such minimization will be discussed in Section~\ref{sec:Numerical-implementation}.

In the proof of this theorem we shall make use of the following properties,
whose proofs are immediate.
\begin{lem}
\label{lem:properties}Let $A\in\R^{n\times m}$ constitute an orthonormal set of $\R^n$ and let $A^\perp\in\R^{n\times(n-m)}$ complete $A$ to an orthobasis of $\R^n$. The following properties hold true.
\begin{enumerate}
\item For every $v\in\R^{m}$ we have $\left\Vert Av\right\Vert _{2}=\left\Vert v\right\Vert _{2}$.
\item For every $v\in\R^{n}$ we have $\left\Vert ^t\!Av\right\Vert _{2}\le \left\Vert v\right\Vert _{2}$.
\item We have $^t\! A A=I$ and $^t\! A^\perp A =0$.

\item For every $a,b\in\R^{m}$ we have
\[
\left\Vert a+b\right\Vert _{0}=\left\Vert a\right\Vert _{0}+\#(\supp b\setminus\supp a)-\#\{\alpha:a(\alpha)=-b(\alpha)\neq0\}.
\]
\end{enumerate}
\end{lem}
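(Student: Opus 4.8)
The plan is to prove the four items in the order $(3)\Rightarrow(1),(2)\Rightarrow(4)$, since the Gram-type identity in part~(3) drives the two norm statements, while part~(4) is a self-contained combinatorial count. Throughout I write $a_1,\dots,a_m$ for the columns of $A$ and $a_1^\perp,\dots,a_{n-m}^\perp$ for those of $A^\perp$; the hypothesis that $[A,A^\perp]$ is an orthonormal basis of $\R^n$ means $(a_i,a_j)_2=\delta_{ij}$, $(a_i^\perp,a_j)_2=0$, and the $n$ vectors together form an orthonormal basis.

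For part~(3) I would read off matrix entries directly: the $(i,j)$ entry of ${}^t\!AA$ is $(a_i,a_j)_2=\delta_{ij}$, so ${}^t\!AA=I_m$, and the $(i,j)$ entry of ${}^t\!A^\perp A$ is $(a_i^\perp,a_j)_2=0$, so ${}^t\!A^\perp A=0$. Part~(1) is then immediate from $\left\Vert Av\right\Vert_2^2=(Av,Av)_2={}^t\!v\,({}^t\!AA)\,v={}^t\!v\,v=\left\Vert v\right\Vert_2^2$. For part~(2) I would invoke Parseval's identity in the orthonormal basis $[A,A^\perp]$: since the coefficient of $a_i$ in the expansion of $v\in\R^n$ is $(v,a_i)_2=({}^t\!Av)(i)$, and likewise for the $a_j^\perp$, one has $\left\Vert v\right\Vert_2^2=\left\Vert {}^t\!Av\right\Vert_2^2+\left\Vert {}^t\!A^\perp v\right\Vert_2^2$, whence $\left\Vert {}^t\!Av\right\Vert_2\le\left\Vert v\right\Vert_2$. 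The inequality (rather than equality) is precisely the slack that appears because $A$ is only an orthonormal set and not a full basis.

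For part~(4) the plan is to partition the index set $\{1,\dots,m\}$ according to the vanishing pattern of $a(\alpha)$, $b(\alpha)$ and $a(\alpha)+b(\alpha)$, and then to count contributions to $\left\Vert a+b\right\Vert_0$. An index $\alpha$ lies in $\supp(a+b)$ exactly when $a(\alpha)+b(\alpha)\neq0$. I would split such indices into those with $a(\alpha)=0$, which are precisely the elements of $\supp b\setminus\supp a$ and account for the term $\#(\supp b\setminus\supp a)$, and those with $a(\alpha)\neq0$. Among indices with $a(\alpha)\neq0$, the ones that drop out of $\supp(a+b)$ are exactly those for which $a(\alpha)+b(\alpha)=0$, i.e.\ $a(\alpha)=-b(\alpha)\neq0$; hence this group contributes $\left\Vert a\right\Vert_0-\#\{\alpha:a(\alpha)=-b(\alpha)\neq0\}$. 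Adding the two groups yields the claimed identity.

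None of these steps is a genuine obstacle, and the paper correctly flags the proofs as immediate. The only place that rewards care is part~(4): one must verify that the classification assigns each index to exactly one category and, in particular, that an index with $a(\alpha)\neq0$ can leave $\supp(a+b)$ only through the exact cancellation $b(\alpha)=-a(\alpha)$, which is the source of the subtracted correction term. Once the mutually exclusive cases (both coordinates zero; only $a$ nonzero and surviving; only $a$ nonzero and cancelling; $\alpha\in\supp b\setminus\supp a$) are laid out, the identity is forced.
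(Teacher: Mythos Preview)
Your proof is correct and complete. The paper itself omits the proof entirely, stating only that the properties are immediate; your write-up supplies exactly the standard verifications one would expect, so there is no alternative approach to compare against. The only imprecision is in your closing parenthetical list of cases for part~(4), where ``only $a$ nonzero and cancelling'' is a slip (if $b(\alpha)=0$ and $a(\alpha)\neq0$ there is no cancellation); you clearly mean ``$a(\alpha)\neq0$ and surviving'' versus ``$a(\alpha)\neq0$ and cancelling,'' which is what your main argument in the preceding paragraph actually establishes.
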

We are now ready to prove Theorem~\ref{thm:main}.
\begin{proof}[Proof of Theorem~\ref{thm:main}]
Write $f:=A_{f}y_{f}$,
$g_{i}:=A_{g}y_{g}^{i}$, $e_{f}:=y_{f}-\y_{f}$, $e_{g}^{i}:=y_{g}^{i}-\y_{g}^{i}$,
 $e_{g}:=-^t\!A_{g}A_{f}e_{f}$ and $r^{i}:=e_{g}^{i}-e_{g}$. Since $y_f$ and $y_{g}^{i}$ satisfy the constraint in \eqref{eq:thm-min} we have
\[
\begin{split}
 \norm{A_fy_f-A_f\y_f+A_gy^i_g-A_g\y^i_g}_2&\le \norm{(A_fy_f+A_gy^i_g-h_i)+(h_i-A_g\y^i_g-A_f\y_f)}_2\\
 & \le \epsilon + \norm{\tilde{f}-A_f\y_f}_2 +\norm{\tilde{g_i}-A_g\y_g^i}_2 +\norm{n_i}_2\\
 &\le 2\epsilon
\end{split}
\]
where the last inequality follows from \eqref{eq:noise} and \eqref{eq:thm-assumption}. As a consequence, since $r^i=\,^t\!A_g(A_fy_f-A_f\y_f+A_gy^i_g-A_g\y^i_g)$ (Lemma~\ref{lem:properties} part 3), by Lemma~\ref{lem:properties} part 2 we obtain $\norm{r^i}_2\le 2\epsilon$, whence
\begin{equation}
 \norm{r^i}_\infty\le 2\epsilon. \label{eq:norm_ri}
\end{equation}
Moreover, by Lemma~\ref{lem:properties} parts 2 and 3 we obtain
\begin{equation}
 \norm{^t\!A_g^\perp A_f e_f}_2= \norm{^t\!A_g^\perp (A_f y_f-A_f \y_f +A_gy^i_g-A_g\y^i_g)}_2\le 2\epsilon.\label{eq:bound_A_perp}
\end{equation}

Since $y_{f}$ and $y_{g}^{i}$ realize the minimum of \eqref{eq:thm-min}
we have  $\norm{y}_0\le \norm{\y}_0$ or, equivalently,
\[
(\left\Vert y_{f}\right\Vert _{0}-\left\Vert \y_{f}\right\Vert _{0})+\sum_{i=1}^{N}(\left\Vert y_{g}^{i}\right\Vert_0 -\left\Vert \y_{g}^{i}\right\Vert_0 )\le 0.
\]
where we have set $\y=\,^{t}[^{t}\y_{f},\,^{t}\y_{g}^{1},\dots,\,^{t}\y_{g}^{N}]$. Thus, since $y_{f}=\y_{f}+e_{f}$ and $y_{g}^{i}=\y_{g}^{i}+e_{g}^{i}$,
Lemma~\ref{lem:properties} part 4 yields
\begin{multline}
\#(\supp e_{f}\setminus\supp\y_{f})-\#\{\alpha:\y_{f}(\alpha)=-e_{f}(\alpha)\neq0\}\\
+\sum_{i=1}^{N}\#(\supp e_{g}^{i}\setminus\supp\y_{g}^{i})-\sum_{i=1}^{N}\#\{\alpha:\y_{g}^{i}(\alpha)=-e_{g}^{i}(\alpha)\neq0\}\le 0.\label{eq:forth}
\end{multline}

Observe now that by \eqref{eq:norm_ri} we have
\begin{equation}
\begin{aligned} & \#\{\alpha:\y_{f}(\alpha)=-e_{f}(\alpha)\neq0\}\le\left\Vert \y_{f}\right\Vert _{0},\\
 & \supp e_{g}^{i}=\supp(e_{g}+r^{i})\supseteq\{\alpha:|e_{g}(\alpha)|\ge3\epsilon\}.
\end{aligned}
\label{eq:fifth}
\end{equation}
Since $3\epsilon\le\beta$ and $\{\y_{g}^{1},\dots,\y_{g}^{N}\}$
is $(\beta,D)$-complete, by  \eqref{eq:norm_ri} and Definition~\ref{def:complete} (condition
CS1), we have
\begin{equation}
\sum_{i=1}^{N}\#\{\alpha:\y_{g}^{i}(\alpha)=-e_{g}^{i}(\alpha)\neq0\}
=\#\bigcup_{i=1}^{N}\{\alpha:\y_{g}^{i}(\alpha)=-e_{g}^{i}(\alpha)\neq0\}\le\#\bigcup_{i=1}^{N}\supp\y_{g}^{i}.\label{eq:sixth}
\end{equation}
Inserting \eqref{eq:fifth} and \eqref{eq:sixth} into \eqref{eq:forth}
gives
\[
\#(\supp e_{f}\!\setminus\supp\y_{f})+\!\sum_{i=1}^{N}\!\#(\{\alpha\!:\!|(^t\!A_{g}A_{f}e_{f})(\alpha)|\!\ge\!3\epsilon\}\!\setminus\supp\y_{g}^{i}) 
\!\le\! \left\Vert \y_{f}\right\Vert _{0}\!+\#\bigcup_{i=1}^{N}\!\supp\y_{g}^{i}.
\]
Set $q=e_f/(3\epsilon)$. Since $\supp e_f=\supp q$ we have
\[
\#(\supp q\setminus\supp\y_{f})+\sum_{i=1}^{N}\#(S_q\setminus\supp\y_{g}^{i}) 
\le \left\Vert \y_{f}\right\Vert _{0}+\#\bigcup_{i=1}^{N}\supp\y_{g}^{i}.
\]
where $S_q=\{\alpha:|(^t\!A_{g}A_f q)(\alpha)|\ge 1\}$. Moreover, by \eqref{eq:bound_A_perp} we have $\norm{^t\!A_g^\perp A_f q}_2\le 2/3$. Thus, since $\{\y_{g}^{1},\dots,\y_{g}^{N}\}$ is $(\beta,D)$-complete,
by Definition~\ref{def:complete} (condition CS2) we obtain that $\left\Vert A_f e_f\right\Vert _{2}\le3D\epsilon$. As a result, by construction of $e_f$, \eqref{eq:thm-assumption} and the triangle inequality we have
\begin{equation}\label{eq:f-ftilde}
  \bigl\Vert A_f y_f -\tilde{f}\bigr\Vert _{2}\le3D\epsilon +\rho_f \le (3D+1)\epsilon.
\end{equation}
This proves the desired bound for $f$. It remains to prove the
corresponding estimate for $g_i=A_{g}y_{g}^i$. In order to do this, observe
that by \eqref{eq:thm-assumption}, \eqref{eq:f-ftilde} and the triangle inequality we obtain
\[
\begin{split}
\bigl\Vert g_i-\tilde{g}_i\bigr\Vert_{2}&\le\bigl\Vert (f+g_i-h_i) + (\tilde{f}-f)+ n_i\bigr\Vert_{2}\le \epsilon+ 3D\epsilon+\rho_f+\eta\le (3D+2)\epsilon.
\end{split}
\]
This concludes the proof.
\end{proof}

\section{\label{sec:Numerical-implementation}Numerical implementation}

\subsection{Orthogonal Matching Pursuit}

The simplest available algorithm for the minimization of problems
of the type
\[
\min_{y\in\R^{m}}\left\Vert y\right\Vert _{0}\quad\text{subject to \ensuremath{\left\Vert Ay-f\right\Vert _{2}\le\epsilon}},
\]
for $f\in\R^{n}$ and $A\in\R^{n\times m}$, is the Orthogonal Matching
Pursuit (OMP) \cite{1993-mallat-zhang,2009-donoho-review}. This algorithm
belongs to a wider class of methods called greedy algorithms, in which
the set of the used atoms of $A$ is increased step by step. In OMP,
the best coefficients for the atoms are recomputed at each iteration,
which makes it more efficient compared to the standard matching pursuit.
Greedy algorithms, and hence OMP, are not a priori equivalent to the minimization of the above problem, and the convergence to a minimizer is not always guaranteed, but they have been proved to perform well in most cases \cite{2004-tropp-greed}.  The reader is referred
to \cite{2009-donoho-review} for more details on this topic.

The adaptation of OMP to the problem of our interest is quite straightforward.
By Theorem~\ref{thm:main}, we need to minimize \eqref{eq:thm-min}, i.e.
\[
\min_{y\in\R^{m_{f}+Nm_{g}}}\left\Vert y\right\Vert _{0}\quad\text{subject to \ensuremath{\bigl\Vert A_{f}y_{f}+A_{g}y_{g}^{i}-h_{i}\bigr\Vert_{2}\le\epsilon,\quad i=1,\dots,N,}}
\]
given $N$ signals $h_{i}\in\R^{n}$ and two dictionaries $A_{f}\in\R^{n\times m_{f}}$
and $A_{g}\in\R^{n\times m_{g}}$. Setting
\[
A=\begin{bmatrix}A_{f} & A_{g} & 0 & \cdots & 0\\
A_{f} & 0 & A_{g} & \ddots & \vdots\\
\vdots & \vdots & \ddots & \ddots & 0\\
A_{f} & 0 & \cdots & 0 & A_{g}
\end{bmatrix},\; y=\begin{bmatrix}y_{f}\\
y_{g}^{1}\\
y_{g}^{2}\\
\vdots\\
y_{g}^{N}
\end{bmatrix}\;\text{and}\; h=\begin{bmatrix}h_{1}\\
h_{2}\\
\vdots\\
h_{N}
\end{bmatrix},
\]
the above problem is equivalent to
\[
\min_{y\in\R^{m_{f}+Nm_{g}}}\left\Vert y\right\Vert _{0}\quad\text{subject to \ensuremath{\bigl\Vert Ay-h\bigr\Vert_{2}\le\sqrt{N}\epsilon,}}
\]
for which the OMP can be applied directly, as discussed above. As before, OMP is not guaranteed to converge to a minimizer of this functional. In other words, even though Theorem~\ref{thm:main} gives (almost) exact recovery via the minimization of \eqref{eq:thm-min}, OMP may not provide a faithful reconstruction. However, the numerical simulations (Section~\ref{sec:Numerical-implementation}) suggest that in practice a correct reconstruction is always achieved via OMP. It would be interesting to consider this issue from the theoretical point of view, but this goes beyond the scope of this work.

Let us also mention other alternatives for the minimization of these problems, such as Basis Pursuit \cite{2001-basispursuit}, Block-Coordinate-Relaxation \cite{1998-bruce-etal}, Iterative Hard Thresholding \cite{2008-blumensath-davies} and Stagewise OMP \cite{2012-donoho-etal}. In particular, in Basis Pursuit the $\ell^0$-penalization term is substituted by an $\ell^1$ term, which makes the functional convex: the minimization can be easily achieved with standard tools. This would complicate the corresponding reconstruction result (Theorem~\ref{thm:main}) and consequently the assumptions on the original signals (Definition~\ref{def:complete}). Achieving this would be a natural follow-up of this work.

The Block-Coordinate-Relaxation method was adapted to the minimization of the functional related to the separation problem with $N=1$ measurement  \cite{2004-starck-MCA}. This method, sometimes referred as MCA, is a combination of Basis Pursuit (the $\ell^1$ norm is used instead of the $\ell^0$ norm) and of a two-step iterative procedure where the two components of the signal are minimized separately. Moreover, the minimization is expressed directly in terms of the signals $A_f y_f$ and $A_g y_g$, without the need of storing the full dictionary matrices, which may take a lot of memory space. It would be very interesting to see whether this method can be generalized to the multi-measurement case considered in this paper.

\subsection{\label{sub:Dictionaries-for-image}Dictionaries for image content}

Let us now discuss what choices may be done for the dictionaries $A_{f}$
and $A_{g}$ in the context we are interested in, namely hybrid imaging
inverse problems. As we shall see in Section~\ref{sec:Applications-to-hybrid},
in such problems the signal $\tilde{f}$ will typically represent
(the logarithm of) a constitutive parameter of the tissue under investigation,
like for instance the electric permittivity, electric conductivity
or the optical absorption. As such, the image $\tilde{f}$ can be
supposed to be piecewise smooth: the discontinuities are usually the
inclusions we would like to determine. On the other hand, the signals
$\tilde{g}_{i}$'s usually represent the injected fields, such as
the electric field or the light intensity, and are the solutions to
certain PDE. As such, they enjoy higher regularity properties, and
the images $\tilde{g}_{i}$'s can be supposed to be smooth.

These different features represent the foundation of the signal separation
method discussed in the previous section. In order to exploit this
diversity it is necessary to choose suitable dictionaries $A_{f}$
and $A_{g}$, with respect to which $\tilde{f}$ and the $\tilde{g}_{i}$'s
have sparse representations, respectively.

As far as $A_{f}$ is concerned, wavelets have been widely used to
sparsely represent piecewise smooth images \cite{2005-stark-elad-donoho}.
In particular, Haar wavelets will be used in this work: the choice is motivated by Proposition~\ref{prop:new_uncertainty}. It is worth noting that in recent years a large
number of new multi-dimensional transforms have been introduced in
order to capture the directional features of two-dimensional images
\cite{2010-rubinstein}. Among the most known, there are curvelets
\cite{2004-curvelets}, ridgelets \cite{1999-ridgelets} and
shearlets \cite{2005-labate-lim-kut-weiss,2011-kutyniok-lim,2012-kittipoom-kutyniok-lim}. The use of these transforms may
give better results, but a deep investigation of the best choice for
the dictionaries goes beyond the scope of this paper. Thus, we have
decided to make the most classical choice of wavelets, since, even though possibly not optimal, it is sufficient
to properly illustrate the disjoint sparsity signal separation method.

The situation is simpler for the choice of $A_{g}$. Indeed, a good
representation of smooth functions may be obtained by choosing low
frequency trigonometric polynomials. This is a simple consequence
of the correspondence between the smoothness of a function and the
decay of its Fourier transform. This represents our choice for $A_{g}$
in this paper.

These dictionaries purely represent a general guideline for the choices
of $A_{f}$ and $A_{g}$. Additional information on the particular
physical model may be used to select dictionaries more adapted to
the features of the images under consideration.

\section{\label{sec:Applications-to-hybrid}Applications to hybrid inverse
problems}

\subsection{\label{sub:Introduction-to-hybrid}Introduction}

We have seen in the introduction that a hybrid problem usually involves
two steps. First, internal functionals are measured inside the domain
and, second, from their knowledge the unknown coefficients of the
PDE have to be reconstructed. These internal data are linear or quadratic
functionals of the unknowns and of the solutions of the direct problem.
Let us mention some examples.
\begin{itemize}
\item In photoacoustic tomography \cite{2010-ammari-bossy, 2011-ammari-bossy, bal2010inverse,bal-kui-2011} the
internal data take the form
\[
H(x)=\Gamma(x)\mu(x)u(x),\qquad x\in\Omega,
\]
where $\Gamma$ is the Grüneisen parameter, $\mu$ is the optical
absorption and $u$ is the light intensity. The main unknown of the
problem is $\mu$. The photoacoustic image $H$ gives only qualitative
information on the medium, as $\mu$ is multiplied by $\Gamma$ and
$u$. The problem of quantitative photoacoustics is the reconstruction
of $\mu$ from $H$. Under the diffusion approximation, the light
intensity $u$ solves
\begin{equation}
-\div(D\nabla u)+\mu u=0\quad\text{in }\Omega,\label{eq:pat-diffusive}
\end{equation}
where $D$ is the diffusion parameter. This equation should be augmented
with suitable boundary conditions, such as of Dirichlet or Robin type.
\item In thermoacoustic tomography \cite{ammari2012quantitative} the internal
data take the form
\[
H(x)=\sigma(x)|u(x)|^{2},\qquad x\in\Omega,
\]
where $\sigma$ is the unknown conductivity and $u$ is the electric
field. The problem of quantitative thermoacoustics is the reconstruction
of $\sigma$ from $H$. In the scalar approximation, $u$ is the solution
of the Helmholtz equation
\[
\Delta u+(\omega^{2}+i\omega\sigma)u=0\quad\text{in }\Omega,
\]
where $\omega$ is the angular frequency. As before, this equation
should be augmented with suitable boundary conditions.
\item In microwave imaging by ultrasound deformation \cite{cap2011,alberti2013multiple}
the internal data take the form
\[
H(x)=\epsilon(x)u(x){}^{2},\qquad x\in\Omega,
\]
where $\epsilon$ is the unknown electric permittivity and $u$ is
the electric field. The problem is now to reconstruct $\epsilon$
from $H$. In the scalar approximation, $u$ is the solution of the
Helmholtz equation
\[
\Delta u+\omega^{2}\epsilon u=0\quad\text{in }\Omega,
\]
augmented with suitable boundary conditions.
\item In ultrasound modulated diffuse optical tomography 
\cite{2014a-ammari-seppecher, 2014b-ammari-seppecher, 2014c-ammari-seppecher} the internal data are $\div (u^2 \nabla \mu)$, 
where $u$ solves (\ref{eq:pat-diffusive}) and $\mu$ is the optical absorption.

\end{itemize}
In all the above examples, the measurement $H$ is the product of
the desired unknown and other quantities. Thus, the problem is extracting
the desired unknown from $H$. PDE techniques are usually used to
solve the problem, but, as discussed in the introduction, have several
drawbacks. 

All the above problems consist in the determination of two functions
from the knowledge of their product. Taking logarithms, in a multi-measurement
setting this is equivalent to finding $f(x)$ and $g_{i}(x)$ from
the knowledge of their sums
\[
h(x)=f(x)+g_{i}(x),\qquad x\in\Omega.
\]
The disjoint sparsity signal separation method can be applied since
$f$ and the $g_{i}$'s have different level of smoothness, and so
can be sparsely represented with respect to different dictionaries
(see $\S$~\ref{sub:Dictionaries-for-image}).

In particular, Theorem~\ref{thm:main} guarantees unique and stable
reconstruction of the the coefficients, provided that we can construct
many incoherent $u_{i}$ of the above problems (by changing the boundary
values), so that the corresponding ${g}_{i}$'s give a complete
set of measurements, according to Definition~\ref{def:complete}.
As we shall see below in the numerical simulations, the conclusion of Theorem~\ref{thm:main} is verified in several cases, choosing different boundary values. Unfortunately, it is not possible to check the conditions of  Definition~\ref{def:complete} numerically, as this would require an infinite number of computations. These conditions are used to heuristically inform the choice of incoherent illuminations. On the other hand, at the current state, we are unable to give a general strategy to build boundary values so that the corresponding solutions will satisfy the hypotheses of Definition~\ref{def:complete}: this represents a very interesting open problem regarding
the interplay of the PDE theory with the disjoint sparsity approach (see Section~\ref{sec:Conclusions}).

It is worth mentioning that similar assumptions of incoherence, usually
in terms of linear independence of gradients of solutions, are often
necessary for the PDE-based reconstruction methods (see, e.g., \cite{2009-cap-etal,bal2010inverse,cap2011,ammari2012quantitative,alberti2013multiple,bal2012inversediffusion,2014-albertigs,2013-albertigs,2014-scherzer-naetar}
and references therein).

As an example, in the rest of this section we shall apply the method
to the reconstruction in quantitative photoacoustic tomography. All
the other modalities mentioned above can be treated with minor modifications.

\subsection{\label{sub:Quantitative-photoacoustic-tomog-gamma=00003D1}Quantitative
photoacoustic tomography, the case $\Gamma=1$}

In photoacoustic tomography, the object under investigation is illuminated
with light radiation, whose absorption causes local heating of the
medium. The temperature rise creates an expansion of the tissue, thereby
producing an acoustic wave, that can be measured on the boundary of
the domain. The first step of this hybrid modality consists in the
recovery of the amount of absorbed radiation by inverting the wave
equation, from the knowledge of the boundary values. This problem
has attracted much attention in the last years: the reader is referred
to \cite{2011-kuchment-kunyansky} and to the references therein for
more details. In this paper, we shall suppose that the first step
has been performed, and that we have access to the amount of absorbed
radiation
\[
H(x)=\Gamma(x)\mu(x)u(x),\qquad x\in\Omega,
\]
where $\Gamma$ is the Grüneisen parameter, $\mu$ is the optical
absorption and $u$ is the light intensity. The problem of quantitative
photoacoustic tomography consists in the reconstruction of $\mu$
from the knowledge of $H$. Much research has been done on this over
the last years, see e.g.\ \cite{2006-cox-arridge,2009-razansky,2009-cox,2010-ammari-bossy,2011-cox-tarvainen-arridge,2011-shao-cox-zemp,2012-gao-osher-zhao,2013-ren-gao-zhao,2014-pulkkinen-cox-arridge}
and references therein. Sparse representations were first used in
\cite{2009-razansky}. Recently, one-step methods have been introduced to perform both steps at the same time \cite{2015-haltmeier-etal,2015-ding-etal}: it would certainly be interesting to see whether the signal separation approach can be employed in a one-step reconstruction method. 

Light propagation may be modeled by a radiative transport equation
or, when the scattering coefficient is large and the absorption is
small, by its diffusion approximation \eqref{eq:pat-diffusive}. We
consider here the simplest case when $\Gamma=1$, the general case
is discussed later in $\S$~\ref{sub:Quantitative-photo-acoustic-tomo}.
By using multiple measurements, $\mu$ can be recovered both in the
transport regime \cite{2010-bal-jollivet-jugnon} and in the diffusive
regime \cite{bal2010inverse}, under suitable assumptions on the solutions.

We now describe how to apply the disjoint sparsity approach to this
problem. If compared to the PDE approach, the separation of $\mu$ and $u$ does not require the use
of a PDE, and so no specific model of light propagation has to be assumed for the inverse
problem. Moreover, no a priori knowledge of relevant boundary conditions (which may be unrealistic) is required, in contrast to the PDE approach. These aspects should make this approach more feasible. For simplicity, we shall construct the solutions $u_{i}$
via \eqref{eq:pat-diffusive} with $D=1$ and Dirichlet boundary values,
namely
\begin{equation}
\left\{ \begin{array}{l}
-\Delta u_{i}+\mu u_{i}=0\quad\text{in }\Omega,\\
u_{i}=\phi_{i}\quad\text{on }\bo.
\end{array}\right.\label{eq:qpat-gamma=00003D1-pde}
\end{equation}
However, this equation will \underline{not} be used for the inversion.

The joint sparsity method will be applied as follows. Let $\tilde{\mu}$
denote the true absorption. After constructing $N$ solutions $\tilde{u}_{1},\dots,\tilde{u}_{N}$
to the above equation, the quantities 
\[
H_{i}(x)=\tilde{\mu}(x)\tilde{u}_{i}(x),\qquad x\in\Omega,
\]
are measured. Taking logarithms and adding white Gaussian noise $n_{i}$,
we obtain
\begin{equation}
h_{i}=\log\tilde{\mu}+\log\tilde{u}_{i}+n_{i},\qquad i=1,\dots N.\label{eq:qpat-ni}
\end{equation}
The reconstruction of $\tilde{\mu}$ from the knowledge of the $h_{i}$'s
exactly corresponds to the problem discussed in Section~\ref{sec:Disjoint-sparsity-for}.
The method based on Theorem~\ref{thm:main} and whose numerical implementation
is described in Section~\ref{sec:Numerical-implementation} will
be used for the reconstruction.

In all the examples, we shall consider the two-dimensional domain
$[0,1]^{2}$ with $128\times128$ pixels. As far as the choice for
the dictionaries is concerned, we let $A_{f}$ consist of the orthonormal
basis of Haar wavelets (as in $\S$~\ref{subsec:appendix2}) and let $A_{g}$ consist of 961 low frequency
trigonometric polynomials (as in \eqref{eq:realsinusoids}, including the constant matrix), periodic over $[0,1]^{2}$.

The choice for $A_{g}$ forces to choose periodic boundary conditions,
and so we set
\begin{equation}
\begin{aligned} & \phi_{1}(x)=1,\\
 & \phi_{2}(x)=1-\sin(2\pi x_{1})/4,\\
 & \phi_{3}(x)=1-\sin(2\pi x_{2})/4,\\
 & \phi_{4}(x)=1-\cos(2\pi x_{2})/4,\\
 & \phi_{5}(x)=1-\cos(2\pi x_{1})/4.
\end{aligned}
\label{eq:phii}
\end{equation}
(For physical constraints, all the quantities have to be positive.)
Non-periodic choices for the boundary conditions would be allowed
with no added difficulties for the reconstruction, provided that the
dictionary $A_{g}$ is properly chosen. The above boundary values
are expected to generate incoherent solutions to \eqref{eq:qpat-gamma=00003D1-pde}
in such a way to satisfy the conditions of complete sets as in  Definition~\ref{def:complete}.
In this case, the assumptions of Theorem~\ref{thm:main} would be verified
and the disjoint sparsity separation method  would be guaranteed to provide
the right reconstruction, even in presence of noise. However, this is not guaranteed a priori: as mentioned above, the conditions of  Definition~\ref{def:complete} are very hard to check.

\subsubsection{\label{subsub:Example-1--}Example 1 - convex inclusions}

We consider three convex constant inclusions in a homogeneous background,
as shown in Figure~\ref{fig:A-nonoise-mureal}. We choose to stop
the iteration procedure of OMP after 1500 iterations, which gives
satisfactory results. More accurate stopping criteria may be considered
\cite{2009-razansky}, but this is not among the aims of this work.

In a first experiment we consider one noise-free measurement, namely
we set $N=1$ and $n_{1}=0$ in \eqref{eq:qpat-ni}. The results are
shown in Figure~\ref{fig:A-all}. The solution to \eqref{eq:qpat-gamma=00003D1-pde}
with boundary value $\phi_{1}=1$ is shown in Figure~\ref{fig:A-u},
and the corresponding measurement $H_{1}=\tilde{\mu}\tilde{u}_{1}$
in Figure~\ref{fig:A-H1}. As it is evident from the images, the
inclusions are still clearly recognizable in $H_{1}$, but the corresponding
values of the absorption are corrupted by the multiplication by $\tilde{u}_{1}$.
The sparse separation approach is thus applied as discussed above,
and the reconstructed $\mu$ is shown in Figure~\ref{fig:A-mu}.
The relative reconstruction error is
\[
\frac{\left\Vert \log\mu-\log\tilde{\mu}\right\Vert _{2}}{\left\Vert \log\tilde{\mu}\right\Vert _{2}}\approx1.5\%.
\]
This shows that, in absence of noise, the reconstruction is almost
exact, even with only one measurement. This is in total agreement with the theoretical result discussed above. In the absence of noise ($\eta=0$), it is possible to satisfy the conditions in Definition~\ref{def:complete} in view of Proposition~\ref{prop:new_uncertainty} and Example~\ref{exa:2}. Indeed, the sparse approximations of $\log\tilde{\mu}$ and of $\log\tilde{u}_1$ with 500 wavelets and 100 sinusoids, respectively, give small approximation errors, namely $\rho_f\approx 0.54$ and $\rho_g\approx 0.05$. By \eqref{eq:1160}, CS2 is satisfied. Thus, the assumptions of Theorem~\ref{thm:main} are verified with $\epsilon\approx 0.59$, and the resulting recontruction is very good ($\norm{\log\mu-\log\tilde{\mu}}_2\approx 0.29$). As already mentioned in Example~\ref{exa:2}, this argument is not fully rigorous, since Proposition~\ref{prop:new_uncertainty} does not allow the constant image to be in the dictionary $A_g$.
\begin{figure}
\subfloat[The true absorption $\tilde{\mu}$]{\includegraphics[width=0.38\columnwidth]{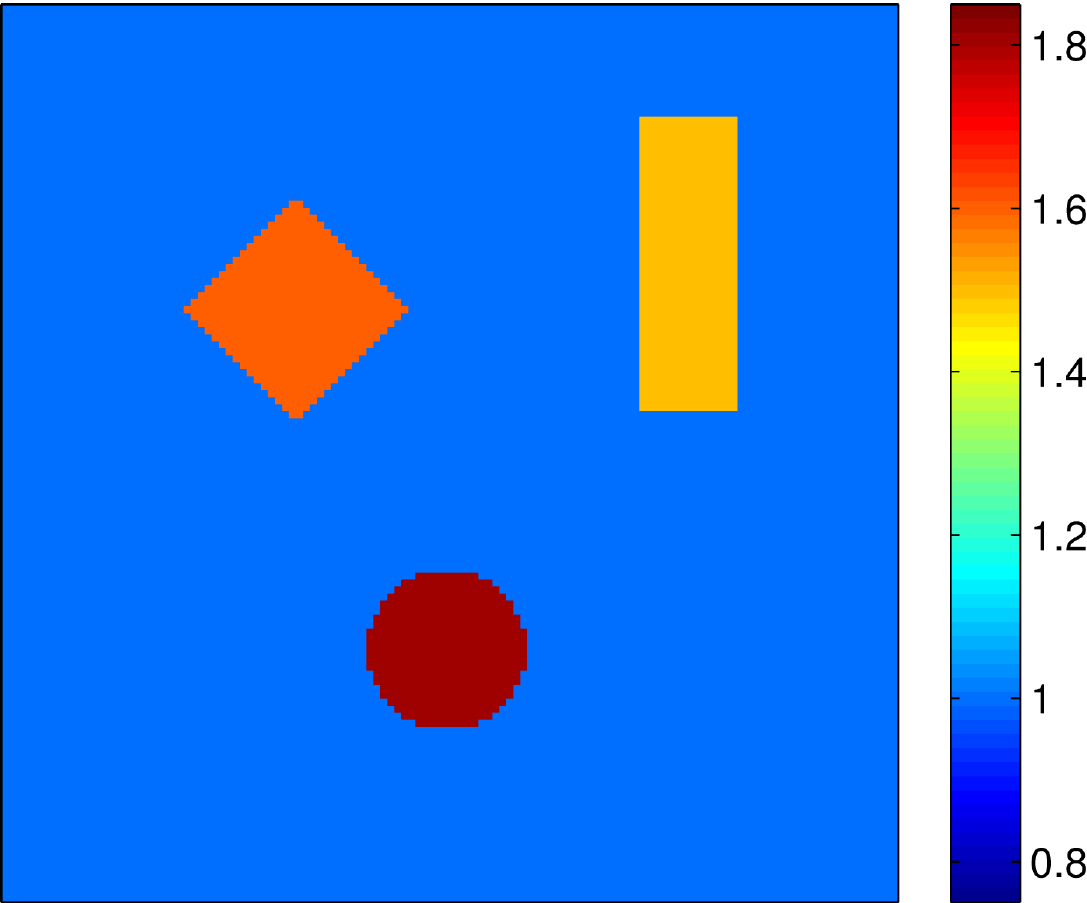}\label{fig:A-nonoise-mureal}}\quad{}\quad{}
\subfloat[The true intensity $\tilde{u}_{1}$]{\includegraphics[clip,width=0.38\columnwidth]{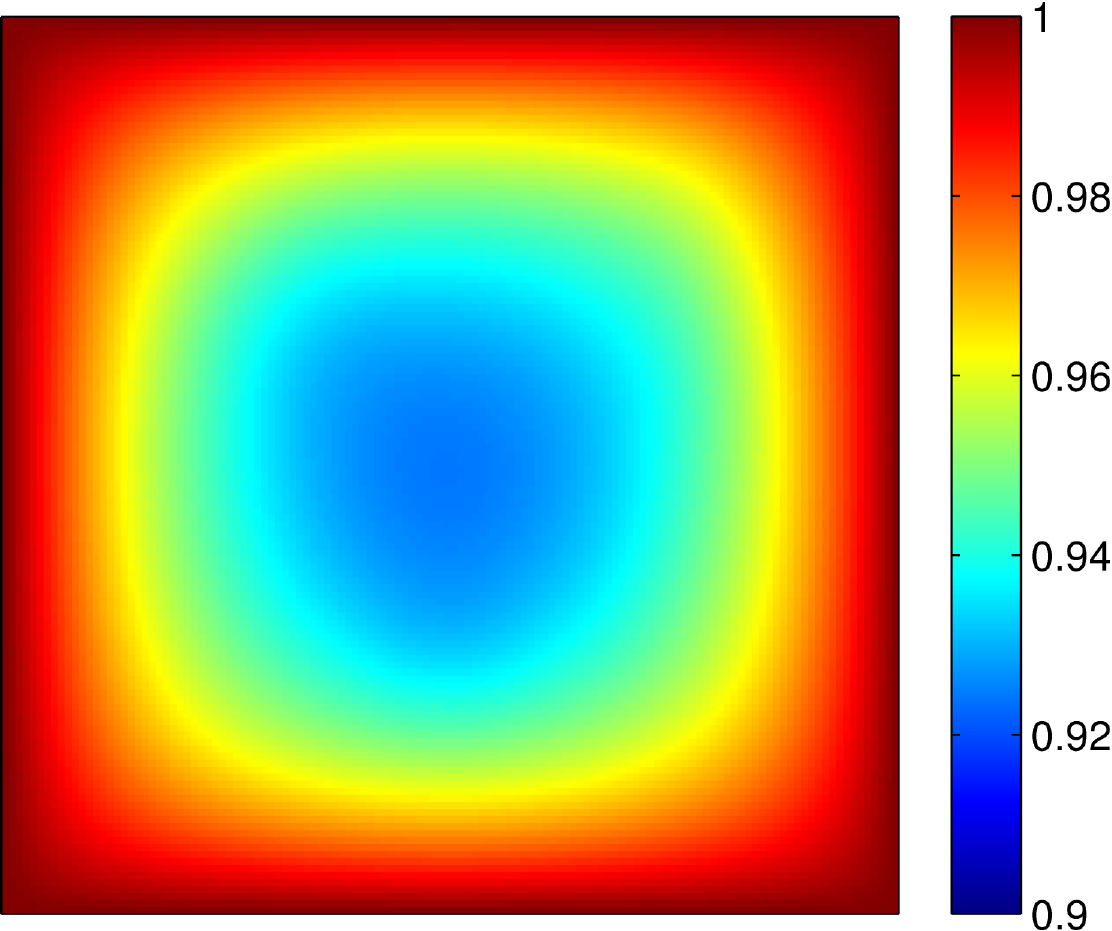}\label{fig:A-u}}

\subfloat[The datum $H_{1}$]{\includegraphics[width=0.38\columnwidth]{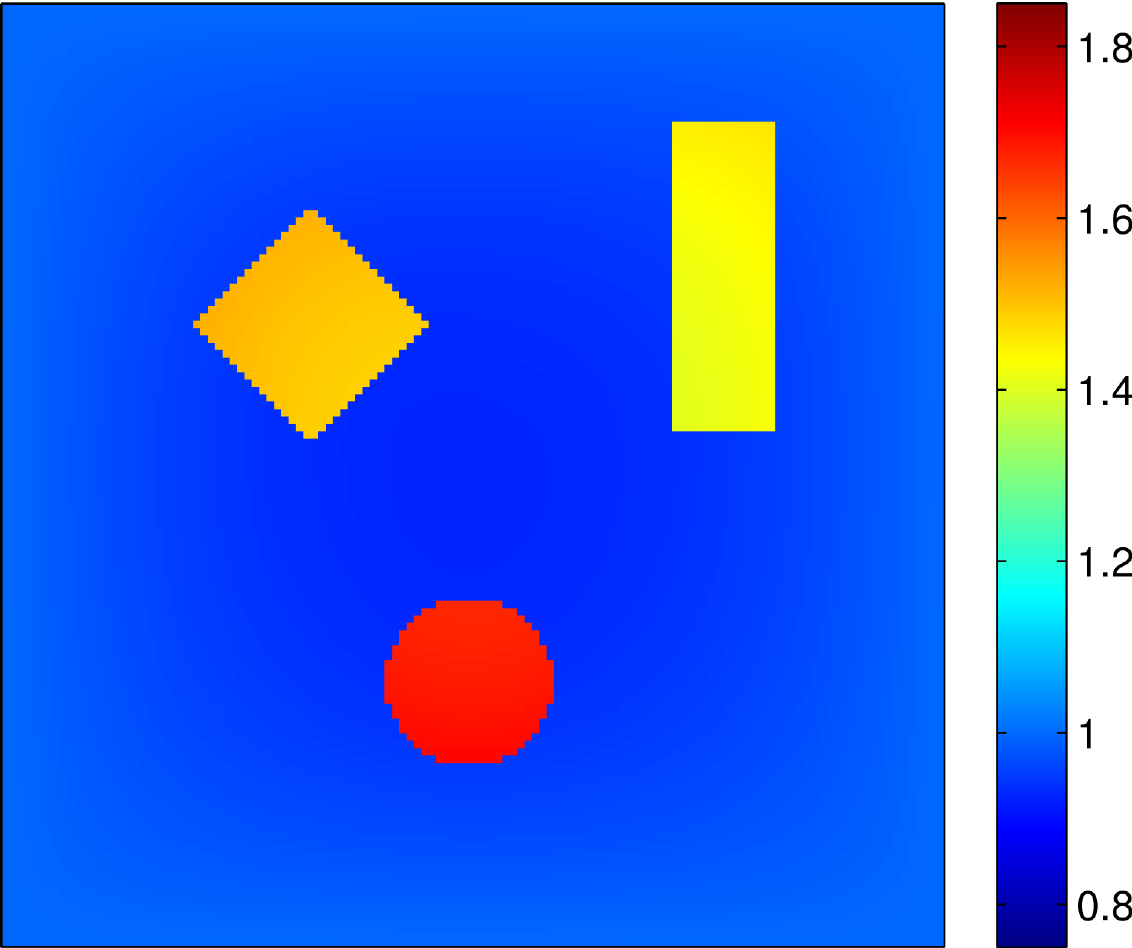}\label{fig:A-H1}}\quad{}\quad{}\subfloat[The reconstructed $\mu$ ]{\includegraphics[clip,width=0.38\columnwidth]{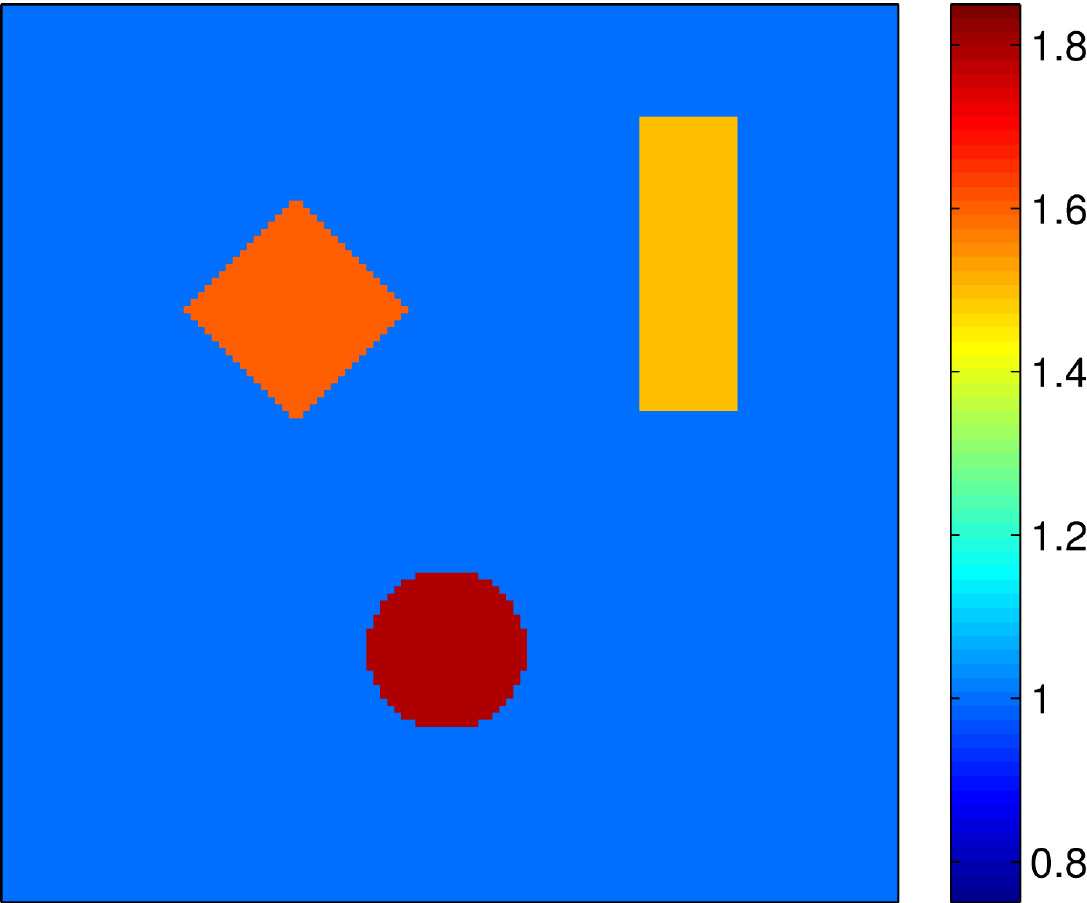}\label{fig:A-mu}

}\caption{Example 1. Noise-free case with one measurement.}
\label{fig:A-all}
\end{figure}

In a second experiment (Figure~\ref{fig:A-noise-all}) we add white
Gaussian noise $n_{i}$ in \eqref{eq:qpat-ni}. The noise level is
so that
\[
\frac{\left\Vert n_{i}\right\Vert _{2}}{\left\Vert \log(\tilde{\mu}\tilde{u}_{i})\right\Vert _{2}}\approx17.6\%.
\]
We tested the reconstruction procedure for $N=1,\dots,5$ and the
boundary values $\phi_{i}$ as in \eqref{eq:phii}. The data $H_{i}$,
$i=1,3,5$, are shown in Figures~\ref{fig:A-noise-H1}, \ref{fig:A-noise-H3}
and \ref{fig:A-noise-H5}, respectively. The reconstructed $\mu$'s
for $N=1$, $N=3$ and $N=5$ are shown in Figures~\ref{fig:A-noise-mu1},
\ref{fig:A-noise-mu3} and \ref{fig:A-noise-mu5}, respectively. The
reconstruction errors for $N=1,\dots,5$ are given  in Table~\ref{tab:A-noise}.

\begin{table}[h]
\caption{Example 1. The reconstruction errors $\left\Vert \log\mu-\log\tilde{\mu}\right\Vert _{2}/\left\Vert \log\tilde{\mu}\right\Vert _{2}$ for the noisy data.\label{tab:A-noise}}
\begin{tabular}{>{\centering}p{4cm}|>{\centering}p{1.2cm}>{\centering}p{1.2cm}>{\centering}p{1.2cm}>{\centering}p{1.2cm}>{\centering}p{1.2cm}}
$N$ & 1 & 2 & 3 & 4 & 5\tabularnewline
\hline 
No regularization & 74.4\% & 32.2\% & 28.4\% & 15.8\% & 7.5\%\tabularnewline
\hline 
TV regularization & 23.2\% & 6.0\% & 22.1\% & 4.8\% & 4.6\%\tabularnewline
\end{tabular}
\end{table}

\begin{figure}
\subfloat[The datum $H_{1}$.]{\includegraphics[clip,width=0.38\columnwidth]{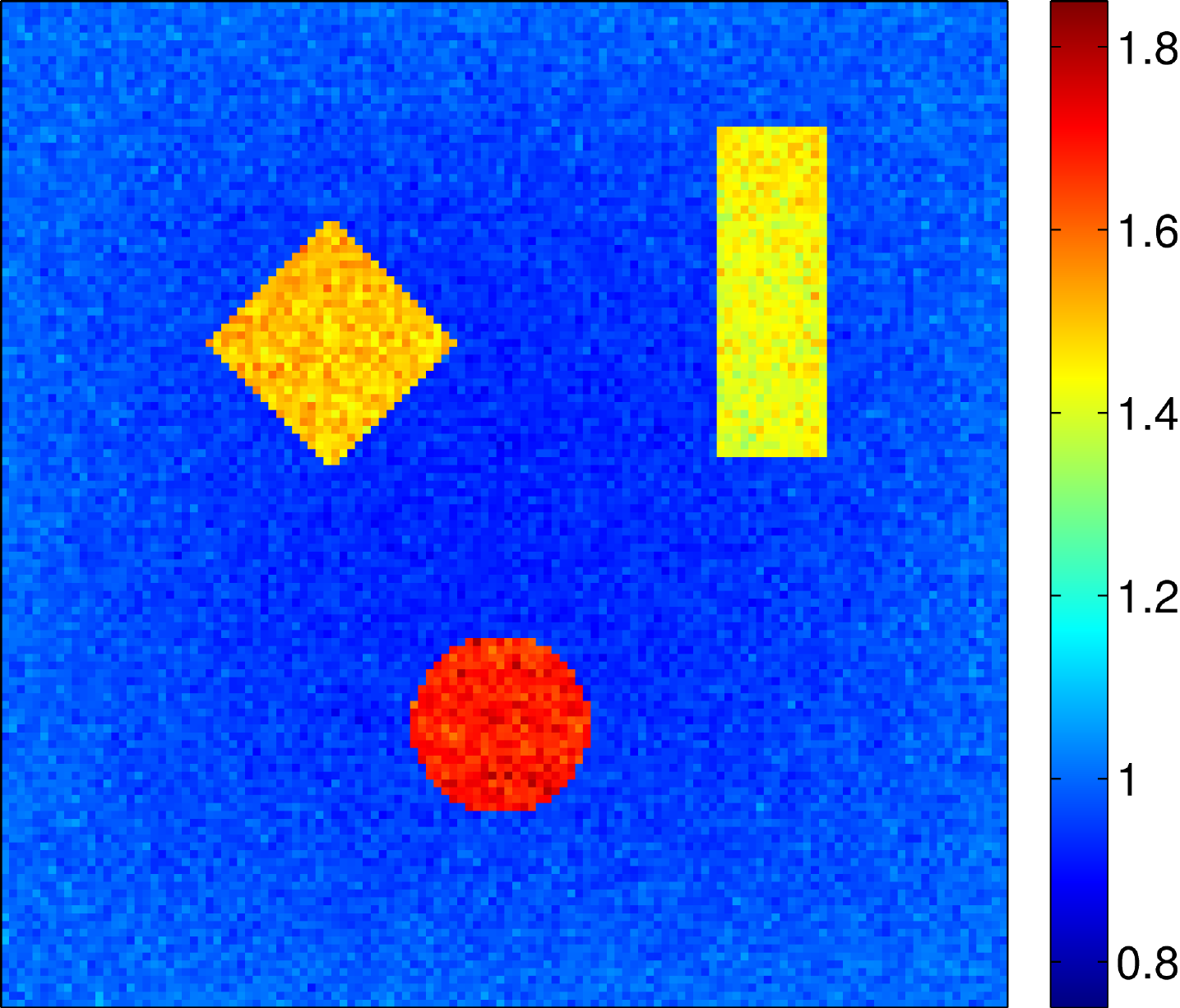}\label{fig:A-noise-H1}}\quad{}\quad{}
\subfloat[$\mu$, $N=1$.]{\includegraphics[clip,width=0.38\columnwidth]{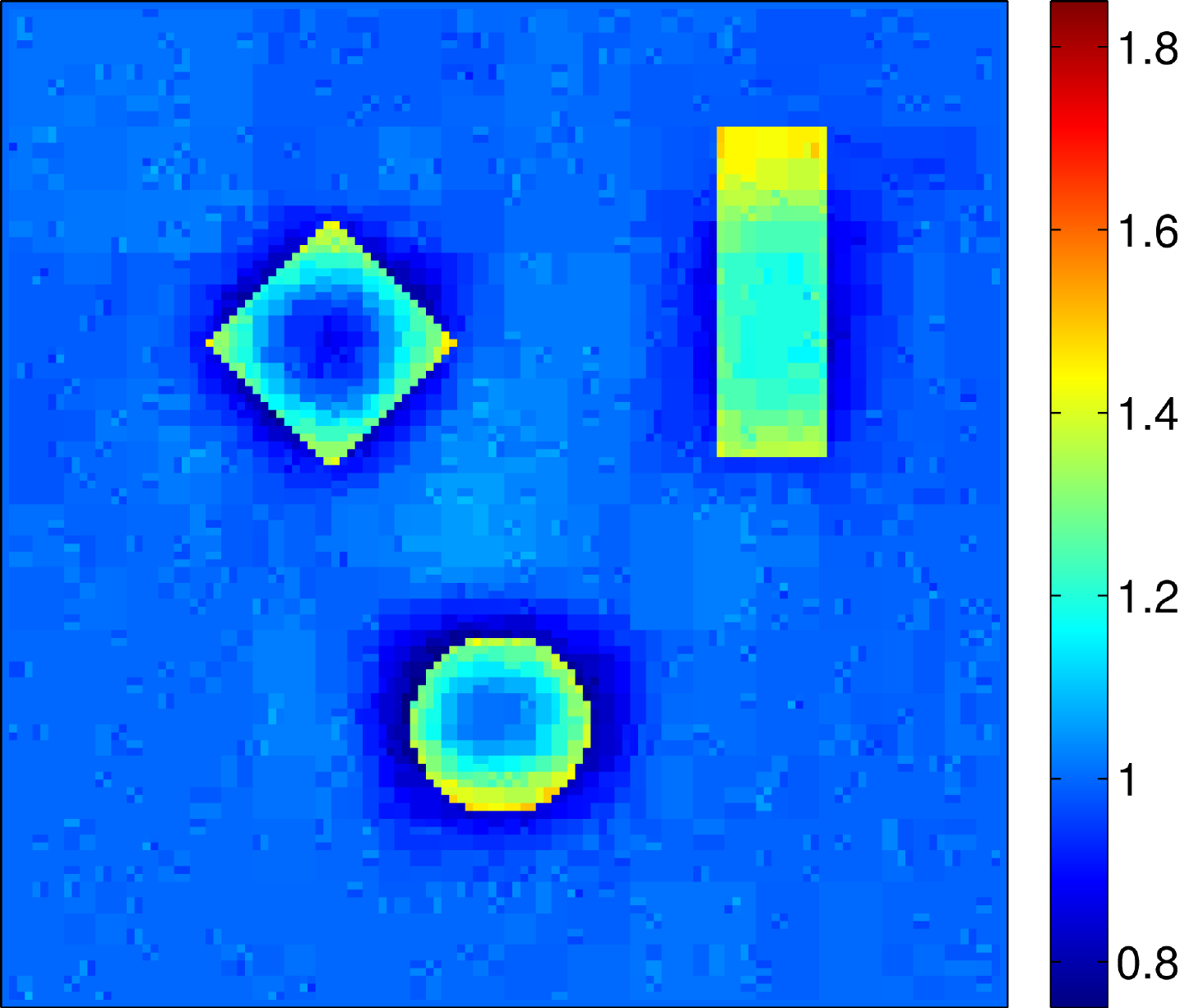}\label{fig:A-noise-mu1}}

\subfloat[The datum $H_{3}$.]{\includegraphics[width=0.38\columnwidth]{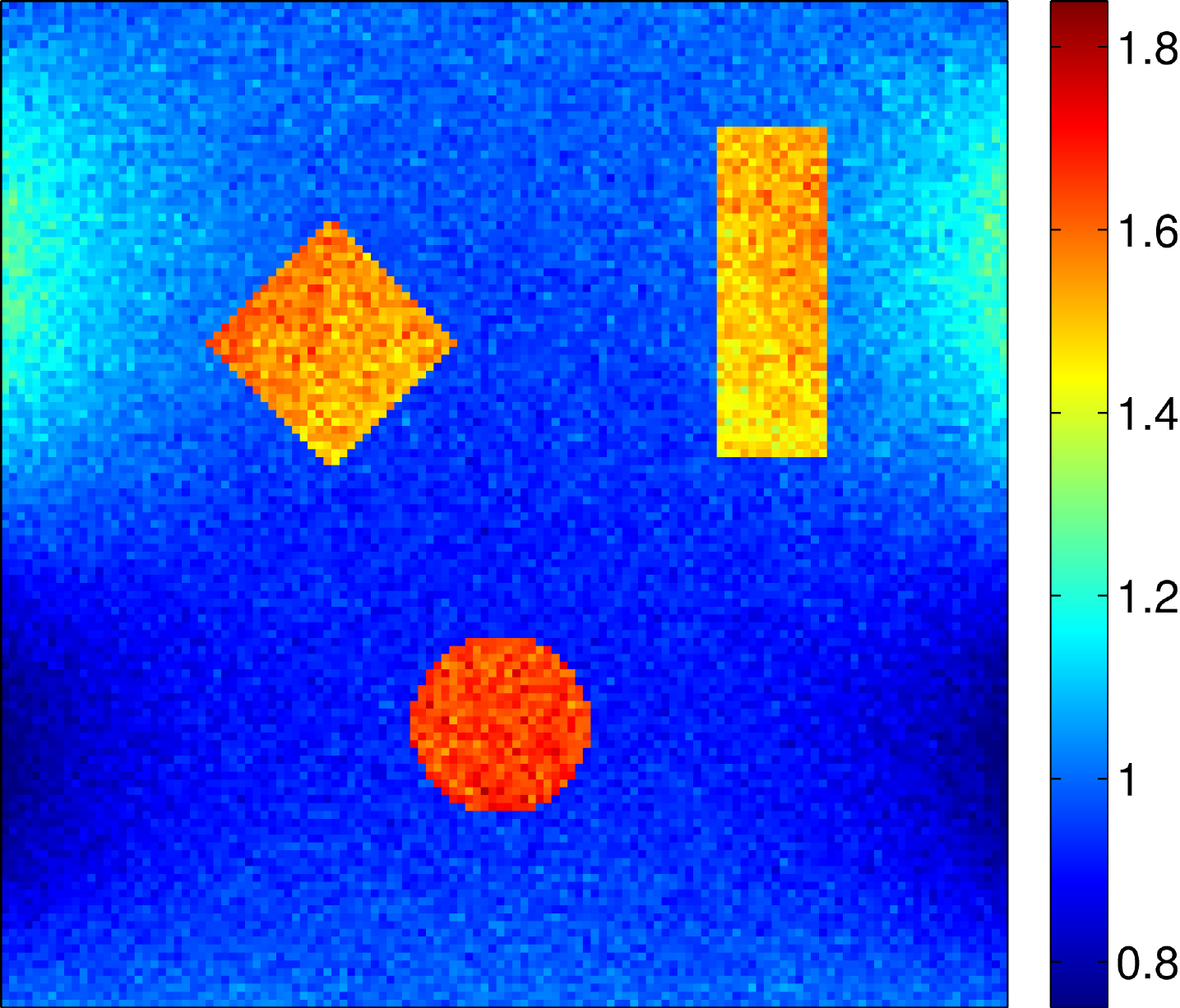}\label{fig:A-noise-H3}}\quad{}\quad{}\subfloat[$\mu$, $N=3$.]{\includegraphics[clip,width=0.38\columnwidth]{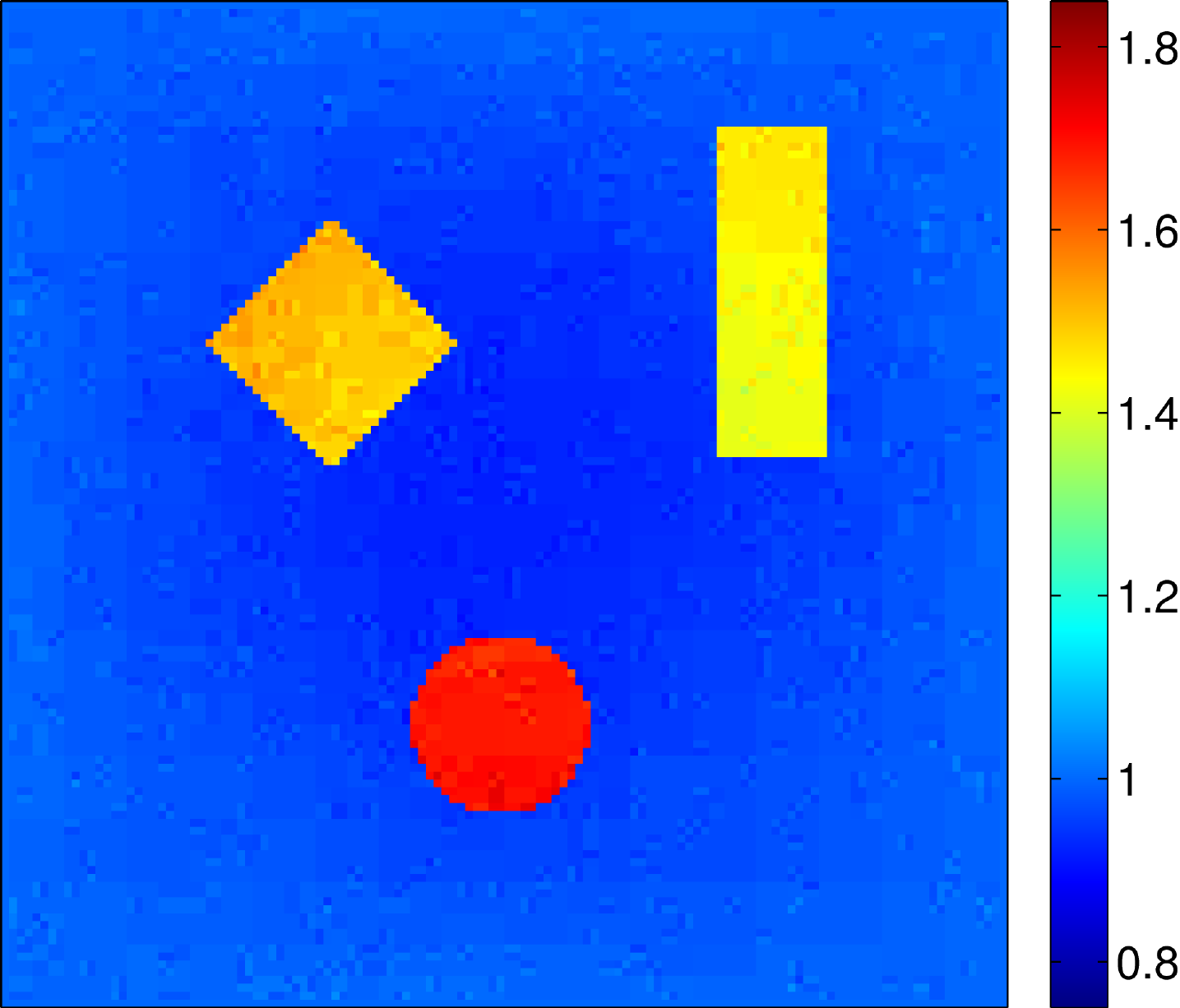}\label{fig:A-noise-mu3}}

\subfloat[The datum $H_{5}$.]{\includegraphics[width=0.38\columnwidth]{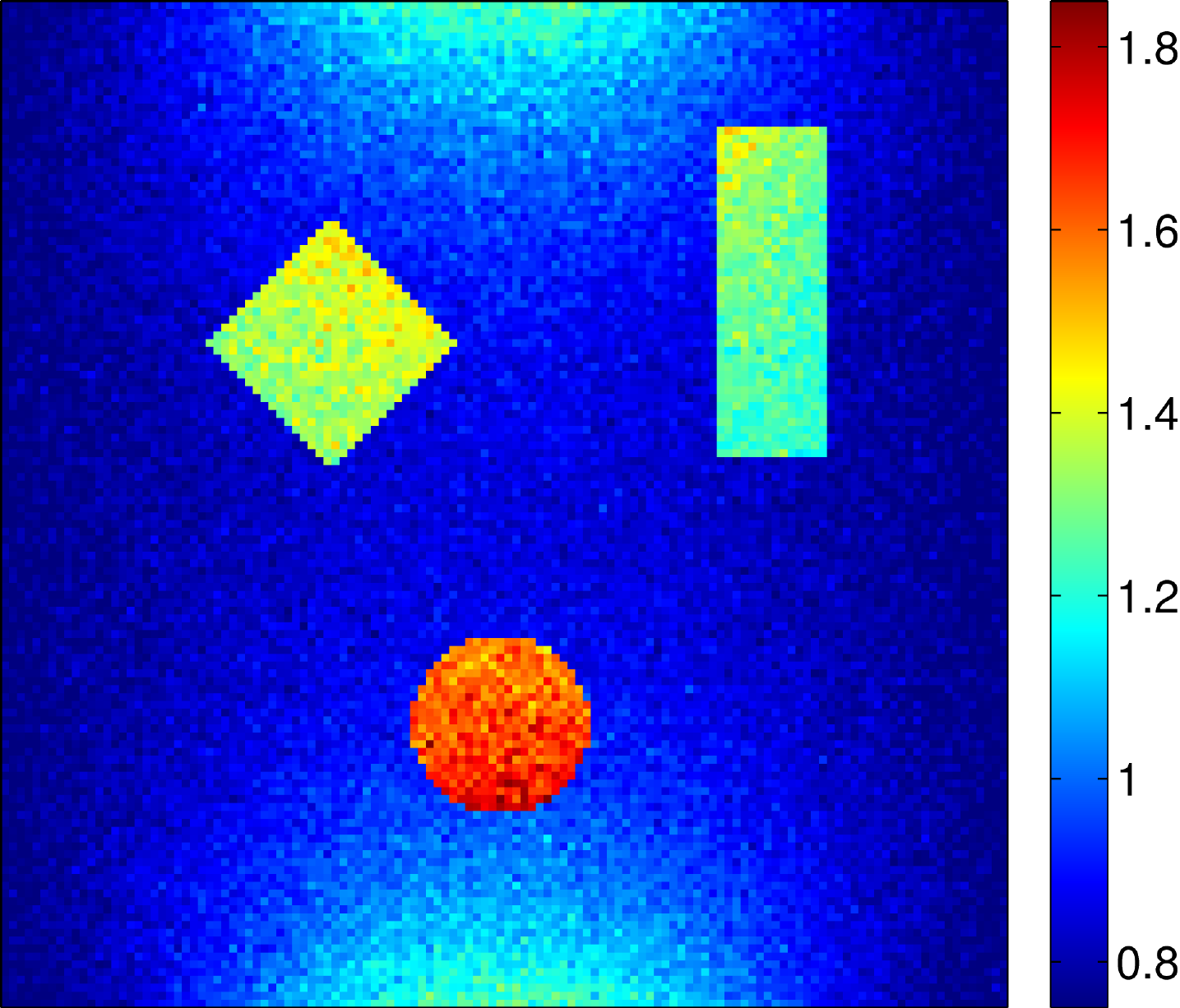}\label{fig:A-noise-H5}}\quad{}\quad{}\subfloat[$\mu$, $N=5$.]{\includegraphics[clip,width=0.38\columnwidth]{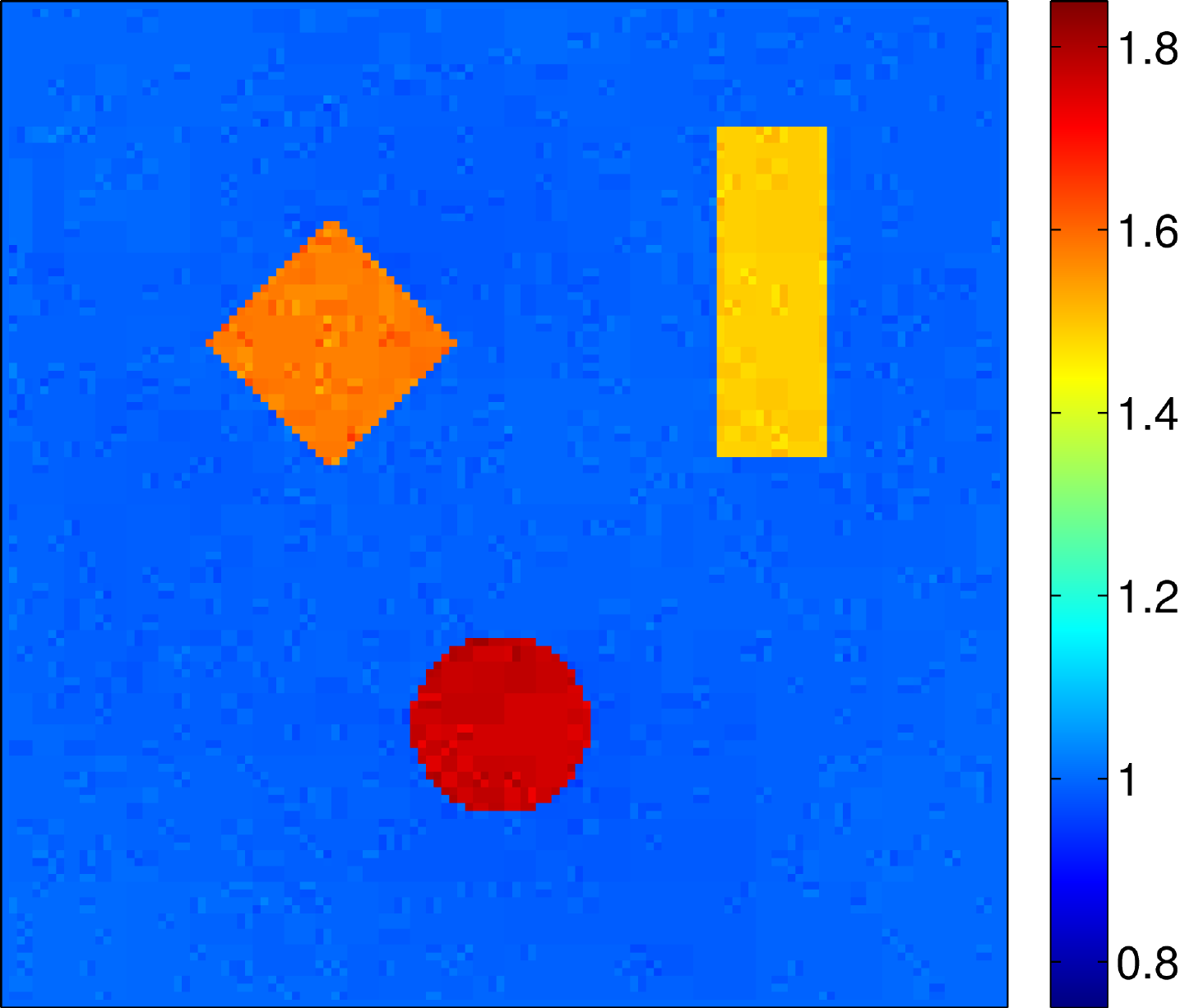}\label{fig:A-noise-mu5}}

\subfloat[$H_{2}$ with TV]{\includegraphics[width=0.38\columnwidth]{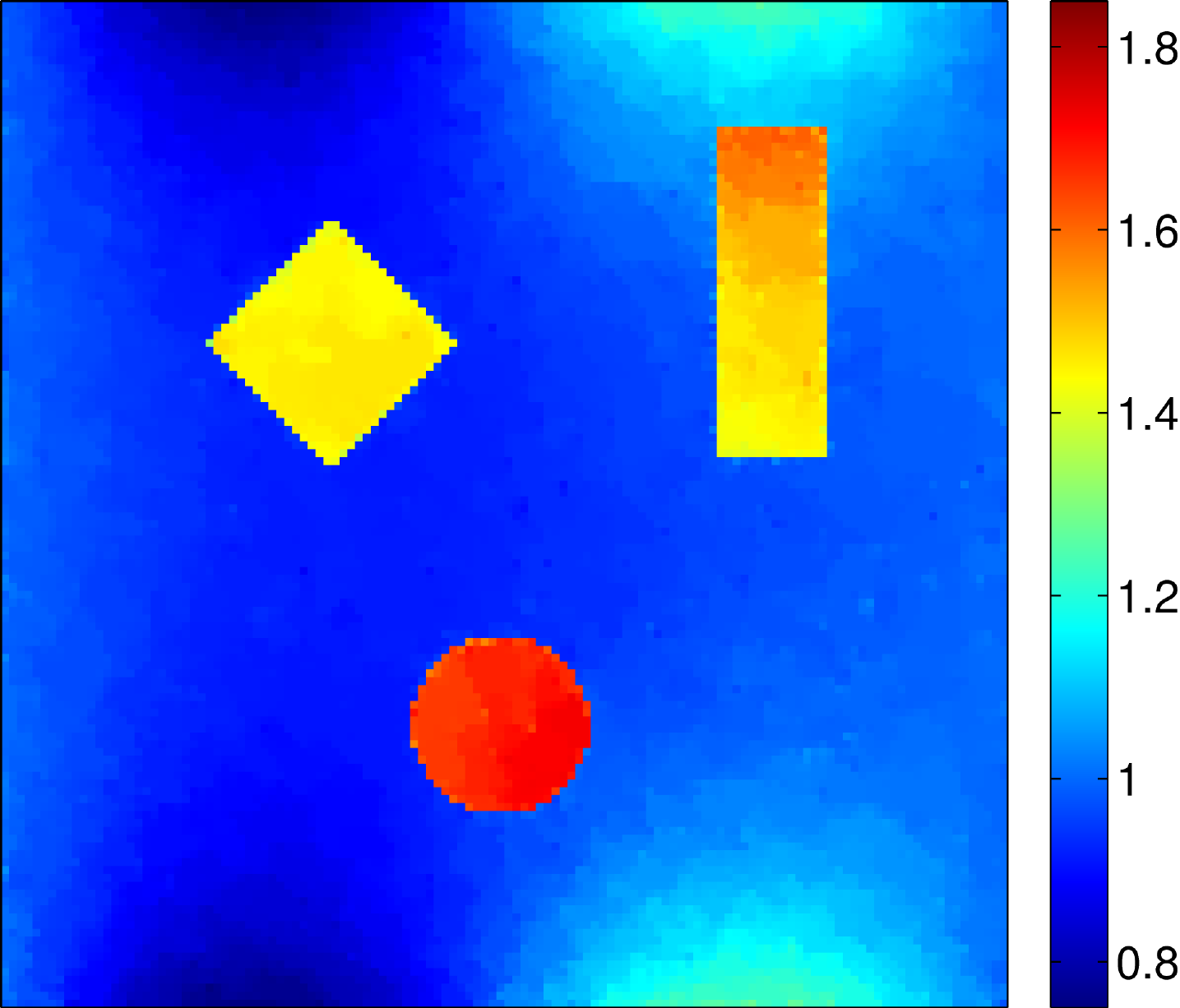}\label{fig:A-noise-H2-TV}}\quad{}\quad{}\subfloat[$\mu$ with TV, $N=2$.]{\includegraphics[clip,width=0.38\columnwidth]{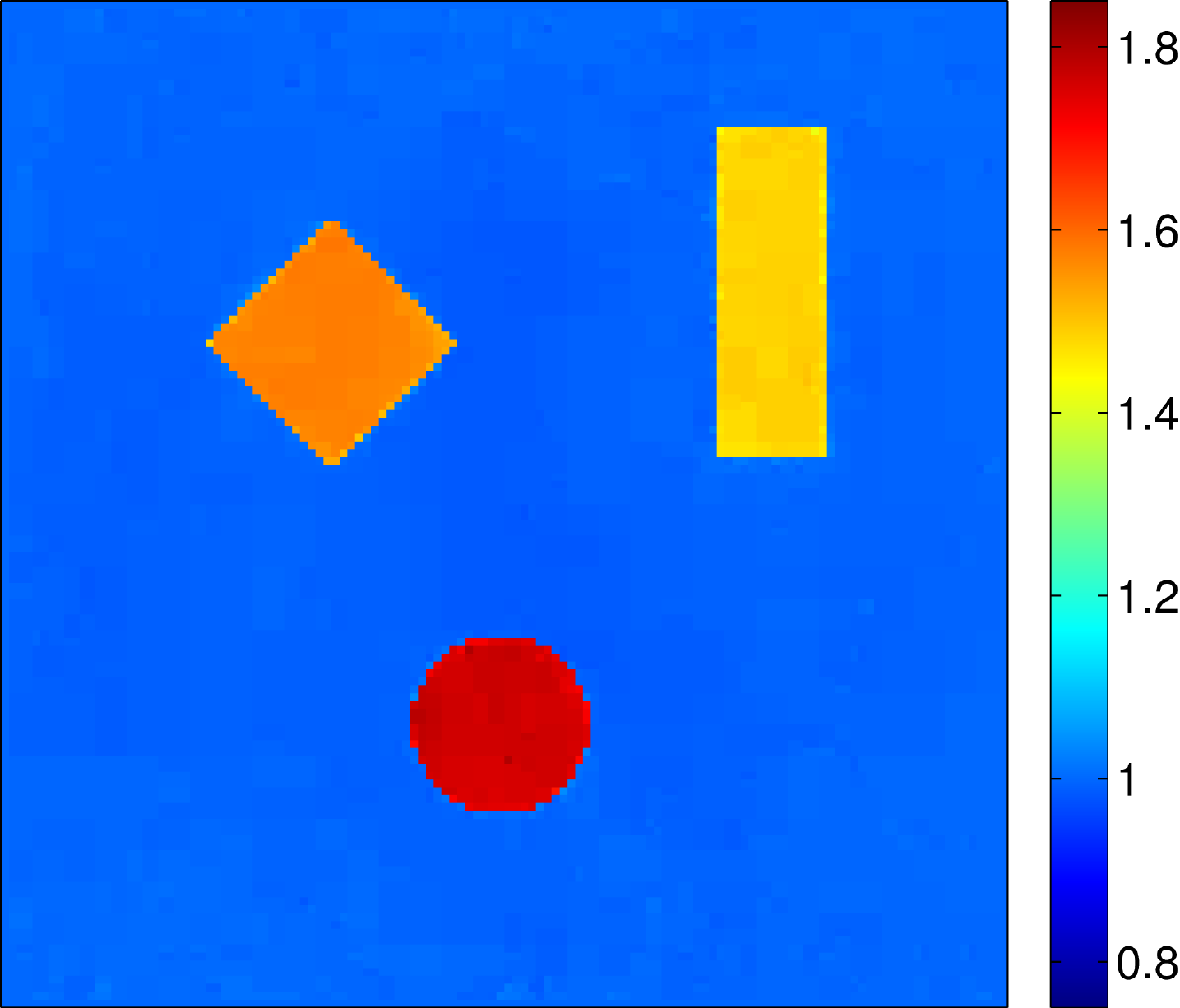}\label{fig:A-noise-mu2-TV}}\caption{Example 1. Noisy case with multiple measurements.}
\label{fig:A-noise-all}
\end{figure}

It is evident that the larger $N$ is, the more accurate the reconstruction
becomes. More precisely, when $N=1$ the reconstructed values of the
absorption in the inclusions are completely wrong. This is due to
the fact that the inclusions are roughly approximated by smooth atoms
in $A_{g}$ and then corrected with fewer atoms in $A_{f}$, and so
the sparsest approximation does not separate the two components as
desired. Choosing the same sparse approximations discussed above (so that CS2 is satisfied with $N=1$) gives a big value for $\epsilon$ in the noisy case, and so the reconstruction is not accurate. However, the problem is solved when more measurements are
added: CS2 is easily satisfied with lower values of $\rho_f$ and $\rho_g$
when $N$ becomes bigger (see Remark~\ref{rem:noise}).

The reconstruction with $N=5$ is very satisfactory if measurement
and reconstruction errors are compared. Indeed, the noise from the
data has almost disappeared in the reconstruction, without a separate
regularization. This is due to the implicit regularizing effect that
sparse representations provide.

We have also investigate the effect of an a priori total variation (TV) regularization \cite{1992-rudin-osher-fatemi} of the
measurements $h_i$ on the reconstruction, using a Matlab implementation based on the algorithm developed in \cite{2004-chambolle}. The regularization parameter was chosen a posteriori to achieve the best results, but in principle it can be learned from a training set \cite{2015-calatroni}. The corresponding reconstruction errors with different values of $N$ are shown in Table~\ref{tab:A-noise}: the improvement is significant only for a low number of measurements. For comparison, the regularized value of $H_2$ is shown in Figure~\ref{fig:A-noise-H2-TV} (where the usual staircase effect can be observed) and the reconstruction with $N=2$ measurements is shown in Figure~\ref{fig:A-noise-mu2-TV}.

\subsubsection{Example 2 - The Shepp\textendash{}Logan phantom}

Here, we let $\tilde{\mu}$ be the well-known Shepp-Logan phantom
(shown in Figure~\ref{fig:B-mureal}). We choose to stop the iterative
procedure of OMP after 2000 iterations. As above, we consider the
boundary conditions $\phi_{i}$ as in \eqref{eq:phii} and the corresponding
solutions $\tilde{u}_{i}$ to \eqref{eq:qpat-gamma=00003D1-pde},
for $i=1,\dots,5$, and measure the quantities $h_{i}$ as in \eqref{eq:qpat-ni}.

\begin{table}[h]
\caption{Example 2. Reconstruction errors for the noise-free case.\label{tab:B-nonoise}}
\begin{tabular}{>{\centering}p{4cm}|>{\centering}p{1.2cm}>{\centering}p{1.2cm}>{\centering}p{1.2cm}>{\centering}p{1.2cm}>{\centering}p{1.2cm}}
$N$ & 1 & 2 & 3 & 4 & 5\tabularnewline
\hline 
$\left\Vert \log\mu-\log\tilde{\mu}\right\Vert _{2}/\left\Vert \log\tilde{\mu}\right\Vert _{2}$ & 68.6\% & 24.8\% & 18.6\% & 11.4\% & 5.4\%\tabularnewline
\end{tabular}
\end{table}

\begin{figure}
\subfloat[$\tilde{\mu}$]{\includegraphics[clip,width=0.31\columnwidth]{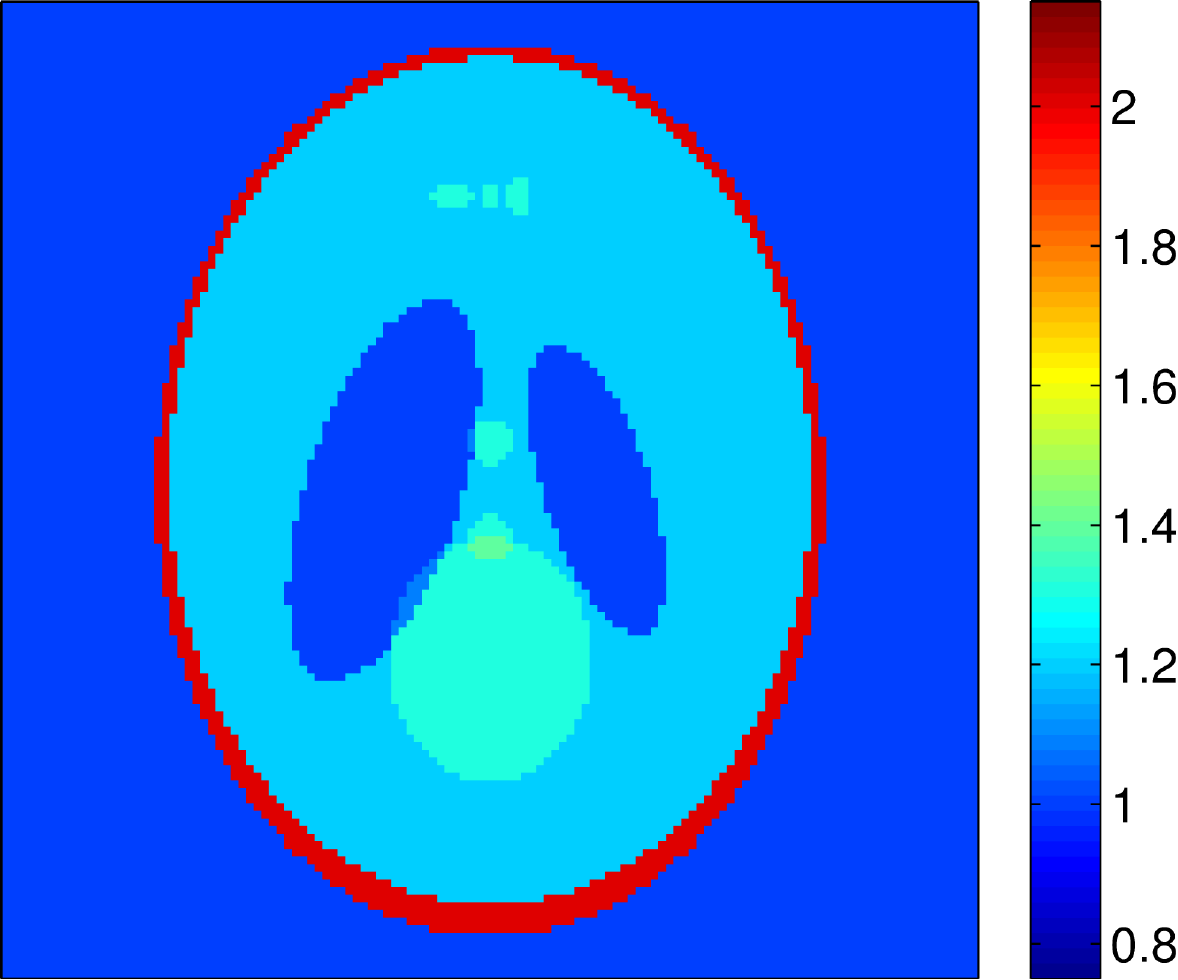}\label{fig:B-mureal}}\quad{}
\subfloat[$H_{2}$]{\includegraphics[clip,width=0.31\columnwidth]{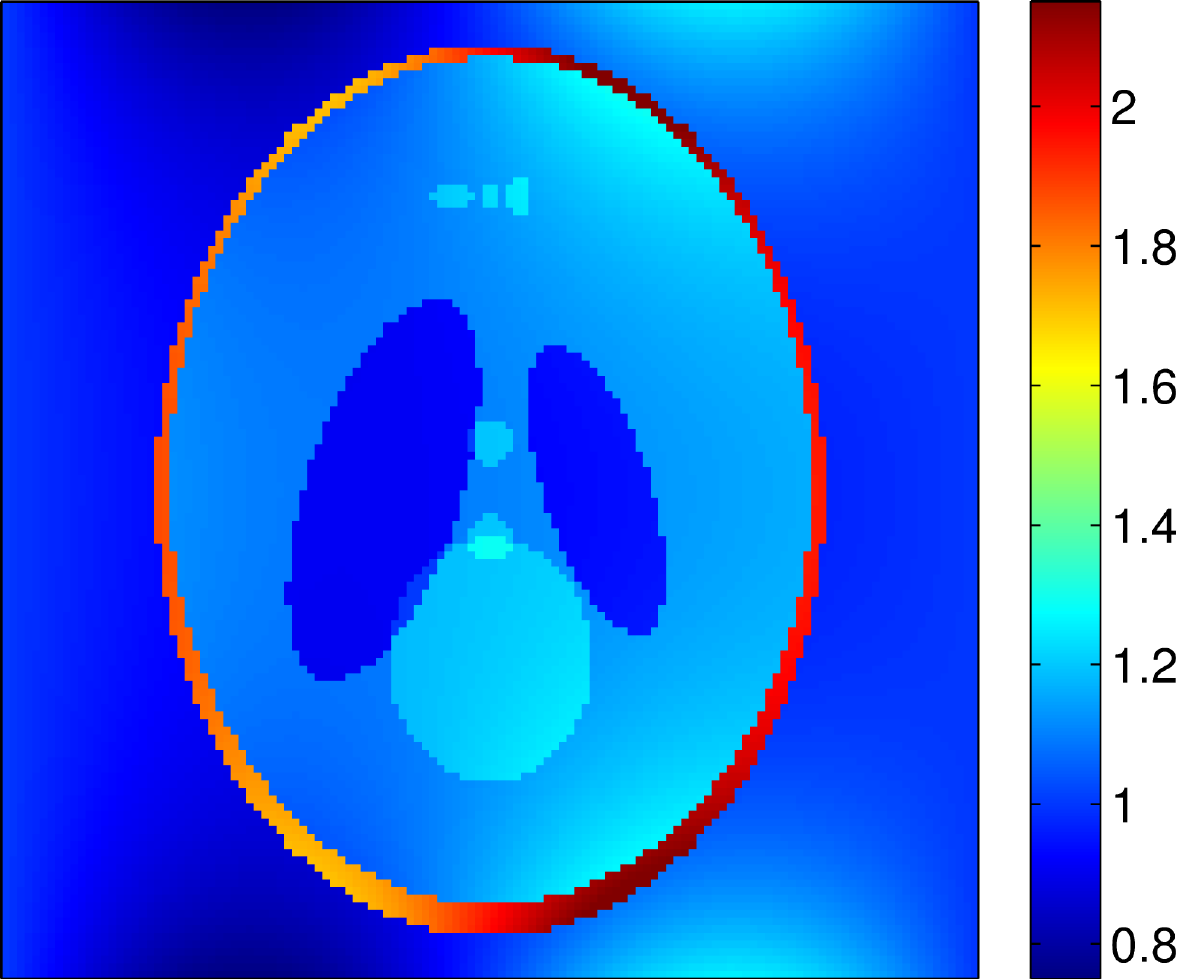}\label{fig:B-nonoise-H2}}\quad{}\subfloat[$H_{4}$]{\includegraphics[clip,width=0.31\columnwidth]{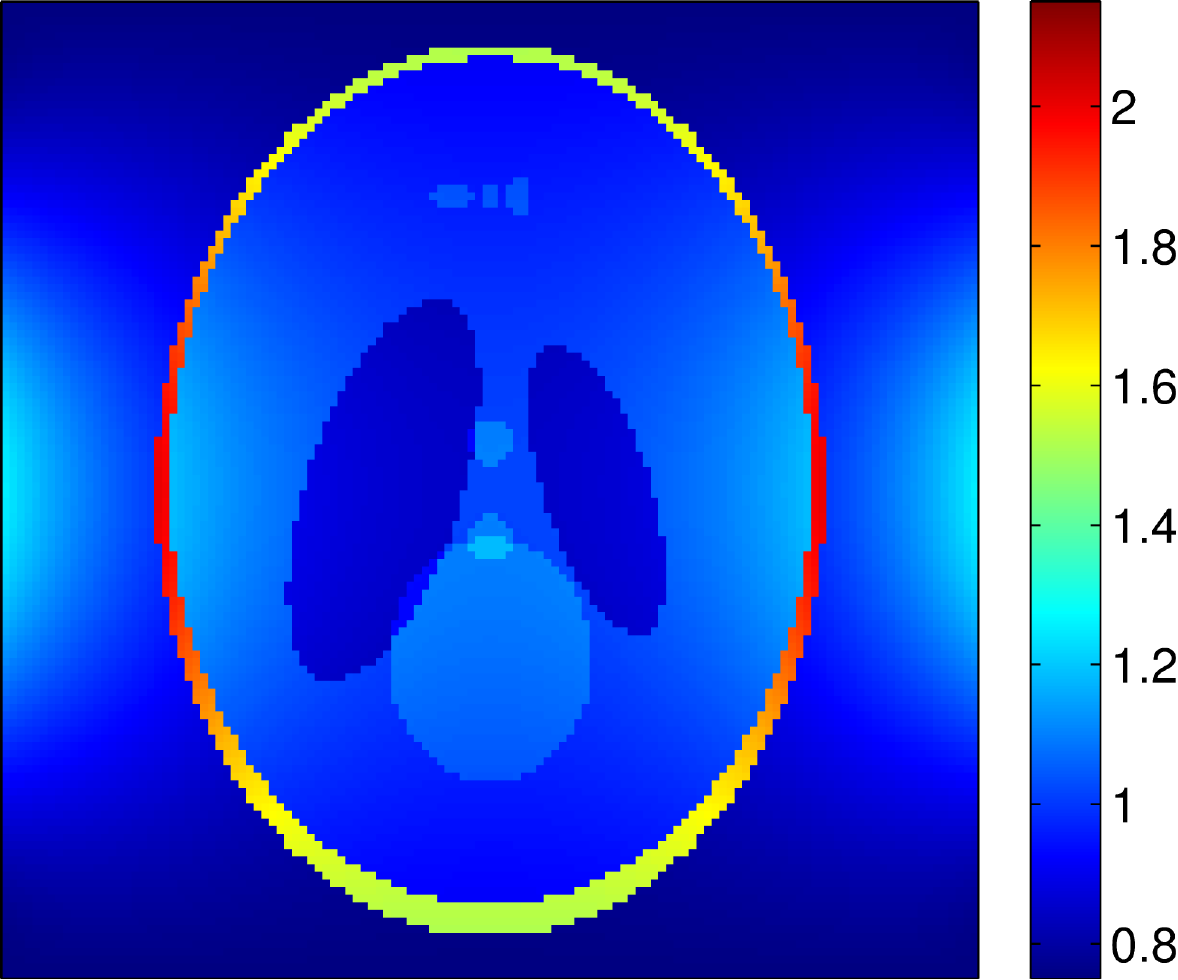}\label{fig:B-nonoise-H4}}

\subfloat[$\mu$, $N=1$]{\includegraphics[clip,width=0.31\columnwidth]{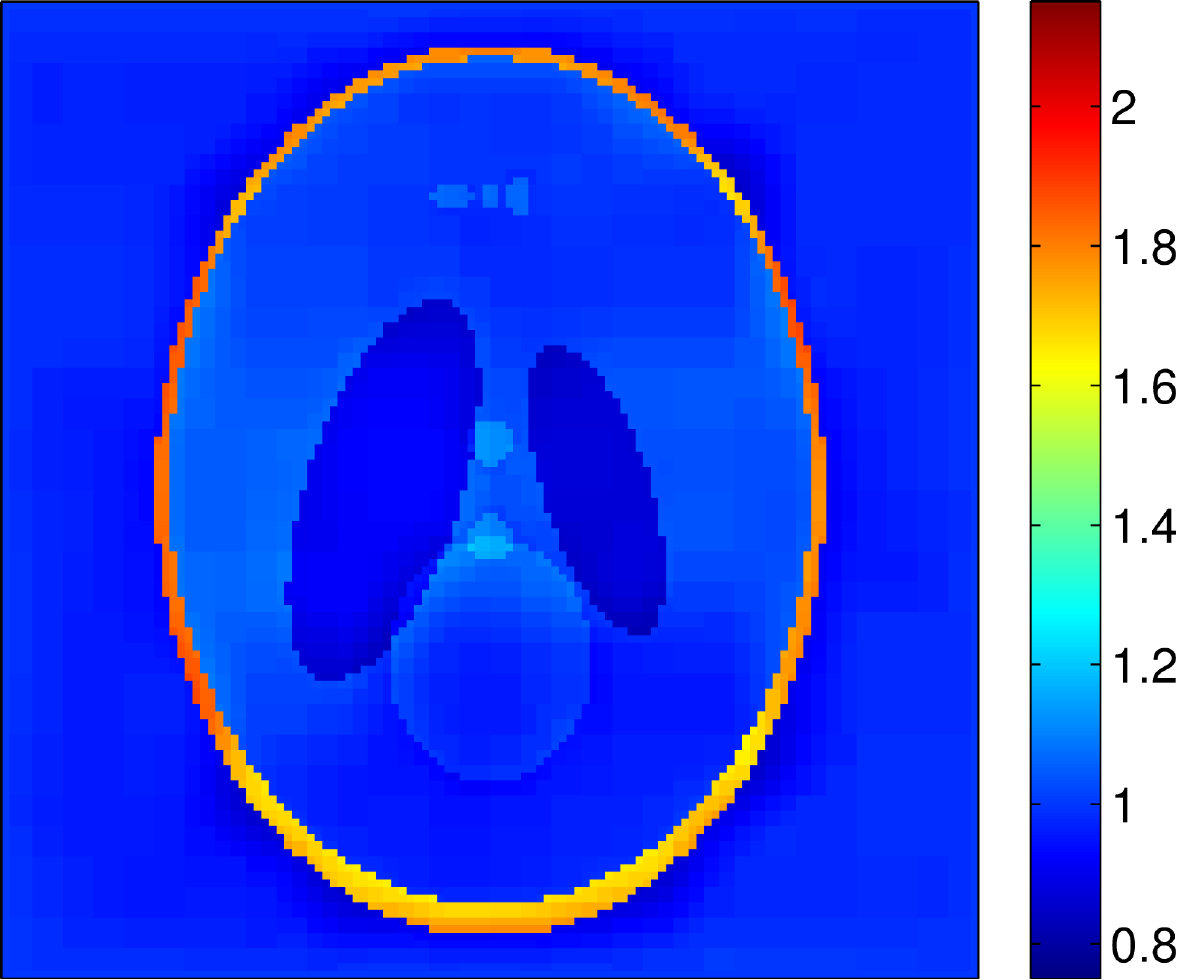}\label{fig:B-mureal-1}}\quad{}
\subfloat[$\mu$, $N=3$]{\includegraphics[clip,width=0.31\columnwidth]{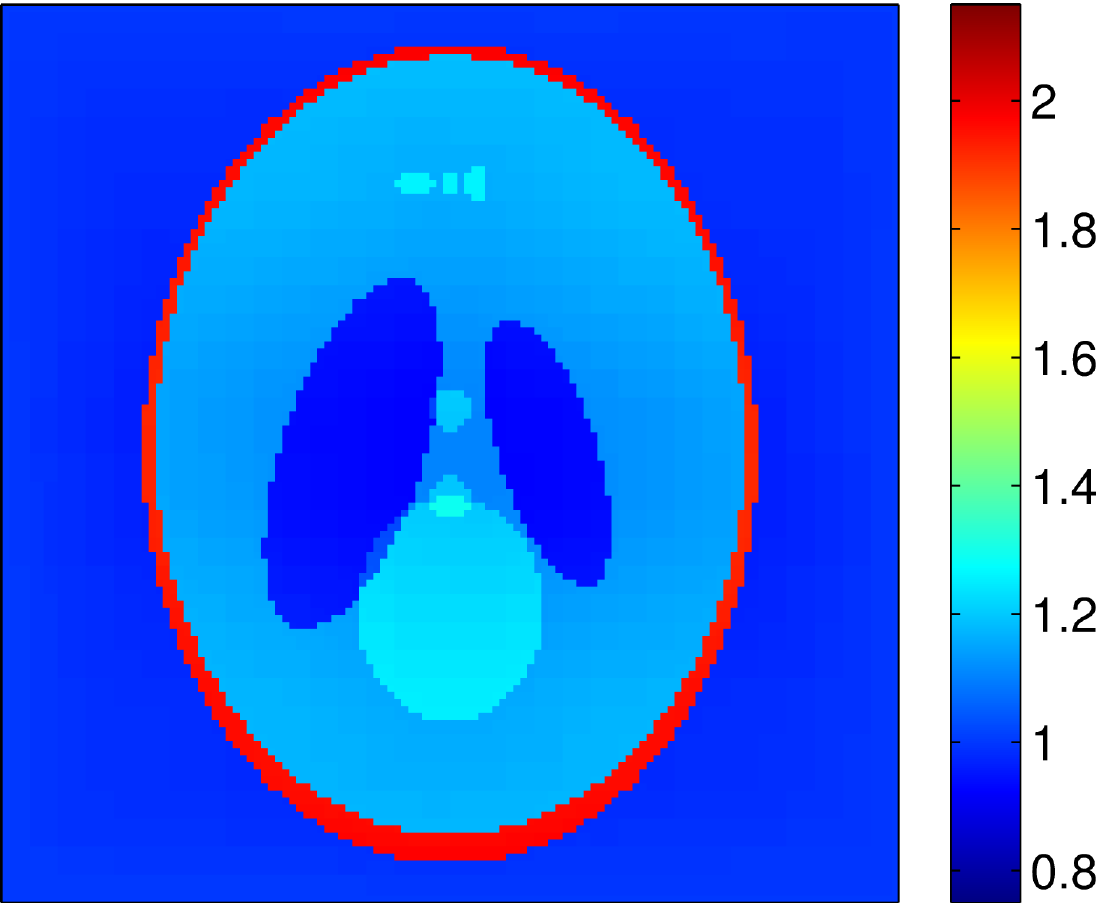}\label{fig:B-nonoise-H2-1}}\quad{}\subfloat[$\mu$, $N=5$]{\includegraphics[clip,width=0.31\columnwidth]{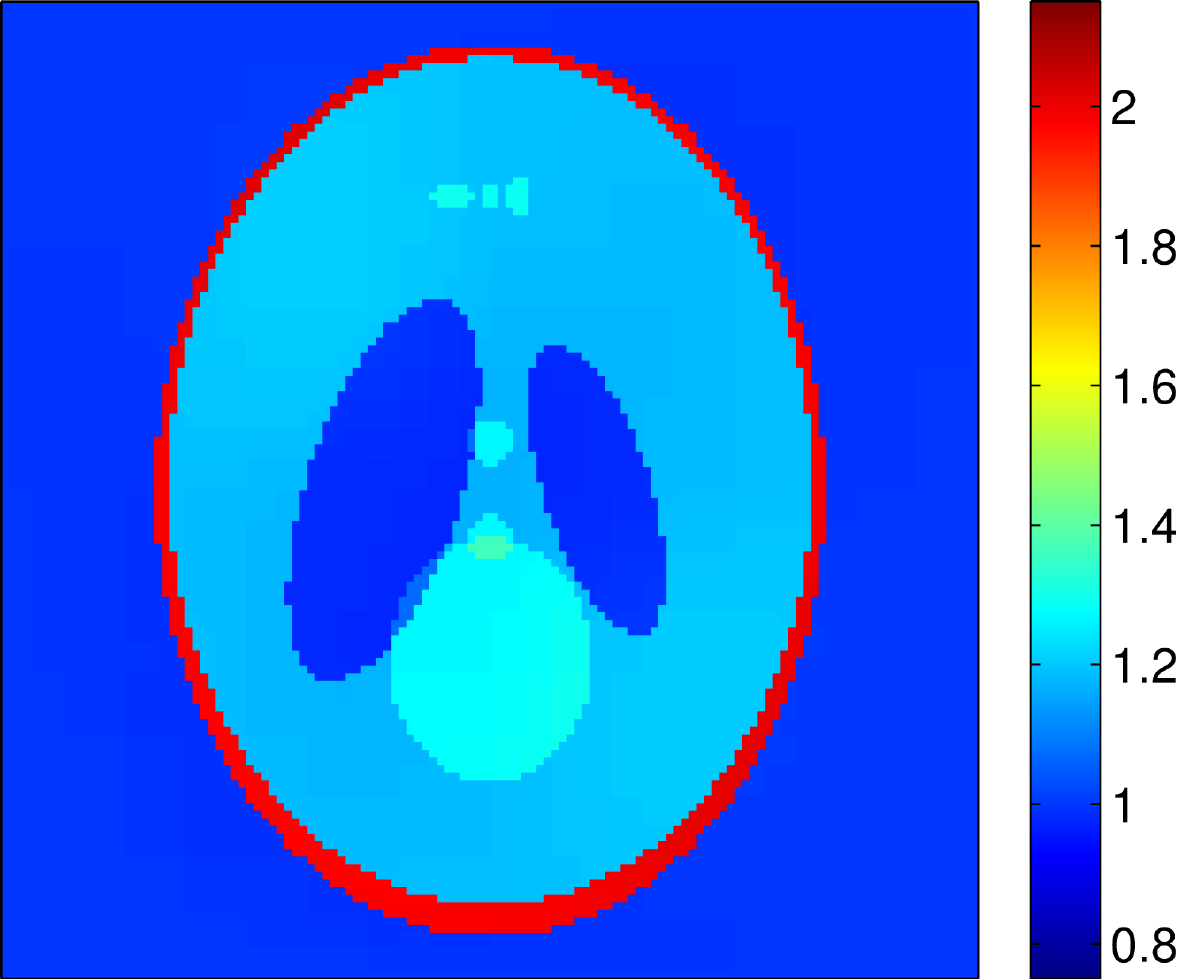}\label{fig:B-nonoise-H4-1}}

\caption{Example 2. Noise-free case with multiple measurements.}
\label{fig:B-nonoise}
\end{figure}

In a first experiment we consider the case without noise (Figure~\ref{fig:B-nonoise}).
The reconstruction errors for $N=1,\dots,5$ are shown in Table~\ref{tab:B-nonoise}.
We see that the reconstruction quality improves as $N$ increases,
as it is expected from the general theory discussed in Section~\ref{sec:Disjoint-sparsity-for}.
From a comparison with the previous case without noise, we notice
that more measurements are needed to have a satisfactory reconstruction.
This is due to the more complicated structure of the phantom, which
has a less sparse representation in terms of Haar wavelets than the
absorption considered in the previous case. Thus, a higher $N$ is
needed to satisfy the conditions in Definition~\ref{def:complete}.

In a second experiment we add white Gaussian noise $n_{i}$ to the
data in \eqref{eq:qpat-ni}. The noise level is such that 
\[
\frac{\left\Vert n_{i}\right\Vert _{2}}{\left\Vert \log(\tilde{\mu}\tilde{u}_{i})\right\Vert _{2}}\approx17.8\%.
\]
Motivated by the noisy-free case, we perform the reconstruction with
$N=5$ measurements. The reconstruction error is $\left\Vert \log\mu-\log\tilde{\mu}\right\Vert _{2}/\left\Vert \log\tilde{\mu}\right\Vert _{2}=17.0\%$,
that is comparable to the measurement error. The results are shown
in Figure~\ref{fig:B-noise15}. It is expected that adding more measurements
would improve the quality of the reconstruction.

\begin{figure}
\subfloat[$H_{3}$]{\includegraphics[clip,width=0.31\columnwidth]{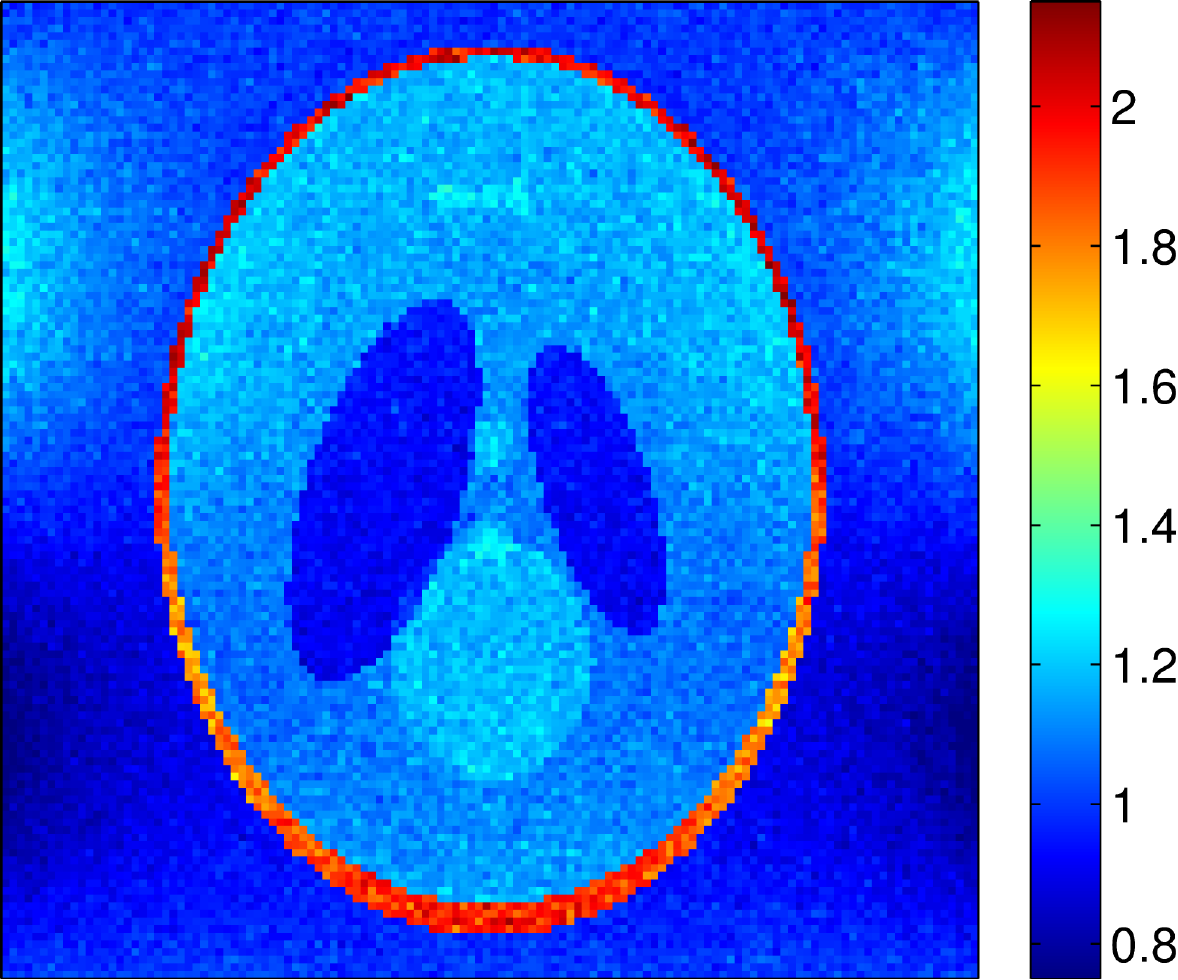}\label{fig:compare_u1-1}}\quad{}
\subfloat[$H_{5}$]{\includegraphics[clip,width=0.31\columnwidth]{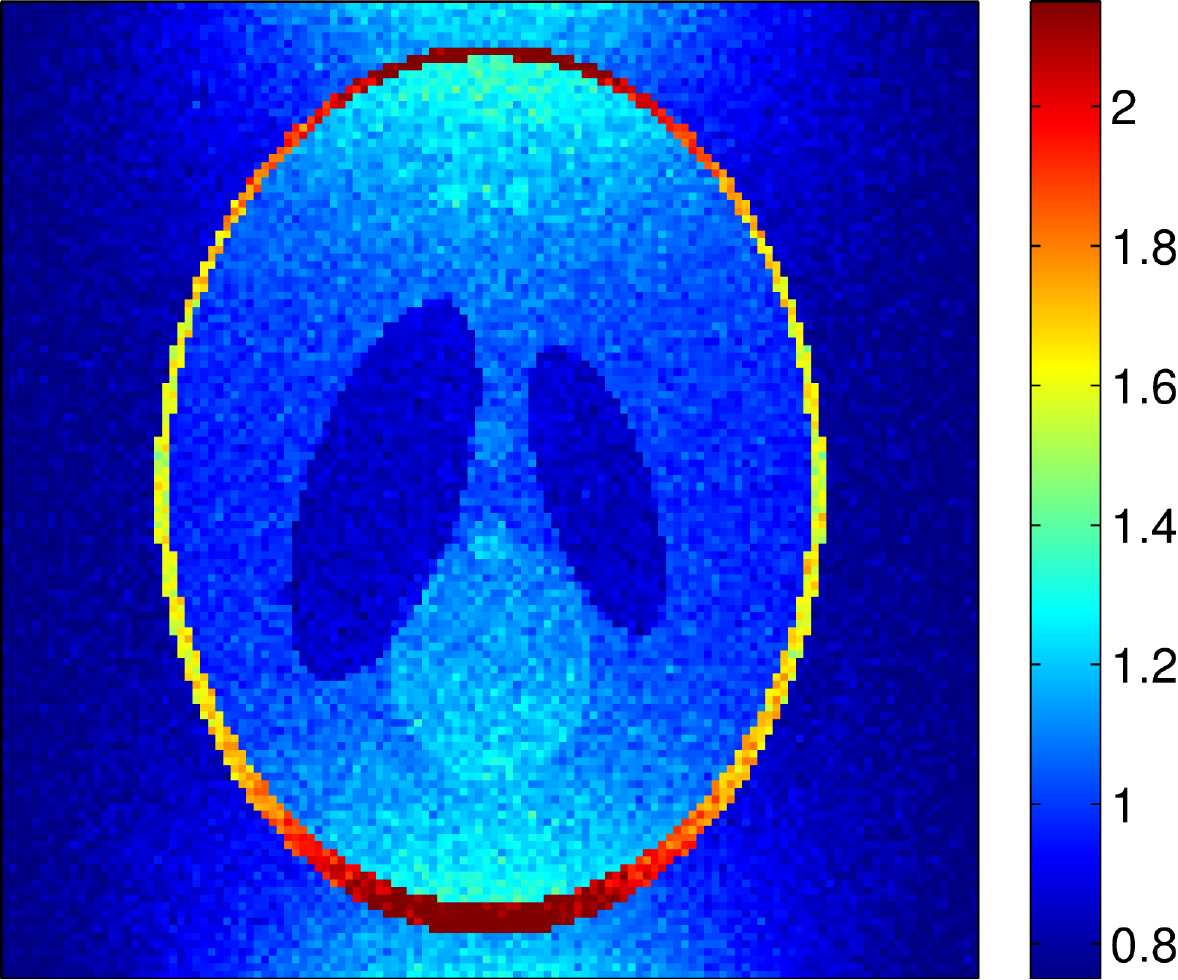}\label{fig:compare_u01-1}}\quad{}\subfloat[$\mu$, $N=5$]{\includegraphics[clip,width=0.31\columnwidth]{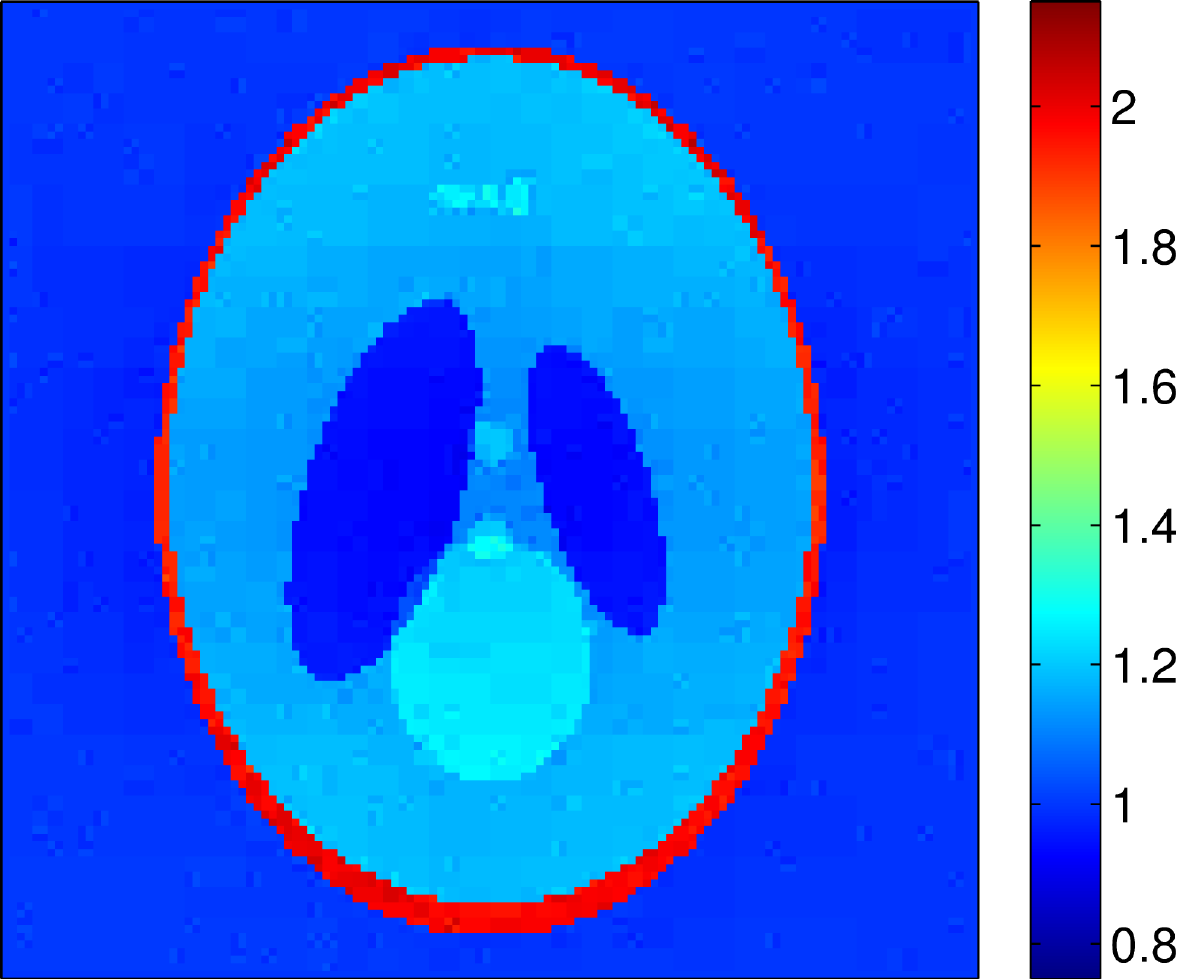}\label{fig:compare_u1/u01-1}}

\caption{Example 2. Noisy case with $N=5$ measurements.}
\label{fig:B-noise15}
\end{figure}

\subsection{\label{sub:Quantitative-photo-acoustic-tomo}Quantitative photo-acoustic
tomography in the diffusive regime with variable $\Gamma$}

\subsubsection{Introduction}

We consider here the problem of quantitative photoacoustic tomography,
as introduced above, without the further assumption $\Gamma=1$. Thus,
the unknown absorption $\mu$ has to be reconstructed from
\[
H(x)=\Gamma(x)\mu(x)u(x),\qquad x\in\Omega.
\]
We consider the diffusion approximation \eqref{eq:pat-diffusive}
of light propagation:
\begin{equation}
-\div(D\nabla u)+\mu u=0\quad\text{in }\Omega.\label{eq:pat-diffusive-1}
\end{equation}
For simplicity, here we shall augment this equation with Dirichlet
boundary conditions, that are supposed to be measurable. Note that
$D$, $\Gamma$ and $\mu$ are unknowns of the problem. In contrast
with the case $\Gamma=1$, where \eqref{eq:pat-diffusive-1} was merely
used to compute the data but not in the inverse problem, we shall
make full use of this PDE. Let us briefly review the main known results
on this inverse problem. Bal and Ren \cite{bal-kui-2011} showed that
when $\Gamma$, $D$ and $\mu$ are all unknown, then there is no
uniqueness for the inverse problem  even with all the measurements
$H$ for all solutions $u$ to \eqref{eq:pat-diffusive-1}; namely, the PDE approach alone is not sufficient to reconstruct all the parameters. If any
of these parameters is known, then the others may be reconstructed
by using the PDE. The same authors have proved that all the coefficients
may be uniquely reconstructed by using multi-frequency measurements,
under certain assumptions on the dependency of the parameters on the
frequency \cite{2012-bal-ren}. Naetar  and Scherzer \cite{2014-scherzer-naetar}
studied the case of piecewise constant parameters: all the unknowns can
be uniquely determined, but the method may be very sensitive to noise.

We propose here for the single-frequency case a mixed approach combining
the following aspects.
\begin{itemize}
\item As in \cite{bal-kui-2011}, the PDE \eqref{eq:pat-diffusive-1} can
be used in the reconstruction. However, one degree of freedom for
the parameters remain.
\item As in \cite{2014-scherzer-naetar}, the PDE method gives unique reconstruction
under the finite dimensionality assumption of the coefficient spaces.
\item The disjoint sparsity signal separation method may be applied to this
case as in $\S$~\ref{sub:Quantitative-photoacoustic-tomog-gamma=00003D1}.
\end{itemize}
The combination of such approaches consists in substituting the piecewise
constant assumption with the sparsity assumption, and then in the use of \eqref{eq:pat-diffusive-1}
to reconstruct all the parameters. More precisely, the reconstruction
algorithm proposed here is substantially divided into the following
three main steps.
\begin{enumerate}
\item By using the disjoint sparsity signal separation method applied to
\[
h_{i}=\log H_{i}=\log(\Gamma\mu)+\log u_{i},\qquad i=1,\dots,N,
\]
the solutions $u_{i}$ are reconstructed.
\item Following \cite{bal-kui-2011}, by using the PDE
\[
-\div(Du_{i}\nabla\frac{u_{j}}{u_{i}})=0\quad\text{in }\Omega,
\]
with three suitable measurements, the diffusion $D$ can be uniquely
determined.
\item Finally, the absorption can be directly reconstructed via
\[
\mu=\frac{\div(D\nabla u_{i})}{u_{i}}\quad\text{in }\Omega,
\]
and possibly averaging over $i$.
\end{enumerate}

\subsubsection{The reconstruction algorithm}

Even though theoretically satisfactory, the algorithm summarized above
is not applicable in practice as it stands. Indeed, the reconstruction
of $D$ in (2) is not too sensitive to errors in $u_{j}$, but that
of $\mu$ in (3) is. To understand this, we compare the solutions
$u_{i}$ to \eqref{eq:pat-diffusive-1} with $D=1$ and $\mu$ as
in Figure~\ref{fig:A-nonoise-mureal} and the solutions $u_{i}^{0}$
to \eqref{eq:pat-diffusive-1} with $D=1$ and $\mu=\mu^{0}=1$, with
boundary conditions given by \eqref{eq:phii}:
\begin{equation}
\left\{ \begin{array}{l}
-\div(D\nabla u_{i})+\mu u_{i}=0\quad\text{in }\Omega,\\
-\div(D\nabla u_{i}^{0})+\mu^{0}u_{i}^{0}=0\quad\text{in }\Omega,\\
u_{i}=u_{i}^{0}=\phi_{i}\quad\text{on \ensuremath{\bo}}.
\end{array}\right.\label{eq:u_u0}
\end{equation}
The solutions are shown in Figure~\ref{fig:comparison}. Looking
at the first two columns, it is clear that the variations between
$u_{i}$ and $u_{i}^{0}$ are minimal. This is due to the fact that
the two problems have the same diffusion coefficients and small variations
in the absorption coefficients. As we saw in $\S$~\ref{sub:Quantitative-photoacoustic-tomog-gamma=00003D1},
the reconstruction at step (1) cannot be at this level of precision,
and therefore $\mu$ cannot be reconstructed in this simple way. In
order to overcome this difficulty, we make the following observation.

\begin{figure}
\subfloat[$u_{1}$]{\includegraphics[clip,width=0.3\columnwidth]{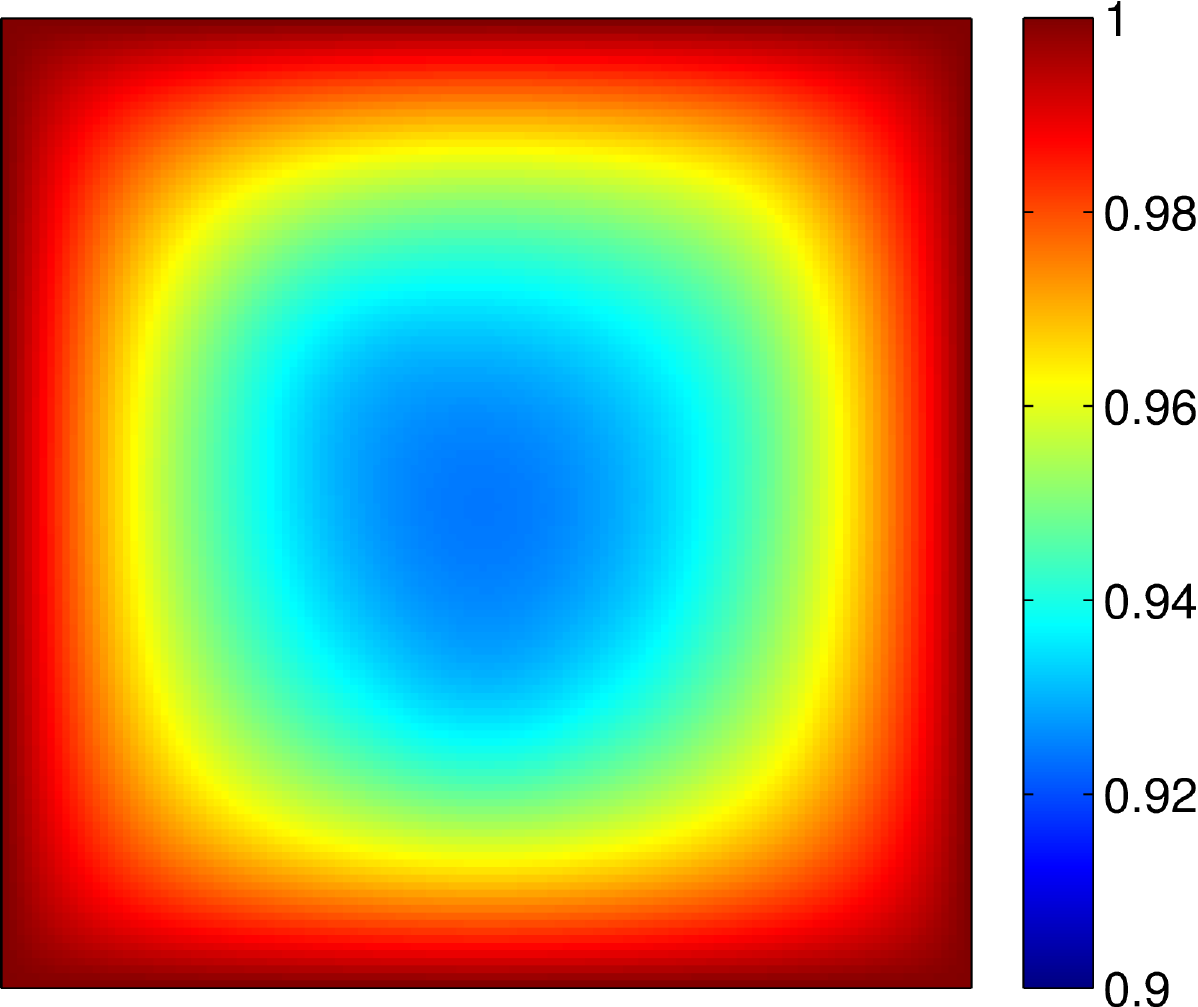}\label{fig:compare_u1}}\quad{}
\subfloat[$u_{1}^{0}$]{\includegraphics[clip,width=0.3\columnwidth]{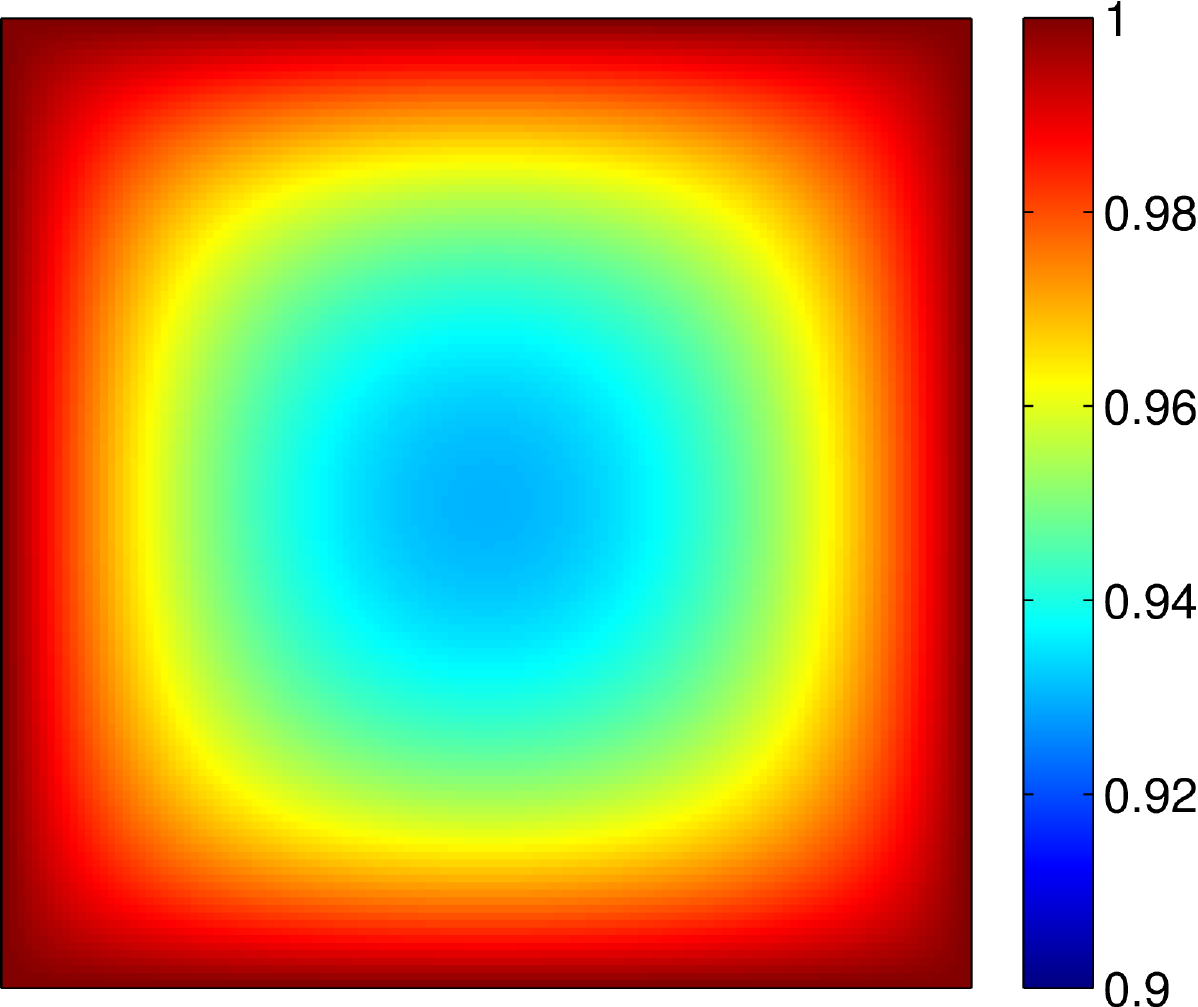}\label{fig:compare_u01}}\quad{}\subfloat[$u_{1}/u_{1}^{0}$]{\includegraphics[clip,width=0.31\columnwidth]{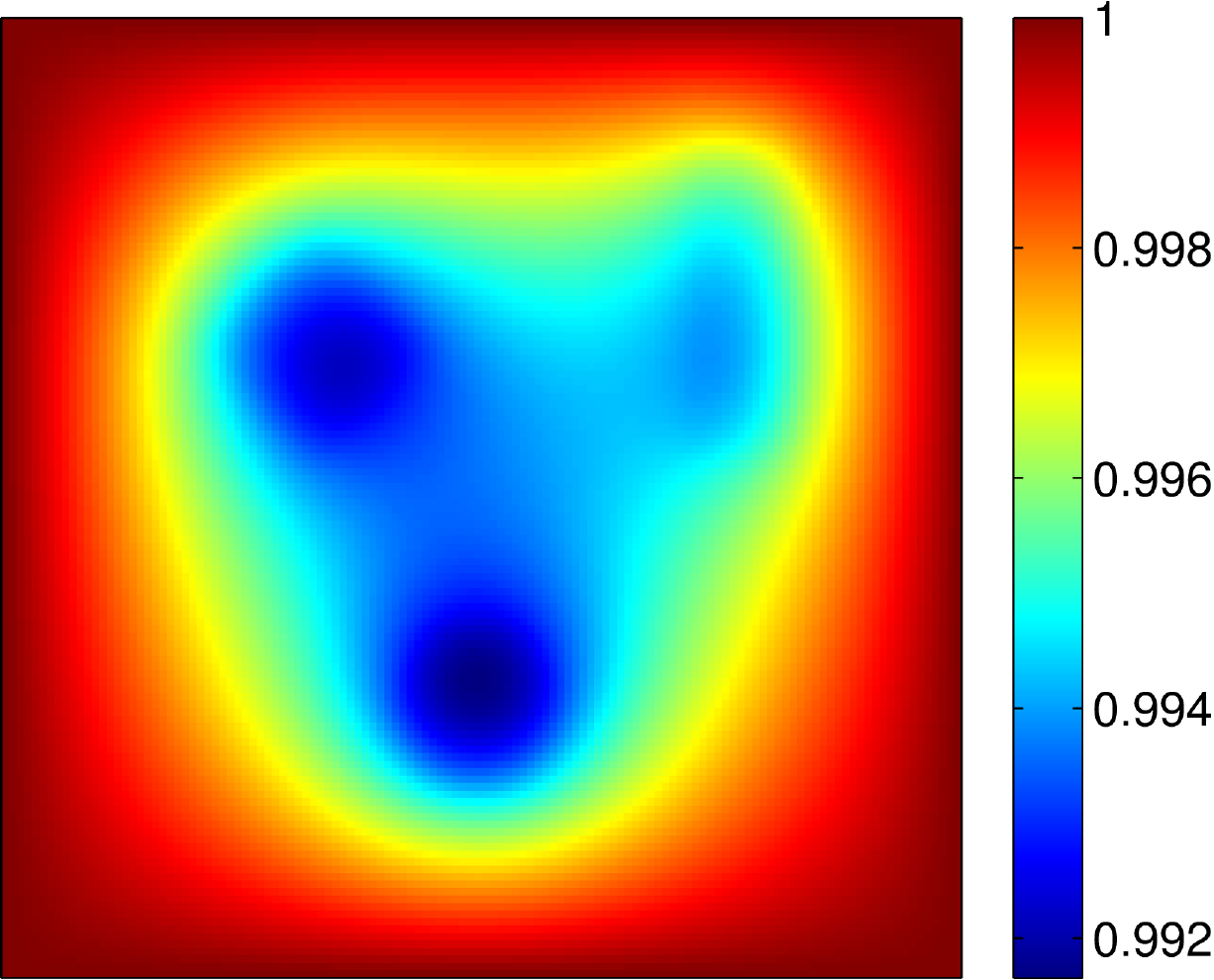}\label{fig:compare_u1/u01}}

\subfloat[$u_{2}$]{\includegraphics[clip,width=0.3\columnwidth]{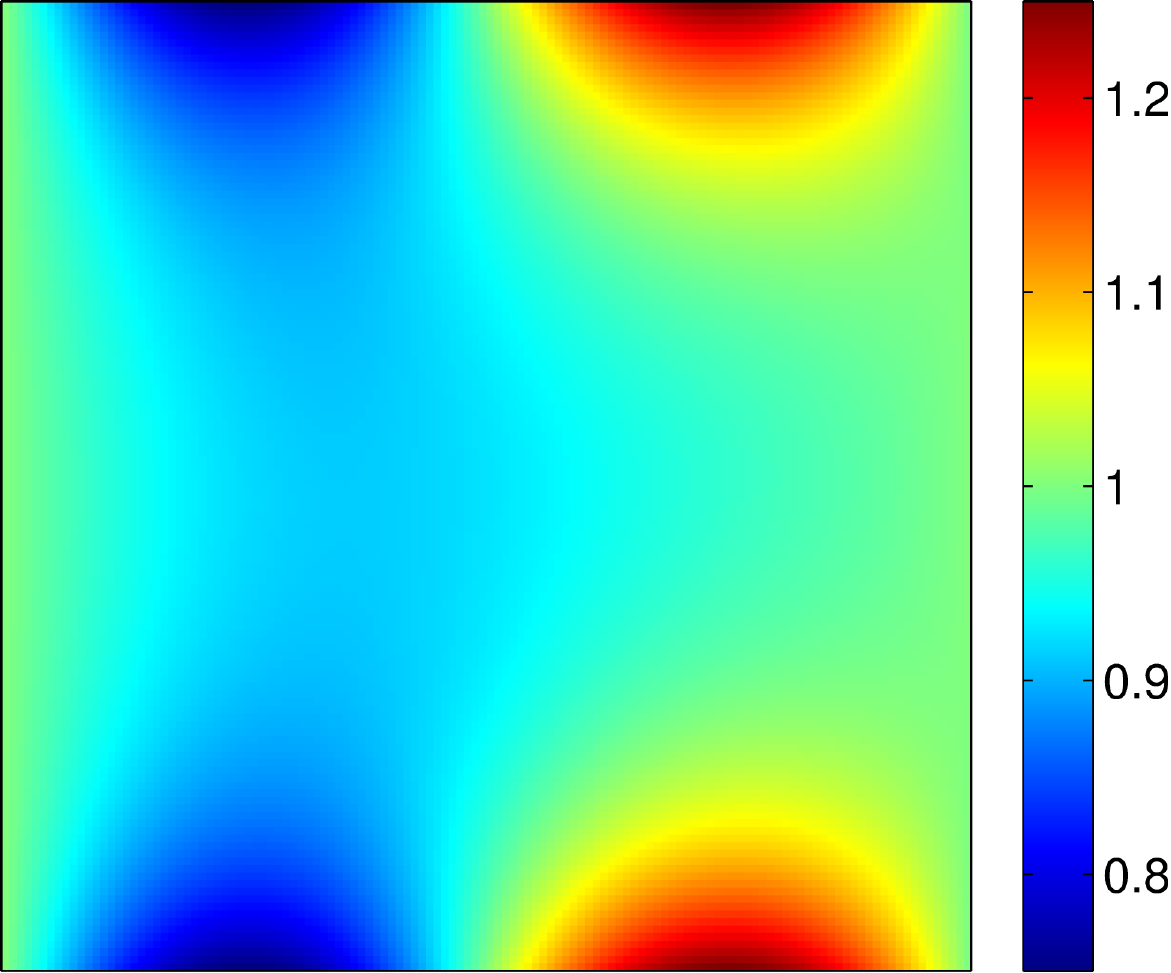}\label{fig:compare_u2}}\quad{}
\subfloat[$u_{2}^{0}$]{\includegraphics[clip,width=0.3\columnwidth]{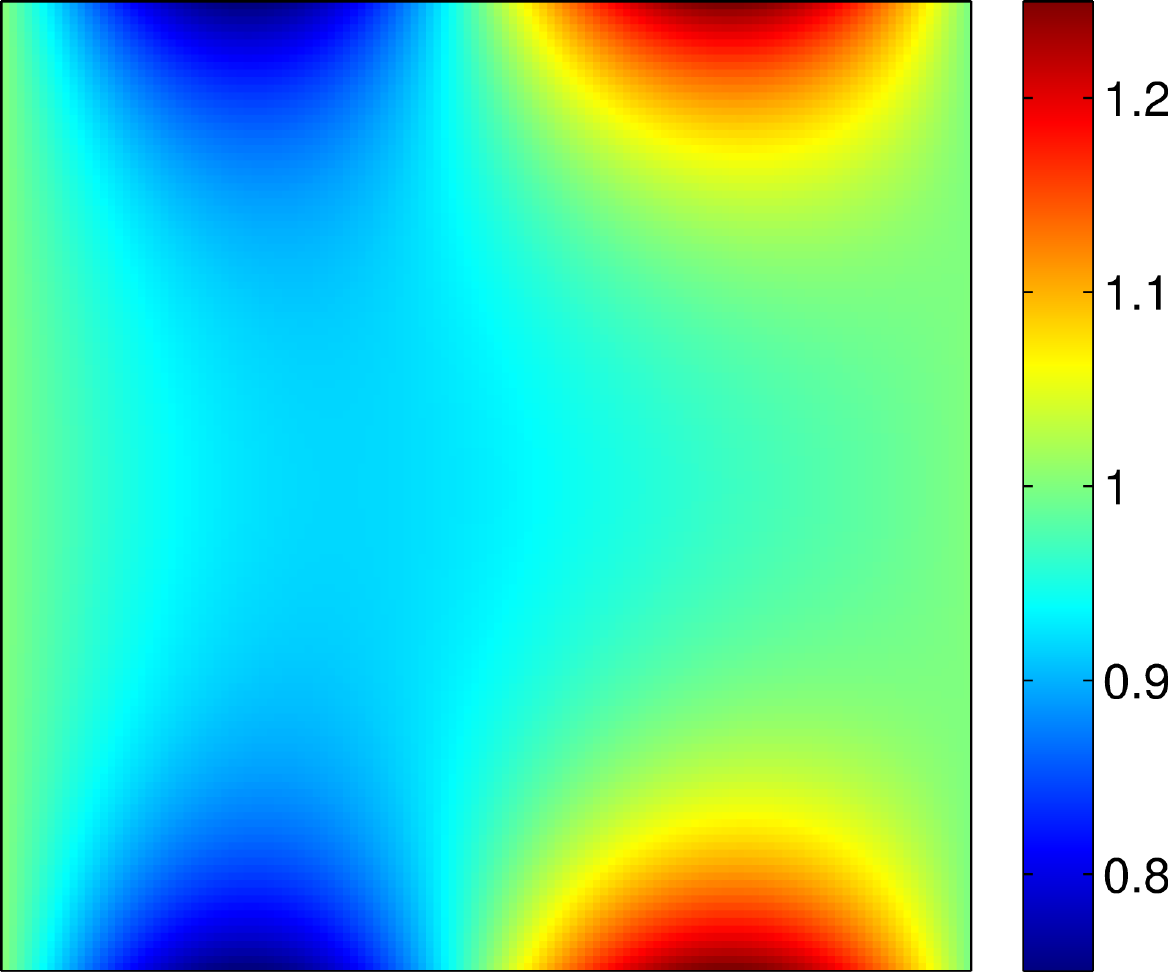}\label{fig:compare_u02}}\quad{}\subfloat[$u_{2}/u_{2}^{0}$]{\includegraphics[clip,width=0.31\columnwidth]{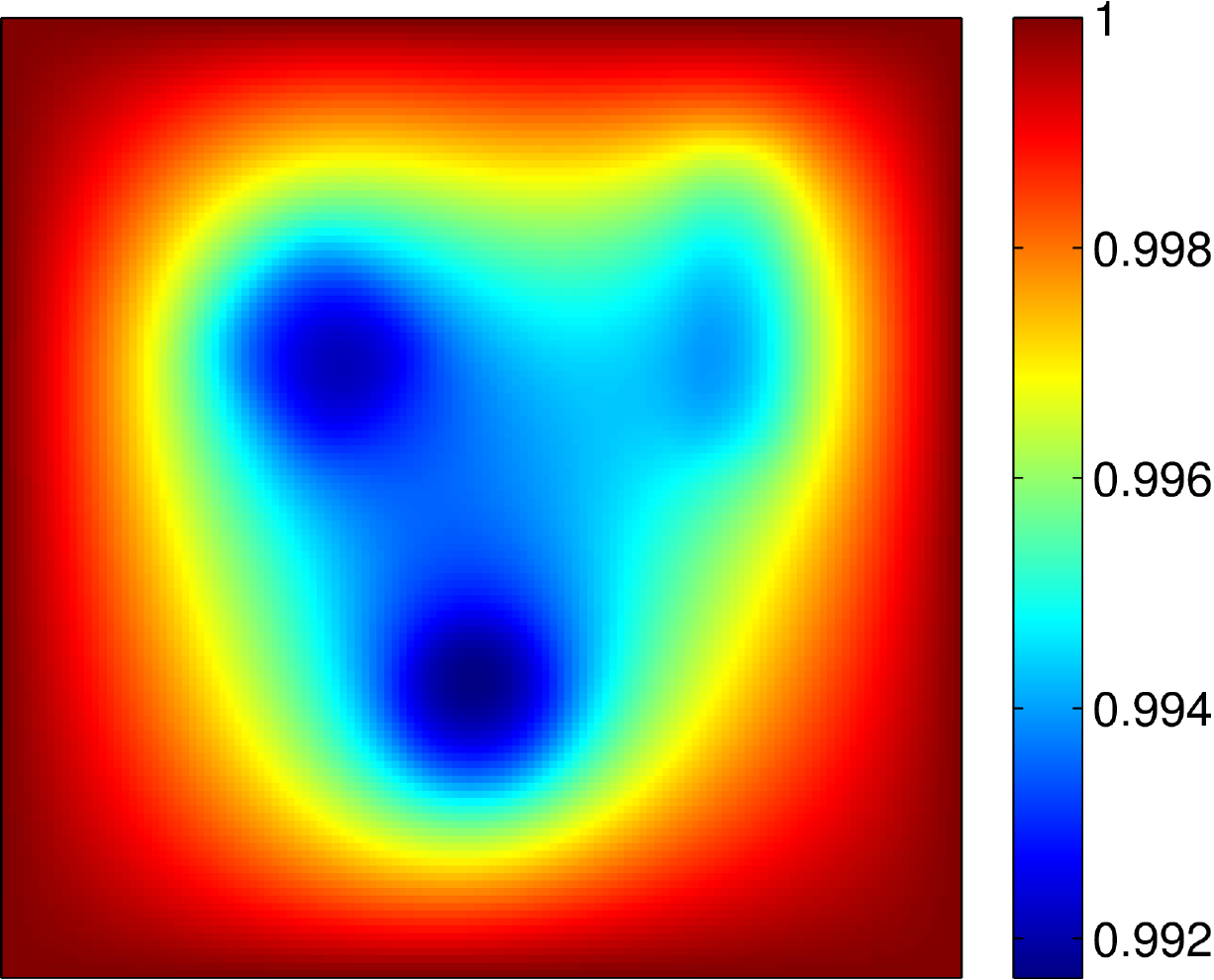}\label{fig:compare_u2/u02}}

\subfloat[$u_{3}$]{\includegraphics[clip,width=0.3\columnwidth]{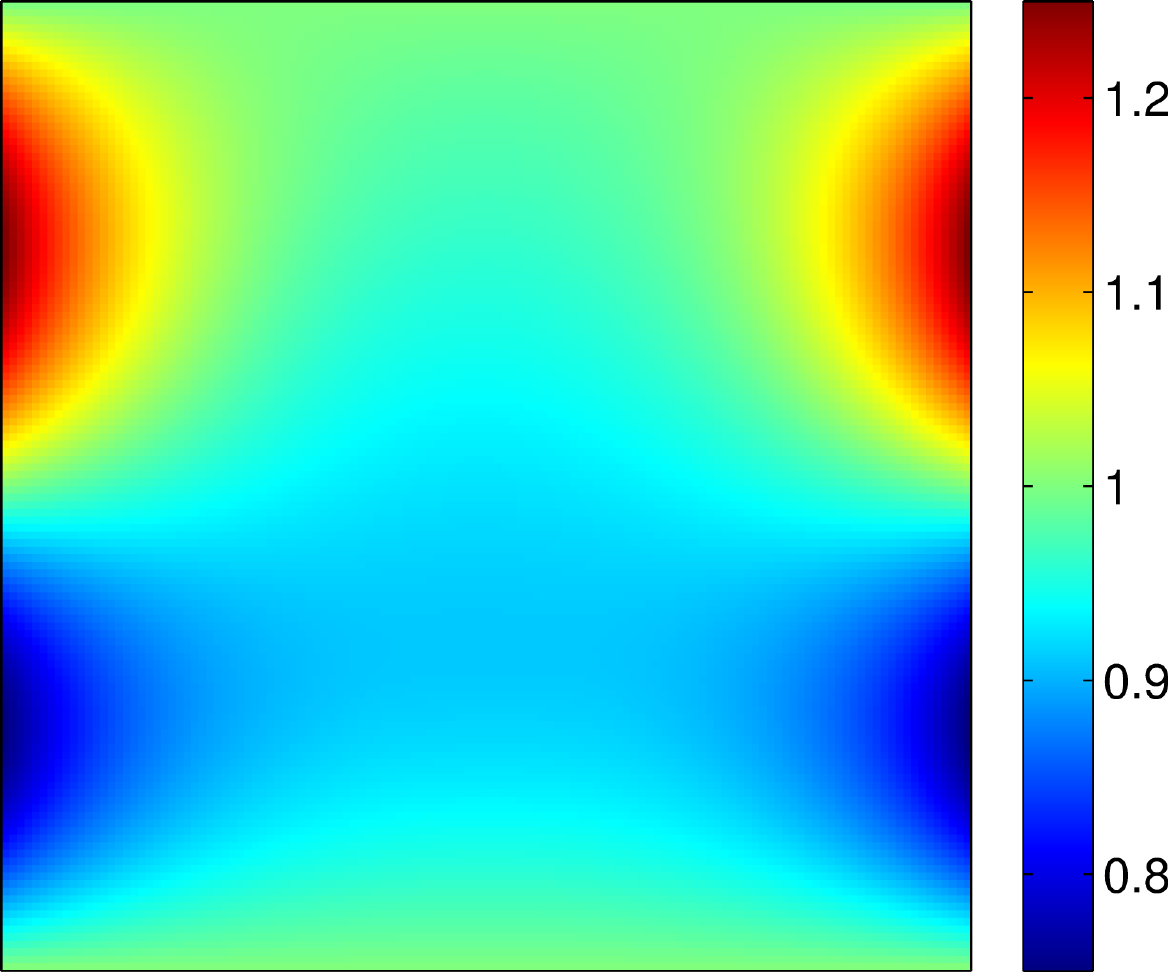}\label{fig:compare_u3}}\quad{}
\subfloat[$u_{3}^{0}$]{\includegraphics[clip,width=0.3\columnwidth]{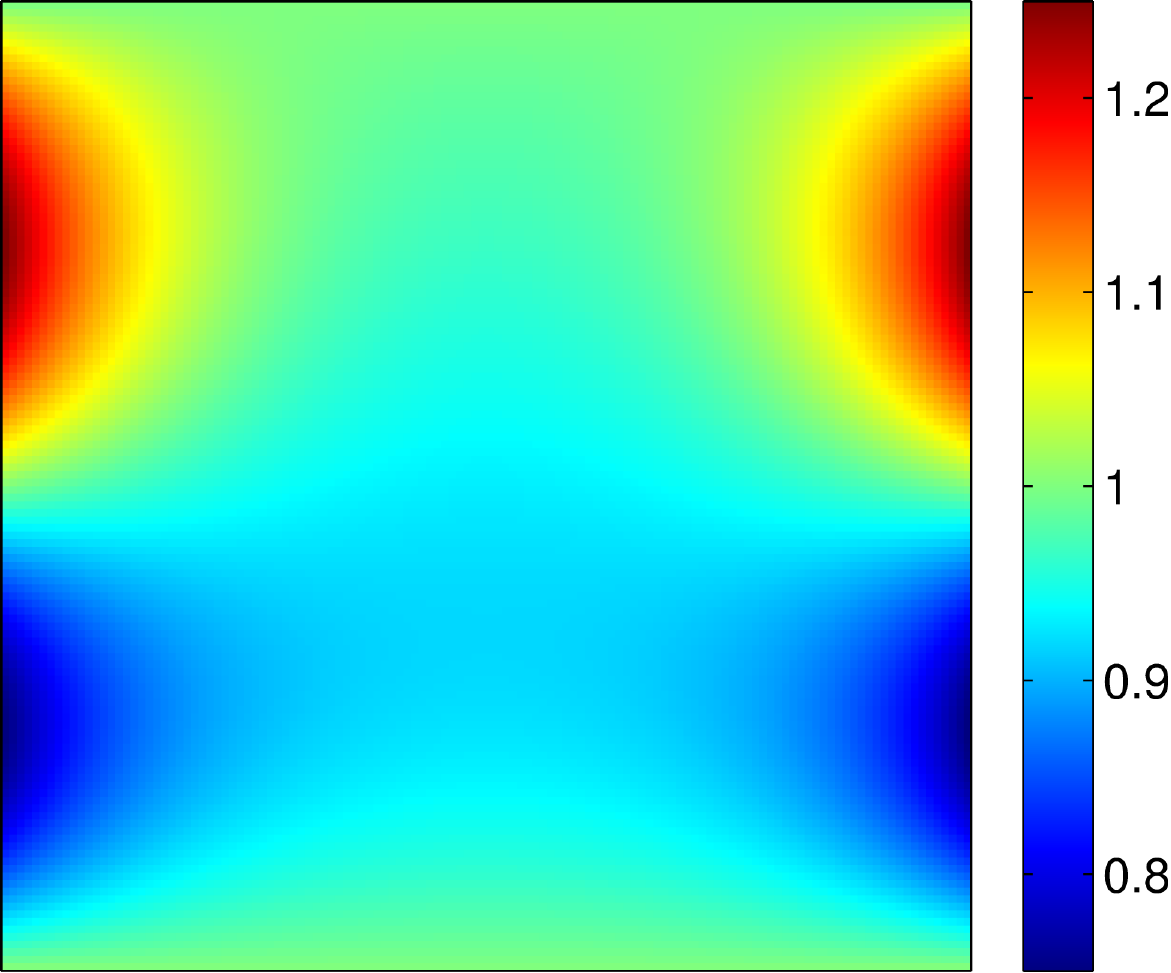}\label{fig:compare_u03}}\quad{}\subfloat[$u_{3}/u_{3}^{0}$]{\includegraphics[clip,width=0.31\columnwidth]{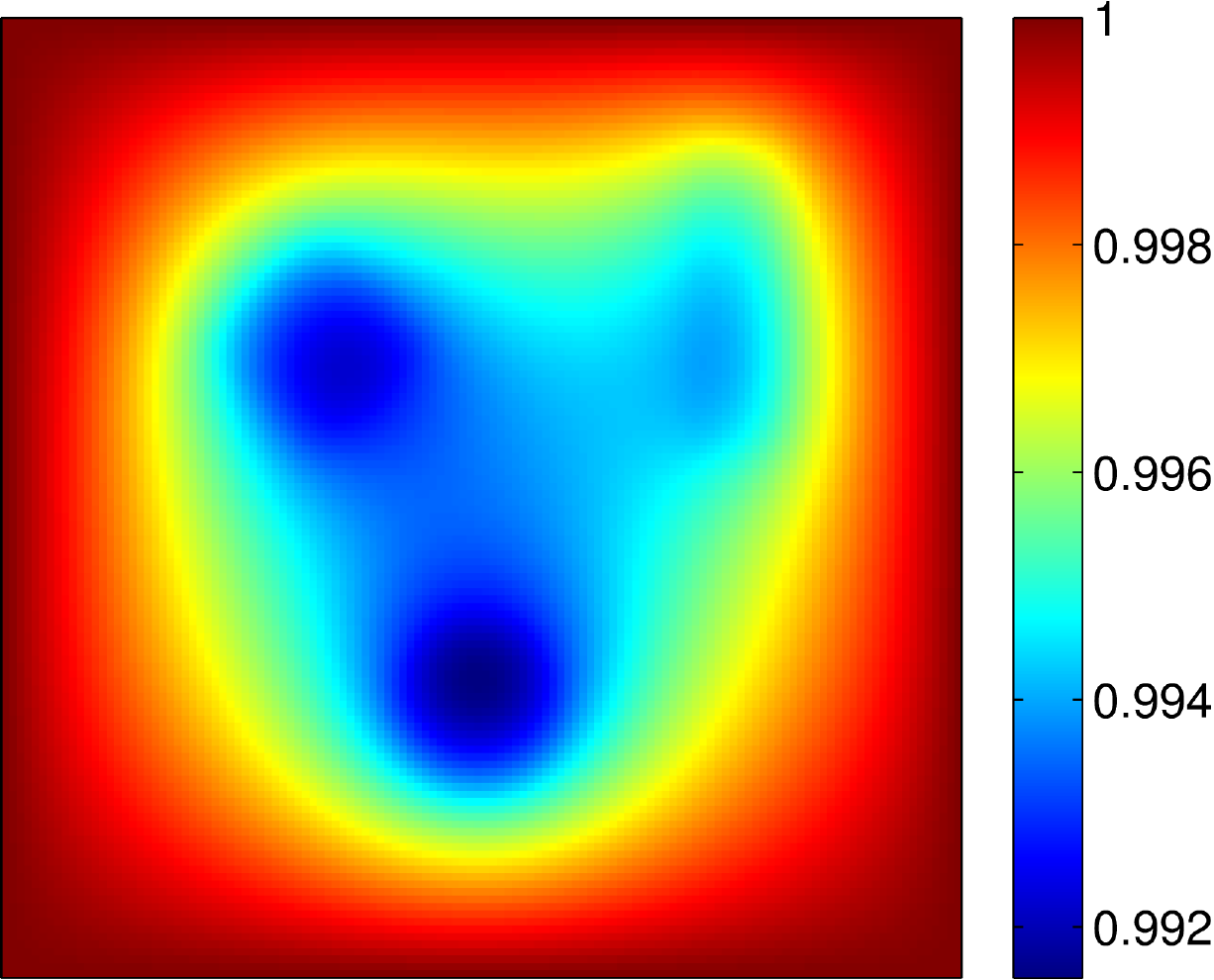}\label{fig:compare_u3/u03}}\caption{The solutions to \ref{eq:u_u0} and their ratios. }
\label{fig:comparison}
\end{figure}

\begin{rem}
\label{rem}The ratios $u_{i}/u_{i}^{0}$ are almost independent of
$i$, provided that $\mu$ is a small variation around a known background
$\mu^{0}$. This is evident from the third column of Figure~\ref{fig:comparison},
and can be proved by arguing as follows. A direct calculation gives
that $v_{i}=u_{i}/u_{i}^{0}$ satisfies
\[
\left\{ \begin{array}{l}
-\div(D\nabla v_{i})+(\mu-\mu^{0})v_{i}=2\frac{\nabla u_{i}^{0}}{u_{i}^{0}}\cdot\left(\frac{\nabla u_{i}}{u_{i}}-\frac{\nabla u_{i}^{0}}{u_{i}^{0}}\right)\quad\text{in }\Omega,\\
v_{i}=1\quad\text{on \ensuremath{\bo}}.
\end{array}\right.
\]
When $\mu$ is close to $\mu^{0}$, the right-hand side of this equation
becomes negligible with respect to the other terms. Thus, $v_{i}$
is substantially independent of $i$.
\end{rem}
Let us now describe the precise reconstruction algorithm based on
these observations. It consists of two initial steps and an iterative
procedure consisting of three more substeps. For simplicity, we shall
discuss only the noise-free case. We suppose that $\mu$ is a small
perturbation around a known coefficient $\mu^{0}$ and that $D$ is
known at one point of the domain.
\begin{enumerate}
\item By using the disjoint sparsity signal separation method applied to
\[
h_{i}=\log(\Gamma\mu)+\log u_{i},\qquad i=1,\dots,N,
\]
a first approximation $u_{i}(0)$ of the solutions $u_{i}$ is reconstructed.
As discussed in Section~\ref{sec:Numerical-implementation}, this can be done
by minimizing
\[
\min_{y\in\R^{m_{f}+Nm_{g}}}\left\Vert y\right\Vert _{0}+\lambda\sum_{i=1}^{N}\bigl\Vert A_{f}y_{f}+A_{g}y_{g}^{i}-h_{i}\bigr\Vert_{2}
\]
with OMP, and then writing $u_{i}(0)=\exp(A_{g}y_{g}^{i})$.
\item By using the computed $u_{i}(0)$ and the PDE
\begin{equation}
-\div(Du_{i}\nabla\frac{u_{j}}{u_{i}})=0\quad\text{in }\Omega\label{eq:D}
\end{equation}
with three suitable measurements, a first approximation $D(0)$ of
the diffusion can be obtained. Indeed, choose three boundary values
$\phi_{1}$, $\phi_{2}$ and $\phi_{3}$ such that
\begin{equation}
\det(\nabla\frac{u_{2}}{u_{1}},\nabla\frac{u_{3}}{u_{1}})>0,\quad\text{in \ensuremath{\Omega}.}\label{eq:alessandrini}
\end{equation}
(This can be easily done in two dimensions \cite{bal-kui-2011}.)
Then the above PDE may be rewritten as
\[
^{t}(\nabla\log D)=-\begin{bmatrix}\div(u_{1}\nabla\frac{u_{2}}{u_{1}})/u_{2} & \div(u_{1}\nabla\frac{u_{3}}{u_{1}})/u_{3}\end{bmatrix}\begin{bmatrix}\nabla\frac{u_{2}}{u_{1}} & \nabla\frac{u_{3}}{u_{1}}\end{bmatrix}^{-1}\quad\text{in \ensuremath{\Omega}},
\]
which can be integrated over $\Omega$ and gives a unique solution
for the diffusion coefficient, since $D$ is known at one point of
the domain. Since the solutions $u_{i}$ are very sensitive to changes
in $D$, we expect this reconstruction to be satisfactory. From the
numerical point of view, an optimal control approach may be applied
to \eqref{eq:D} to find $D$. 
\item We now start the main iterative procedure. Initialize $\mu(0)=\mu^{0}$
and let $u_{i}(0)$ and $D(0)$ be as reconstructed in points (1)
and (2). From $\mu(k)$, $u_{i}(k)$ and $D(k)$, the following iteration
is computed as follows.

\begin{enumerate}
\item Given $D(k)$ and $\mu(k)$, let $u_{i}^{0}(k)$ be the unique solution
to
\[
\left\{ \begin{array}{l}
-\div(D(k)\nabla u_{i}^{0}(k))+\mu(k)u_{i}^{0}(k)=0\quad\text{in }\Omega,\\
u_{i}^{0}(k)=\phi_{i}\quad\text{on \ensuremath{\bo}}.
\end{array}\right.
\]
Since $\phi_{i}$ is known, $u_{i}^{0}(k)$ is a known datum. Therefore
we can measure
\[
h_{i}^{0}=\log(H_{i}/u_{i}^{0}(k))=\log(\Gamma\mu)+\log\frac{u_{i}}{u_{i}^{0}(k)},\qquad i=1,\dots,N.
\]
In view of Remark~\ref{rem}, the quantities $u_{i}/u_{i}^{0}(k)$
are almost independent of $i$. This leads to the minimization of
\begin{multline*}
\qquad\quad\min_{y\in\R^{m_{f}+(N+1)m_{g}}}\left\Vert y\right\Vert _{0}+\lambda_{1}\sum_{i=1}^{N}\bigl\Vert A_{f}y_{f}+A_{g}y_{g}^{i}-h_{i}\bigr\Vert_{2}\\
+\lambda_{2}\sum_{i=1}^{N}\bigl\Vert A_{f}y_{f}+A_{g}y_{g}^{N+1}-h_{i}^{0}\bigr\Vert_{2}
\end{multline*}
with $\lambda_{1}\ll\lambda_{2}$. The second term maintains the incoherence
among the $y_{g}^{i}$'s, on which this disjoint sparsity approach
is based. The third term forces the quantities $u_{i}/u_{i}^{0}(k)$
to be independent of $i$, and numerical evidence shows that this
gives a much better reconstruction than the one performed at point
(1). The multipliers $\lambda_{1}$ and $\lambda_{2}$ may be taken
dependent on $k$. Set $u_{i}(k+1)=u_{i}^{0}(k)\exp(A_{g}y_{g}^{N+1})$.
\item Given $u_{i}(k+1)$, find a better approximation $D(k+1)$ of the
diffusion coefficient by proceeding as in (2).
\item Reconstruct $\mu(k+1)$ via
\begin{equation}
\mu(k+1)=\frac{1}{N}\sum_{i=1}^{N}\frac{\div(D(k+1)\nabla u_{i}(k+1))}{u_{i}(k+1)}\quad\text{in }\Omega.\label{eq:mu(k+1)}
\end{equation}
From the numerical point of view, it may be useful to regularize $u_{i}(k+1)$
and $D(k+1)$ before taking the derivatives. Finally, a TV-regularization
of $\mu(k+1)$ may reduce the accumulated noise.
\end{enumerate}
\end{enumerate}
There is no obvious stopping criterion for this iterative procedure.
However, in the numerical simulations less than five iterations were
sufficient.

In the above algorithm we have assumed for simplicity that the boundary
values $\phi_{i}$ are measurable. However, this is probably not a
necessary conditions, since they may be obtained from point (1) as
$\phi_{i}=u_{i}(0)_{|\bo}$.

\subsubsection{Numerical simulations}

We have tested the algorithm discussed above with the absorption map
$\tilde{\mu}$ considered in $\S$~\ref{subsub:Example-1--} (see
Figure~\ref{fig:mureal}). The same dictionaries considered in $\S$~\ref{sub:Quantitative-photoacoustic-tomog-gamma=00003D1}
are chosen. The light intensities $\tilde{u}_{i}$ are the solutions
of
\[
\left\{ \begin{array}{l}
-\div(\tilde{D}\nabla\tilde{u}_{i})+\tilde{\mu}\tilde{u}_{i}=0\quad\text{in }\Omega,\\
\tilde{u}_{i}=\phi_{i}\quad\text{on \ensuremath{\bo}},
\end{array}\right.
\]
where the diffusion coefficient $\tilde{D}$ is shown in Figure~\ref{fig:Dreal}
and five boundary values are chosen as follows:
\[
\begin{aligned} & \phi_{1}(x)=1,\\
 & \phi_{2}(x)=1-\sin(2\pi x_{1})/8,\\
 & \phi_{3}(x)=1-\sin(2\pi x_{2})/8,\\
 & \phi_{4}(x)=x_{1}/4+7/8,\\
 & \phi_{5}(x)=x_{2}/4+7/8.
\end{aligned}
\]
The internal data take the form
\[
H_{i}=\tilde{\Gamma}\tilde{\mu}\tilde{u}_{i},\qquad i=1,\dots5,
\]
where the Grüneisen parameter is shown in Figure~\ref{fig:gammareal}.
The measurements corresponding to the first three boundary conditions
will be used for the disjoint sparsity signal separation method (with
$N=3$), namely for the steps (1) and (3a) of the above algorithm;
those corresponding to $\phi_{1}$, $\phi_{4}$ and $\phi_{5}$ will
be used in the steps (2) and (3b), in order to satisfy \eqref{eq:alessandrini}.
All measurements are used in the last step (3c).

\begin{figure}
\subfloat[$\tilde{D}$]{\includegraphics[clip,width=0.31\columnwidth]{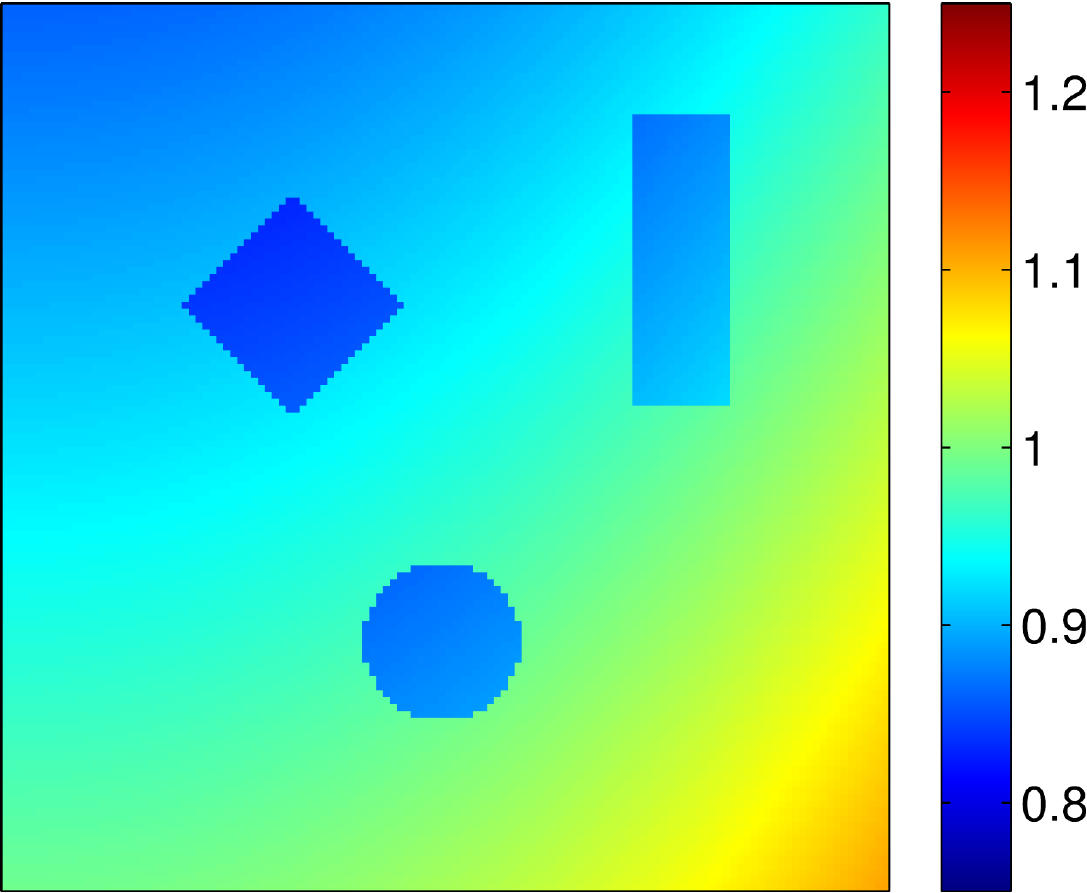}\label{fig:Dreal}}\quad{}
\subfloat[$\tilde{\Gamma}$]{\includegraphics[clip,width=0.31\columnwidth]{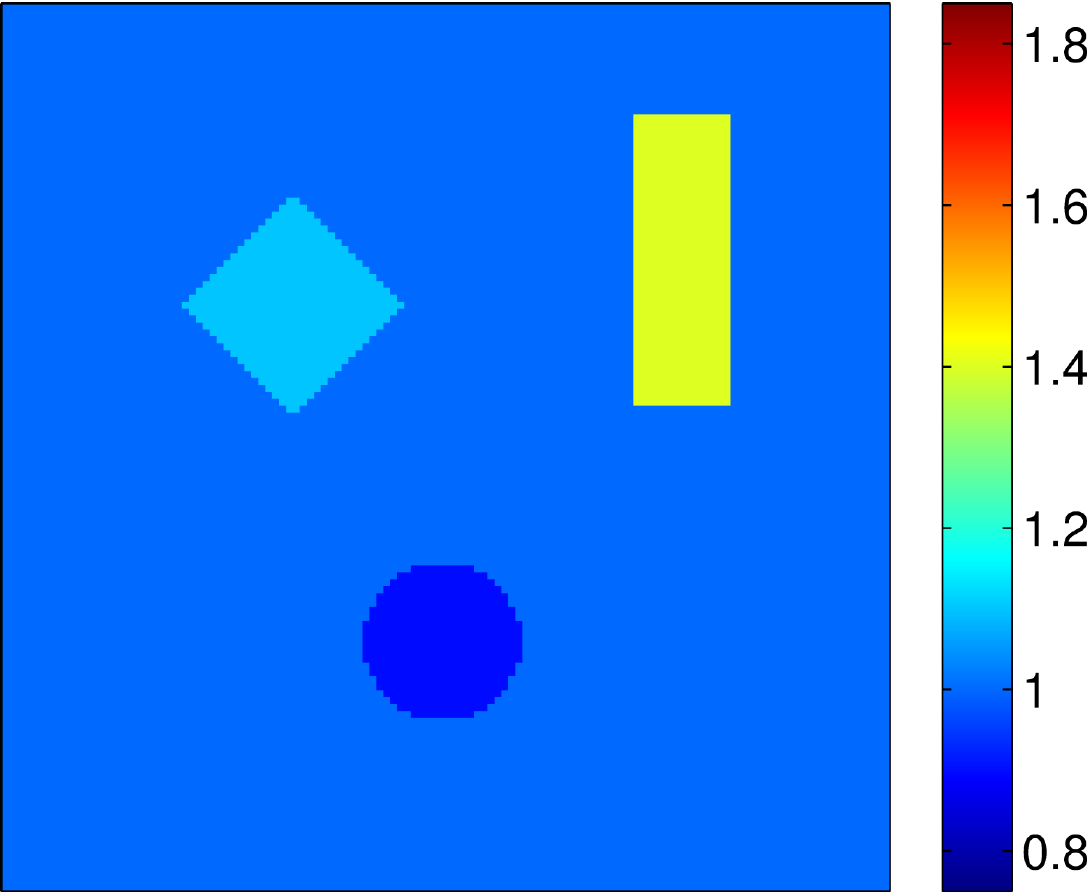}\label{fig:gammareal}}\quad{}\subfloat[$\tilde{\mu}$]{\includegraphics[clip,width=0.31\columnwidth]{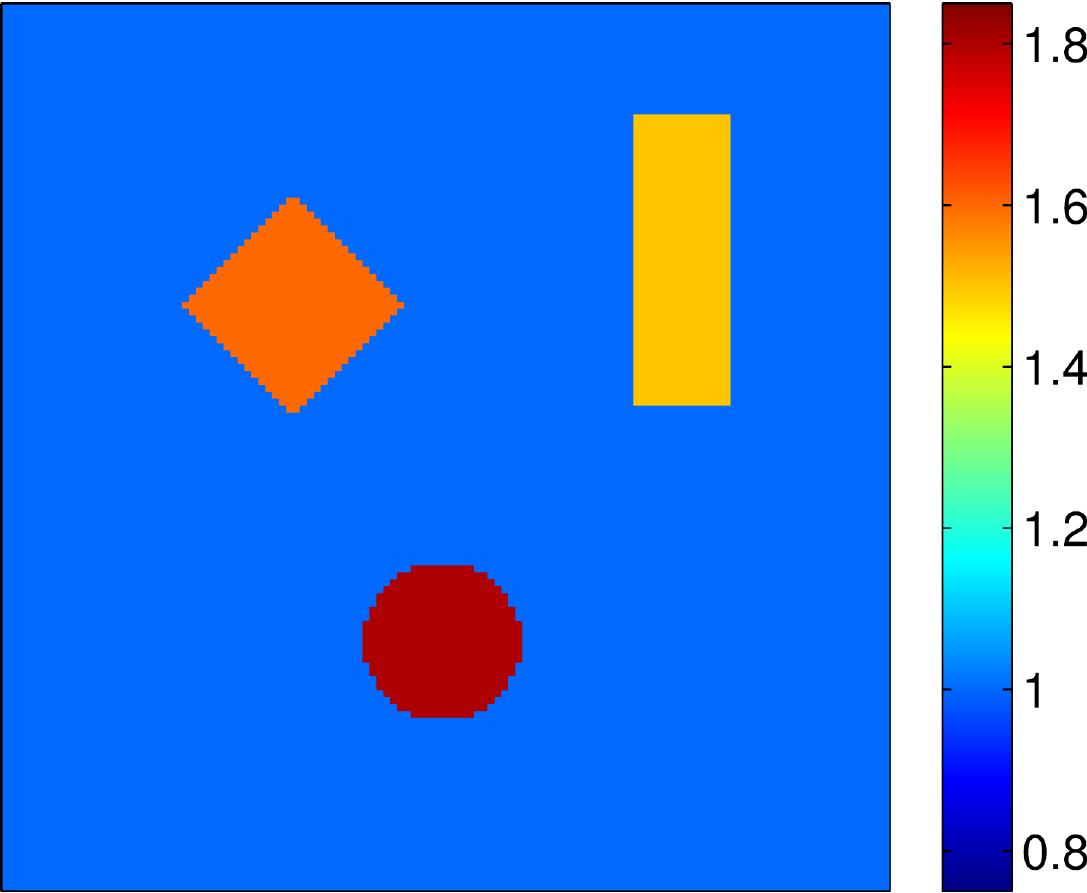}\label{fig:mureal}}

\subfloat[$D$]{\includegraphics[clip,width=0.31\columnwidth]{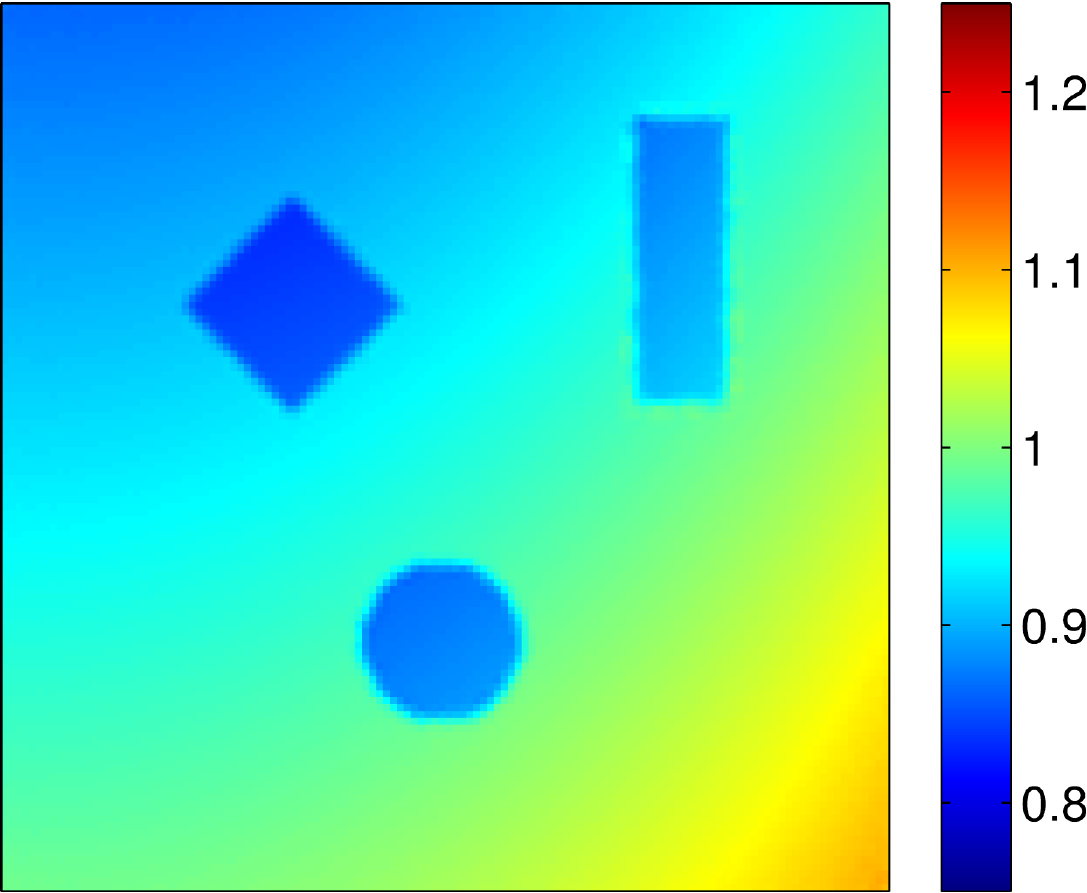}\label{fig:D}}\quad{}
\subfloat[$\mu$]{\includegraphics[clip,width=0.31\columnwidth]{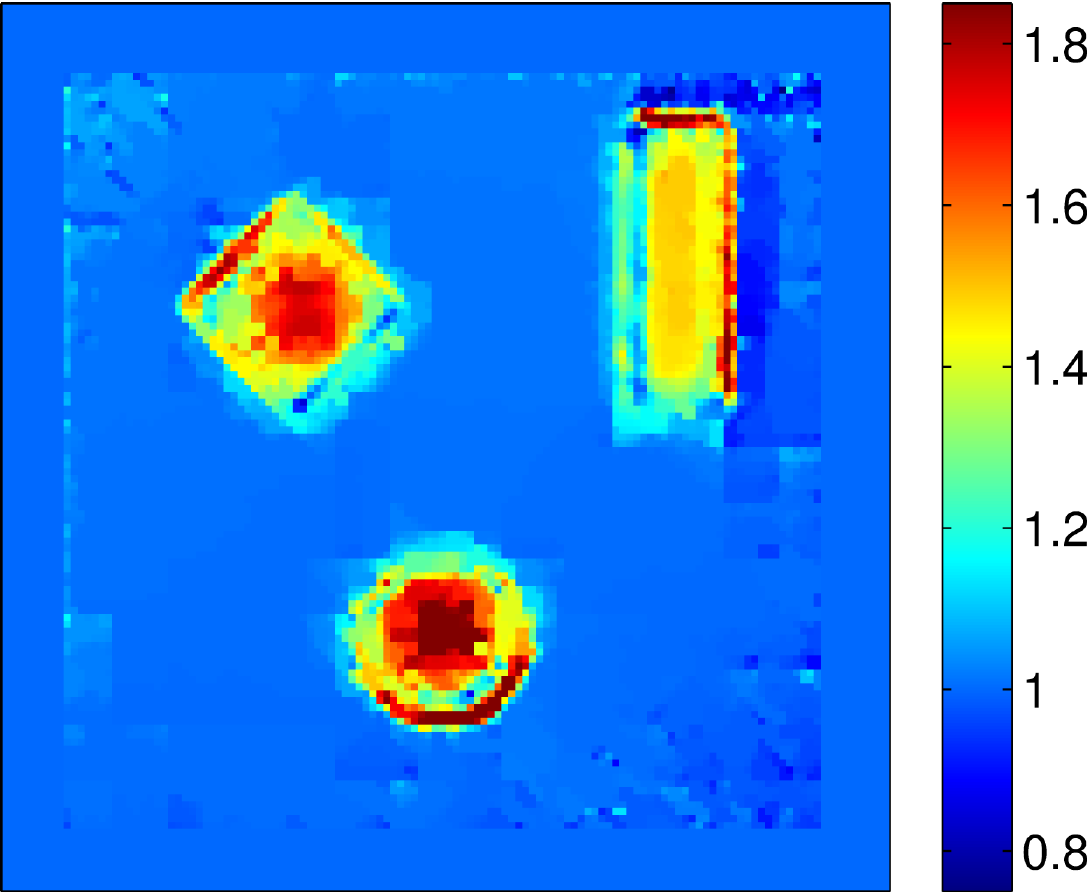}\label{fig:mu_0}}\quad{}\subfloat[$\mu(2)$]{\includegraphics[clip,width=0.31\columnwidth]{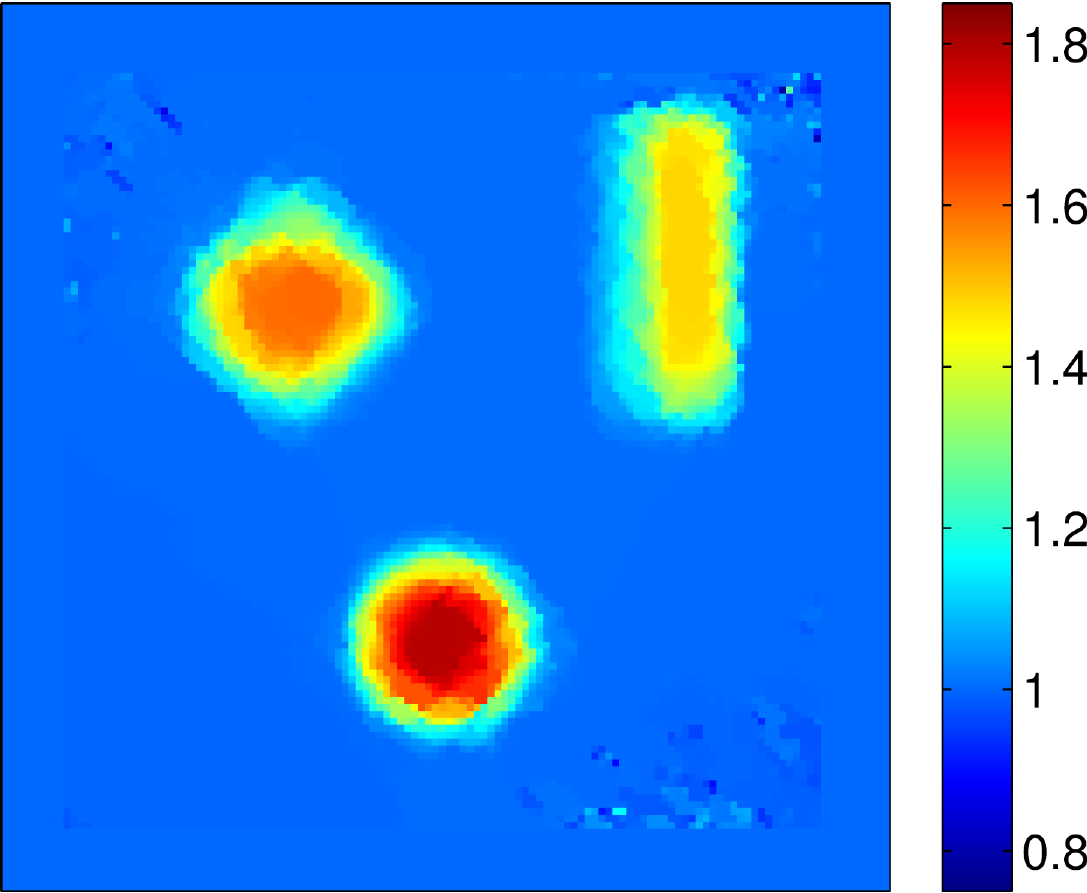}\label{fig:mu_2}}

\caption{The case with variable $\Gamma$.}
\label{fig:gammano1}
\end{figure}
The OMP iterative procedures are stopped after 2000 iterations. If
the absorption $\mu$ were recovered via \eqref{eq:mu(k+1)} immediately
after steps (1) and (2), the reconstruction would not be satisfactory,
as it can be seen in Figure~\ref{fig:mu_0}. This makes steps (3a)
and (3b) necessary: after repeating step (3) twice, the quality of
the reconstruction is sensibly improved (see Figure~\ref{fig:mu_2}).
The corresponding reconstruction of $D$ is shown in Figure~\ref{fig:D}.
As anticipated before, the reconstruction of $D$ from $u_{i}$ is
much more stable than that of $\mu$. Note that the absorption was
supposed to be known near the boundary of the domain, to avoid problems
with the second derivatives in \eqref{eq:mu(k+1)}. 

The case with noise was not studied in this paper, since the robustness
to noise of the disjoint sparsity signal separation algorithm has
already been tested in $\S$~\ref{sub:Quantitative-photoacoustic-tomog-gamma=00003D1}.
The robustness to noise of the other steps of the reconstruction method
discussed above is well-known, and standard regularization method
can be employed. It is worth noticing that since the absorption $\mu$
is found in step (3c) from the reconstructed values of the light
intensities $u_{i}$, the signal to noise ratio of $u_{i}$ has to
be sufficiently large. Unfortunately, as it can be seen from the third
column of Figure~\ref{fig:comparison}, the amplitude of the information
about $\mu$ captured in $u_{i}$ is very small, and so the noise
has to be comparable to this.

\section{\label{sec:Conclusions}Conclusions}

In this work we have studied a method for signal separation based
on the disjoint sparsity of multiple measurements. A theorem giving
unique and stable reconstruction was proved. The result is based on
the incoherence of the measurements. Then, the method was applied
to hybrid imaging problems, and in particular to quantitative photoacoustic
tomography. This technique has been successfully tested on several
numerical simulations, and results to be very robust to noise.

The incoherence between the measurements $g_{i}$ is the main foundation
of the method, and the numerical simulations have shown that such
property holds true with different solutions to the same PDE, which
is the relevant case for hybrid imaging. It would be very interesting
to prove this result in general. Randomly chosen boundary conditions
may give the necessary incoherence.

Robust Principal Component Analysis (rPCA) \cite{2011-candes-rpca} might be used as an alternative to the disjoint separation method for the recovery of $\tilde{f}$ and the $\tilde{g}_i$'s from the knowledge of $h_i=\tilde{f}+\tilde{g}_i$, $i=1,\dots,N$. Writing $\tilde{f}=A_f \tilde{y}_f$ and $\tilde{g}_i=A_g \tilde{y}_g^i$, the known matrix $M:=\,^t\!A_g\begin{bmatrix}
           h_1 & \cdots & h_N 
          \end{bmatrix}$ can be expressed as
\[
M=
\begin{bmatrix}
           ^t\!A_g A_f \tilde{y}_f & \cdots &^t\!A_g A_f \tilde{y}_f 
          \end{bmatrix}
          +
          \begin{bmatrix}
           \tilde{y}_g^1 & \cdots & \tilde{y}_g^N 
          \end{bmatrix},
\]
that is the sum of a rank-one matrix and of a sparse matrix. It would be interesting to see whether the requirements for the application of rPCA are fulfilled. Heuristically, this is indeed the case: the incoherence between $A_f$ and $A_g$ should provide the non-sparsity of the low-rank component, while the incoherence, or possibly the disjoint sparsity, of the $\tilde{g}_i$'s should ensure that the sparse component is not low-rank.

Finally, it would be very interesting to investigate whether the main
ideas behind this method can be applied to other inverse problems
with multiple measurements consisting of two components, of which
only one remains fixed.

\appendix

\section{Uncertainty principles\label{sec:appendix}}

\subsection{Proof of Proposition~\ref{prop:normalized_UP}\label{subsec:appendix}}

Let $A_f$ and $A_g$ be as in Proposition~\ref{prop:normalized_UP}. For $p=1,\dots,n$ and $q\in\R^n$ define
\[
 \xi_p(q)=\max_{1\le\alpha_1<\cdots<\alpha_p\le n} \min_{i=1,\dots,p} |q(\alpha_i)|.
\]
Note that $\xi_p(q)>0$ if and only if $\left\Vert q\right\Vert_0\ge p$: in this sense, the map $\xi$ is a quantitative version of the norm $\left\Vert \cdot\right\Vert_0$. Let $\ceil(z)$ denote the nearest integer greater than or equal to $z$.
\begin{lem}\label{lem:xi}
 Take $\zeta=1,\dots,m_g$ and $p=1,\dots,n$ such that
 \begin{equation*}
  \norm{\,^t\!A_f A_g v}_0\ge p,\qquad v\in\R^{m_g}\setminus\{0\},\; \norm{v}_0\le \zeta.
 \end{equation*}
There exists $C_\xi>0$ depending only on $n$, $A_f$ and $A_g$ such that
 \[
  \xi_p(\,^t\!A_fA_g v)\ge C_\xi,
 \]
 for every $v\in\R^{m_g}$ such that $\left\Vert v \right\Vert_2=1$ and $\left\Vert v \right\Vert_0\le \zeta$.
\end{lem}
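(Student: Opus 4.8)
The plan is to obtain the uniform lower bound from a compactness argument, once $\xi_p$ is recognized as an order statistic. First I would observe that $\xi_p(q)$ is exactly the $p$-th largest value among $|q(1)|,\dots,|q(n)|$: to maximize $\min_i |q(\alpha_i)|$ over a choice of $p$ distinct indices $\alpha_1<\cdots<\alpha_p$, one selects the $p$ indices carrying the largest absolute values, and the minimum over them is then the $p$-th largest. In particular $\xi_p(q)>0$ if and only if $q$ has at least $p$ nonzero entries, i.e.\ $\left\Vert q\right\Vert_0\ge p$, which is precisely the sense in which $\xi_p$ refines $\left\Vert\cdot\right\Vert_0$. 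Moreover, being a finite maximum of finite minima of the continuous maps $q\mapsto|q(\alpha)|$, the function $\xi_p$ is continuous on $\R^n$.

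Next I would introduce the admissible set
\[
K=\{v\in\R^{m_g}:\left\Vert v\right\Vert_2=1,\ \left\Vert v\right\Vert_0\le\zeta\}
\]
and argue that it is compact. Since $\{v:\left\Vert v\right\Vert_0\le\zeta\}$ is the union, over the finitely many index sets $S\subseteq\{1,\dots,m_g\}$ with $\#S\le\zeta$, of the coordinate subspaces $\R^S$, intersecting with the unit sphere exhibits $K$ as a finite union of unit spheres of these subspaces, each of which is closed and bounded hence compact; a finite union of compact sets is compact.

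The core step is then to combine the hypothesis with continuity and compactness. For every $v\in K$ one has $v\neq0$ and $\left\Vert v\right\Vert_0\le\zeta$, so the standing assumption $\left\Vert {}^t\!A_fA_gv\right\Vert_0\ge p$ together with the equivalence above yields $\xi_p({}^t\!A_fA_gv)>0$. Thus the continuous function $v\mapsto\xi_p({}^t\!A_fA_gv)$, a composition of the fixed linear map $v\mapsto{}^t\!A_fA_gv$ with the continuous $\xi_p$, is strictly positive on the compact set $K$ and therefore attains a strictly positive minimum there. Defining $C_\xi$ to be this minimum concludes the proof; since $K$ and the map ${}^t\!A_fA_g$ are determined by the fixed data $n,A_f,A_g$ (and the fixed integers $p,\zeta$), the resulting $C_\xi$ is independent of $v$, as required.

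I do not expect a serious obstacle here: the argument is the standard ``pointwise positivity plus compactness gives a uniform bound'' scheme. The only two points demanding care are the identification of $\xi_p$ as an order statistic, which produces the clean equivalence $\xi_p(q)>0\iff\left\Vert q\right\Vert_0\ge p$ driving the pointwise positivity, and the verification that the constraint set $K$ is compact despite the nonconvexity of the $\ell^0$ restriction; both are settled by the finite-union-over-supports observation.
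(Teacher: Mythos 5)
Your proof is correct and is essentially the paper's argument in a different packaging: the paper proves the same uniform bound by contradiction, taking a sequence $v_l$ with $\xi_p({}^t\!A_fA_gv_l)\to 0$, extracting a convergent subsequence on the compact sparse unit sphere, and contradicting the pointwise positivity $\|{}^t\!A_fA_gv\|_0\ge p$ at the limit, which is exactly your ``continuous positive function on a compact set attains a positive minimum'' scheme phrased sequentially. Your explicit identification of $\xi_p$ as an order statistic and of the constraint set as a finite union of unit spheres of coordinate subspaces makes the continuity and compactness visible, but no new idea is involved.
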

\begin{proof}
 By contradiction, there exist $v_l\in\R^{m_g}$ such that $\left\Vert v_l \right\Vert_2=1$, $\left\Vert v_l \right\Vert_0\le \zeta$ and $\xi_p(q_l)\to 0$ as $l\to\infty$, where $q_l=\,^t\!A_fA_g v_l$. Up to a subsequence, we have $v_l\to v$ for some $v\in\R^{m_g}$ such that $\left\Vert v \right\Vert_2=1$ and $\left\Vert v \right\Vert_0\le \zeta$ and $q_l\to\,^t\!A_fA_g v=:q$. By assumption we have  $\left\Vert q \right\Vert_0\ge p$. Thus, there exist $1\le\alpha_1<\cdots<\alpha_p\le n$ and $\epsilon>0$ such that $|q(\alpha_i)|\ge 2\epsilon$ for every $i=1,\dots,p$. Since $q_l\to q$, there exists $l_0$ such that $|q_l(\alpha_i)|\ge \epsilon$ for every $i=1,\dots,p$ and every $l\ge l_0$. Hence $\xi_p(q_l)\ge \epsilon$ for every  $l\ge l_0$, a contradiction.
\end{proof}
We are now in a position to prove Proposition~\ref{prop:normalized_UP}.
\begin{proof}[Proof of Proposition~\ref{prop:normalized_UP}]
 Define $D=(\sqrt{m_g}+\frac{2}{3})(1+C_\xi^{-1})$ and take $q\in\R^n$ such that $\left\Vert A_f q \right\Vert_2 > D$ and  $\left\Vert ^t\!A_g^\perp A_f q\right\Vert _{2}\le \frac{2}{3}$. Write $^t\!A_gA_f q=v+r$ where
 \[
  v(\alpha)=\begin{cases}
               ^t\!A_gA_f q(\alpha) & \text{if $|^t\!A_gA_f q(\alpha)|\ge 1$,}\\
               0 & \text{otherwise.}
              \end{cases}
 \]
 Thus $\left\Vert r \right\Vert_\infty\le 1$, whence $\left\Vert r \right\Vert_2 \le \sqrt{m_g}$. Therefore, by the estimate
 \[
 D<  \left\Vert A_f q \right\Vert_2=(\left\Vert ^t\!A_g A_f q \right\Vert_2^2+\left\Vert ^t\!A_g^\perp  A_f q \right\Vert_2^2)^\frac{1}{2}\le \left\Vert ^t\!A_g A_f q \right\Vert_2 + \frac{2}{3},
 \]
and by construction of $D$ we obtain
\begin{equation}\label{eq:v0}
  \left\Vert v \right\Vert_2\ge  \left\Vert  ^t\!A_g A_f q \right\Vert_2 - \left\Vert  r \right\Vert_2>D-( \sqrt{m_g}+\frac{2}{3})=(\sqrt{m_g}+\frac{2}{3})C_\xi^{-1}>0.
\end{equation}
Set $v':=v/\left\Vert v \right\Vert_2$, $\zeta:=\left\Vert v' \right\Vert_0$ and $p=\ceil(2/M)-\zeta$. By Lemma~\ref{lem:xi}, whose assumptions are satisfied by Proposition~\ref{prop:uncertainty} (using that $A_f\,^t\!A_f=I$),  we have $\xi_p(\,^t\!A_fA_g v')\ge C_\xi$. Thus, there exist $1\le\alpha_1<\cdots<\alpha_p\le n$ such that
\begin{equation}\label{eq:xi}
 |^t\!A_fA_g v'(\alpha_i)|\ge C_\xi,\qquad \text{for every $i=1,\dots,p$.}
\end{equation}
From the identity $A_f q=A_g(\,^t\!A_gA_f q)+A_g^\perp(\,^t\!A_g^\perp A_f q)$, setting $z=\,^t\!A_f(A_g r +A_g^\perp(\,^t\!A_g^\perp A_f q))$ we obtain
\[
 q=(^t\!A_fA_g v')\left\Vert v \right\Vert_2 +z.
\]
In view of Lemma~\ref{lem:properties} parts 1 and 2 we have
\[
 \left\Vert z \right\Vert_\infty\le\left\Vert z \right\Vert_2\le \left\Vert A_g r +A_g^\perp(\,^t\!A_g^\perp A_f q) \right\Vert_2\le \left\Vert r \right\Vert_2 + \left\Vert ^t\!A_g^\perp A_f q \right\Vert_2\le \sqrt{m_g} +\frac{2}{3}.
\]
As a result, by \eqref{eq:xi} and  \eqref{eq:v0} and the expression for $D$ we have
\[
 |q(\alpha_i)|=|\left\Vert v \right\Vert_2 (^t\!A_fA_g v')(\alpha_i)+z(\alpha_i)|\ge \left\Vert v \right\Vert_2 C_\xi-( \sqrt{m_g}+\frac{2}{3})>0
\]
for every $i=1,\dots,p$. Therefore
\[
 \left\Vert q \right\Vert_0 +\left\Vert v \right\Vert_0 \ge p+\zeta\ge 2/M.
\]
Finally, the conclusion follows form the equality $\#\{\alpha:|(^t\!A_{g}A_f q)(\alpha)|\ge 1\}=\left\Vert v \right\Vert_0$.
\end{proof}

\subsection{Proof of Proposition~\ref{prop:new_uncertainty}\label{subsec:appendix2}}

Let us first discuss the orthobasis of 2D Haar wavelets of $\R^{d\times d}$, where $d=2^J$. $A_f$ is constructed via translations and dilations of four types of wavelets, as we now describe. Let $j=1,\dots,J-1$ denote the scale, from the finest to the coarsest and let $k_1,k_2=1,\dots,2^{J-j}$ be the horizontal and vertical translation parameters. We consider four families of atoms $\psi^i_{j,k}$ defined by
\begin{align*}
&\left\{\begin{array}{l}
         \psi^1_{j,k}(2^j(k_1-1)+\alpha_1,2^j(k_2-1)+\alpha_2)=-2^{-j},\\
         \psi^1_{j,k}(2^j(k_1-1)+\alpha_1,2^j(k_2-1)+2^{j-1}+\alpha_2)=2^{-j},
        \end{array}
\right.
\quad
&\begin{array}{l}
         \alpha_1=1,\dots,2^j,\\
         \alpha_2=1,\dots,2^{j-1},
        \end{array}
\\
&\left\{\begin{array}{l}
         \psi^2_{j,k}(2^j(k_1-1)+\alpha_1,2^j(k_2-1)+\alpha_2)=-2^{-j},\\
         \psi^2_{j,k}(2^j(k_1-1)+2^{j-1}+\alpha_1,2^j(k_2-1)+\alpha_2)=2^{-j},
        \end{array}
\right.
\quad
&\begin{array}{l}
         \alpha_1=1,\dots,2^{j-1},\\
         \alpha_2=1,\dots,2^{j},
        \end{array}
        \\
&\left\{\begin{array}{l}
         \psi^3_{j,k}(2^j(k_1-1)+2^{j-1}+\alpha_1,2^j(k_2-1)+\alpha_2)=-2^{-j},\\
         \psi^3_{j,k}(2^j(k_1-1)+\alpha_1,2^j(k_2-1)+2^{j-1}+\alpha_2)=-2^{-j},\\
         \psi^3_{j,k}(2^j(k_1-1)+\alpha_1,2^j(k_2-1)+\alpha_2)=2^{-j},\\
         \psi^3_{j,k}(2^j(k_1-1)\!+2^{j-1}\!+\!\alpha_1,2^j(k_2-1)\!+2^{j-1}\!+\!\alpha_2)=2^{-j},
        \end{array}
\right.\!\!\!\!
&\begin{array}{l}
         \alpha_1=1,\dots,2^{j-1},\\
         \alpha_2=1,\dots,2^{j-1},
        \end{array}
        \\
  &\quad\;\;\psi^4_{j,k}(2^j(k_1-1)+\alpha_1,2^j(k_2-1)+\alpha_2)=2^{-j},
  & \alpha_1,\alpha_2=1,\dots,2^{j},
\end{align*}
and zero elsewhere. The orthonormal basis of Haar wavelets $A_f$ is given by
\[
 \bigcup_{j=1}^{J-2}\bigl\{\psi_{j,k}^i: i=1,2,3,\, k\in\{1,\dots,2^{J-j}\}^2\bigr\}\cup
 \bigl\{\psi_{J-1,k}^i:i=1,\dots,4,\, k\in\{1,2\}^2\bigr\}.
\]
The proof of Proposition~\ref{prop:new_uncertainty} is based on the following result.
\begin{lem}\label{lem:new_uncertainty}
Under the assumptions of Proposition~\ref{prop:new_uncertainty}, for every $v\in\R^{m_g}\setminus\{0\}$ we have
\[
 \norm{\,^t\!A_f A_g v}_0\ge \sum_{j=1}^{J-B-1} (2^{J-j}-2L)^2
\]
\end{lem}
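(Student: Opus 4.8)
The plan is to exploit the tensor-product structure of the Haar system. Set $w:=A_gv$, so that $^t\!A_fA_gv$ is the vector of Haar coefficients of the nonzero real trigonometric polynomial $w$, whose constituent sinusoids in~\eqref{eq:realsinusoids} all have frequencies $l_1,l_2\in\{0,\dots,L\}$ with $(l_1,l_2)\neq(0,0)$; thus $\norm{^t\!A_fA_gv}_0$ counts the nonzero Haar coefficients of $w$. The three wavelet families are separable: denoting by $h_{j,k}$ and $\phi_{j,k}$ the one-dimensional Haar wavelet and scaling function at scale $j$ and position $k$, one has $\psi^1_{j,k}=\phi_{j,k_1}\otimes h_{j,k_2}$, $\psi^2_{j,k}=h_{j,k_1}\otimes\phi_{j,k_2}$ and $\psi^3_{j,k}=h_{j,k_1}\otimes h_{j,k_2}$. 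I would prove that at every scale $j\in\{1,\dots,J-B-1\}$ at least one wavelet type already produces $(2^{J-j}-2L)^2$ nonzero coefficients; since coefficients at distinct scales occupy disjoint entries of $^t\!A_fA_gv$, summing over $j$ then yields the bound.

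First I would carry out the one-dimensional computation behind everything. For a discrete sinusoid $s_l\in\{\sin(2\pi l\,\cdot/d),\cos(2\pi l\,\cdot/d)\}$, the coefficients $(s_l,h_{j,k})_2$ and $(s_l,\phi_{j,k})_2$ are evaluated in closed form by summing Dirichlet series over each dyadic block. The outcome I expect is that, viewed as a function of the position $k\in\{1,\dots,2^{J-j}\}$, each coefficient is again a real sinusoid of the same frequency $l$ on the coarse grid of size $2^{J-j}$, with an amplitude assembled from Dirichlet factors such as $\sin(2^{j-1}\pi l/d)$ and $\sin(2^{j}\pi l/d)/\sin(\pi l/d)$. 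The hypotheses enter precisely in the non-vanishing of these amplitudes: for $1\le l\le L$ and $j\le J-B-1$ one has $l<2^{B}\le 2^{J-j-1}$, so $0<l<2^{J-j}<2^{J-j+1}$ and hence $2^{J-j}\nmid l$ and $2^{J-j+1}\nmid l$, which makes all the relevant sines nonzero. Consequently the scaling-coefficient amplitude $A^\phi_{j,l}$ is nonzero for every $0\le l\le L$, while the wavelet-coefficient amplitude $A^h_{j,l}$ is nonzero for $1\le l\le L$ and vanishes only at $l=0$ (the single vanishing moment of the Haar wavelet).

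By bilinearity over the frequency content of $w$, each scale-$j$ coefficient array $c^{(i)}_j$, $i=1,2,3$, is then a real trigonometric polynomial of bidegree $\le(L,L)$ on the grid $\{1,\dots,2^{J-j}\}^2$, in which the frequency-$(l_1,l_2)$ part of $w$ is multiplied by $A^\phi_{j,l_1}A^h_{j,l_2}$, $A^h_{j,l_1}A^\phi_{j,l_2}$ or $A^h_{j,l_1}A^h_{j,l_2}$ respectively. In particular, up to an invertible frequency-wise transformation, $c^{(1)}_j$ reproduces exactly the part of $w$ with $l_2\neq0$ and $c^{(2)}_j$ the part with $l_1\neq0$. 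Since $w$ is nonzero and non-constant, it carries a frequency with $l_1\neq0$ or $l_2\neq0$, so for each $j\le J-B-1$ at least one of $c^{(1)}_j$, $c^{(2)}_j$ is not identically zero. It remains to count: a nonzero real trigonometric polynomial of bidegree $\le(L,L)$ on an $N\times N$ grid with $N>2L$ has at least $(N-2L)^2$ nonzero values. I would prove this row by row — each non-identically-zero row is a one-dimensional trig polynomial of degree $\le L$, hence has at most $2L$ zeros, while at most $2L$ rows can vanish identically, since otherwise a nonzero one-dimensional trig polynomial in the row index would have more than $2L$ zeros. Applying this with $N=2^{J-j}$, which satisfies $2L<2^{B+1}\le 2^{J-j}$, yields $(2^{J-j}-2L)^2$ nonzero coefficients at scale $j$, and summation over the disjoint scales completes the proof.

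The genuinely computational heart, and the main obstacle, is the closed-form evaluation of the one-dimensional Haar and scaling coefficients of a sinusoid together with the verification that their amplitudes do not vanish for $1\le l\le L$ and $j\le J-B-1$; this is exactly where $L<2^B$ and $B\le J-2$ are used, and where the piecewise-constant, one-vanishing-moment structure of the Haar system is indispensable, since smoother wavelets would contribute extra amplitude factors liable to vanish (as the paper already anticipates). A smaller but necessary subtlety is the treatment of the degenerate frequencies $l_1=0$ or $l_2=0$, which are annihilated by $\psi^3$ and are precisely the reason both types $\psi^1$ and $\psi^2$ must be kept available.
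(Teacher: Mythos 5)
Your proposal is correct and takes essentially the same route as the paper's own proof: both evaluate the scale-$j$ Haar coefficients of the exponential components of $A_gv$ via geometric sums, use $L<2^B$ and $j\le J-B-1$ to show the resulting amplitude factors do not vanish (except at the degenerate frequency killed by the wavelet's vanishing moment), and then obtain $(2^{J-j}-2L)^2$ nonzero coefficients per scale by the ``at most $2L$ zeros of a nontrivial degree-$2L$ polynomial'' argument applied first in $k_1$ and then in $k_2$. The only differences are organizational: you modularize through the 1D tensor-product structure and a counting lemma for trigonometric polynomials on grids, and you handle the $l_1\neq 0$ versus $l_2\neq 0$ dichotomy by keeping both $\psi^1$ and $\psi^2$ available, whereas the paper fixes a frequency $l^*$ with $l_2^*\neq 0$ without loss of generality and works with $\psi^1$ alone.
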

\begin{proof}
 Write $g=A_g v$. By construction of $A_g$, the vector $g$ can be written as a linear combination of low frequency real sinusoids, namely
\[
  g=\sum_{i=1}^4 \sum_{l_1,l_2=0}^L \gamma_l^i \chi_l^i
\]
for some weights $\gamma_l^i\in\R$.  Using standard trigonometric equalities, we can write the above sum in terms of complex sinusoids
\[
 g(\alpha)=\sum_{l_1,l_2=-L}^L \theta_l\, e^{2\pi i l_1 \frac{\alpha_1}{2^J}} \,e^{2\pi i l_2 \frac{\alpha_2}{2^{J}}},\qquad \alpha\in\{1,\dots,2^J\}^2.
\]
for some complex weights $\theta_l\in\C$. Since the constant vector is not in $A_g$ and $v\neq 0$, we have that $\theta_l\neq 0$ for some $l\neq (0,0)$. Without loss of generality, we can assume that there exists $l^*\in\{-L,\dots,L\}^2$ such that
\begin{equation}\label{eq:theta_l_nonzero}
\theta_{l^*}\neq 0,\qquad l_2^*\neq 0. 
\end{equation}
By construction of $A_f$ we have that
\begin{equation}\label{eq:T_j}
 \norm{\,^t\!A_f g}_0\ge \sum_{j=1}^{J-B-1}  \# \{k\in\{1,\dots,2^{J-j}\}^2:(g,\psi_{j,k}^1)_2\neq 0\}=:\sum_{j=1}^{J-B-1}  \# T_{j}
\end{equation}
We now want to find a lower bound for $\#T_j$.

Fix $j=1,\dots,J-B-1$. For simplicity of notation, set $\e_{l_i}(\cdot)=e^{2\pi i l_i \frac{\cdot}{2^J}}$ and $m_{k_i}=2^j(k_i-1)$. For $k\in \{1,\dots,2^{J-j}\}^2$ we have
\begin{multline*}
\begin{split}
 2^j(g,&\psi_{j,k}^1)_2=\sum_{l_1,l_2=-L}^L  \sum_{\alpha_1=1}^{2^j} \sum_{\alpha_2=1}^{2^{j-1}} \theta_l \bigl(\e_{l_1}(m_{k_1}+\alpha_1)\e_{l_2}(m_{k_2}+2^{j-1}+\alpha_2)\\
  &\qquad\qquad\qquad\qquad\qquad\qquad\qquad\qquad-\e_{l_1}(m_{k_1}+\alpha_1)\e_{l_2}(m_{k_2}+\alpha_2)\bigr)\\
  &=\sum_{l_1,l_2=-L}^L  \theta_l \e_{l_1}(m_{k_1})\e_{l_2}(m_{k_2}) \sum_{\alpha_1=1}^{2^j} \sum_{\alpha_2=1}^{2^{j-1}} \e_{l_1}(\alpha_1)\e_{l_2}(\alpha_2)\bigl(\e_{l_2}(2^{j-1})-1 \bigr)\\
  &=\sum_{l_2=-L}^L \zeta_{l_2}(k_1) \e_{l_2}(m_{k_2})
 \end{split}
\end{multline*}
where we have set
\[
 \zeta_{l_2}(k_1)=\sum_{l_1=-L}^L \theta_l \bigl(\e_{l_2}(2^{j-1})-1 \bigr) \Bigl( \sum_{\alpha_2=1}^{2^{j-1}} \e_{l_2}(\alpha_2)\Bigr)    \Bigl(\sum_{\alpha_1=1}^{2^j} \e_{l_1}(\alpha_1)\Bigr) \e_{l_1}(m_{k_1}).
\]
Since $L< 2^B$ with $B\le J-2$ and $j\le J-B-1$, by using standard identities for geometric sums it is easy to show that $ \bigl(\e_{l_2}(2^{j-1})-1 \bigr) \Bigl( \sum_{\alpha_2=1}^{2^{j-1}} \e_{l_2}(\alpha_2)\Bigr)    \Bigl(\sum_{\alpha_1=1}^{2^j} \e_{l_1}(\alpha_1)\Bigr)\neq 0$ for all $l_1$ and all $l_2\neq 0$. As a result, in view of \eqref{eq:theta_l_nonzero} we have that the polynomial in the complex variable $z$
\[
 p_{l_2^*}=\sum_{l_1=-L}^L \theta_{l_1,l_2^*} \bigl(\e_{l_2^*}(2^{j-1})-1 \bigr) \Bigl( \sum_{\alpha_2=1}^{2^{j-1}} \e_{l_2^*}(\alpha_2)\Bigr)    \Bigl(\sum_{\alpha_1=1}^{2^j} \e_{l_1}(\alpha_1)\Bigr) z^{l_1+L}
\]
is non trivial. By the fundamental theorem of algebra, it has at most $2L$ zeros. Therefore, writing $z=e^{2\pi i m_{k_1}2^{-J}}$, we obtain that $\#E_j\ge 2^{J-j}-2L$, where $E_j=\{k_1\in\{1,\dots,2^{J-j}\}:\zeta_{l_2^*}(k_1)\neq 0\}$. Take now $k_1\in E_j$. Arguing in a similar way and writing $z=e^{2\pi i m_{k_2}2^{-J}}$, we have that
\[
 \e_{L}(m_{k_2}) 2^j(g,\psi_{j,k}^1)_2= \sum_{l_2=-L}^L \zeta_{l_2}(k_1) z^{l_2+L}.
\]
Since $\zeta_{l_2^*}(k_1)\neq 0$, this polynomial in $z$ is non trivial, and so has at most $2L$ zeros. In other words, for $k_1\in E_j$ we have that
\[
 \#\{k_2\in\{1,\dots,2^{J-j}\}:(g,\psi_{j,k}^1)_2\neq 0\}\ge 2^{J-j}-2L.
\]
Recalling that $\#E_j\ge 2^{J-j}-2L$, this implies that $\#T_j\ge (2^{J-j}-2L)^2$. Finally, the result immediately follows from \eqref{eq:T_j}.
\end{proof}

We are now ready to prove Proposition~\ref{prop:new_uncertainty}. The proof follows the same argument used for Proposition~\ref{prop:normalized_UP}.
\begin{proof}[Proof of Proposition~\ref{prop:new_uncertainty}]
Define $D=\frac{2}{3}(1+C_\xi^{-1})$ (where $C_\xi$ is given by Lemma~\ref{lem:xi}) and take $q\in\R^n$ with $\norm{A_f q}_2>D$ and $\norm{\,^t\!A_g^\perp A_f q}_2\le 2/3$. Lemma~\ref{lem:properties} part 1 and the identity $A_f q=A_g(\,^t\!A_g A_f q)+A_g^\perp(\,^t\!A_g^\perp A_f q)$ yield
\[
 D<\norm{A_f q}_2=\bigl(\norm{\,^t\!A_g A_f q}_2^2 +\norm{\,^t\!A_g^\perp A_f q}_2^2\bigr)^{\frac{1}{2}}\le \norm{\,^t\!A_g A_f q}_2+ 2/3
\]
whence by construction of $D$ we obtain
\begin{equation}\label{eq:last2}
 \norm{\,^t\!A_g A_f q}_2>2C_\xi^{-1}/3.
\end{equation}
Set $v=\,^t\!A_g A_f q/\norm{\,^t\!A_g A_f q}_2$. By Lemma~\ref{lem:new_uncertainty}, the assumptions of Lemma~\ref{lem:xi} are satisfied with $p=\sum_{j=1}^{J-B-1} (2^{J-j}-2L)^2$ and $\zeta=m_g$; as a result $\xi_p(\,^t\!A_f A_g v)\ge C_\xi$. In other words, there exist $\alpha_1<\dots<\alpha_p$ such that $|(\,^t\!A_f A_g v)(\alpha_i)|\ge C_\xi$. Morevoer,  by Lemma~\ref{lem:properties} parts 1 and 2 we have $\norm{\,^t\!A_f A_g^\perp\,^t\!A_g^\perp A_f q}_2\le2/3$, whence $\norm{\,^t\!A_f A_g^\perp\,^t\!A_g^\perp A_f q}_\infty\le2/3$. As a consequence, by \eqref{eq:last2} and the identity $q=\norm{\,^t\!A_g A_f q}_2\,^t\!A_f A_g v +\,^t\!A_f A_g^\perp\,^t\!A_g^\perp A_f q$ we obtain
\[
 |q(\alpha_i)|\ge \norm{\,^t\!A_g A_f q}_2 |\,^t\!A_f A_g v(\alpha_i)| -|\,^t\!A_f A_g^\perp\,^t\!A_g^\perp A_f q(\alpha_i)|>\frac{2}{3}C_\xi^{-1}C_\xi-\frac{2}{3}=0,
\]
for every $i=1,\dots,p$. In other words, $\norm{q}_0\ge p$, as desired.
\end{proof}

\bibliographystyle{abbrv}
\bibliography{2015-disjoint_sparsity}

\begin{thebibliography}{10}

\bibitem{alberti2013multiple}
G.~S. Alberti.
\newblock On multiple frequency power density measurements.
\newblock {\em Inverse Problems}, 29(11):115007, 25, 2013.

\bibitem{2014-albertigs}
G.~S. Alberti.
\newblock Enforcing local non-zero constraints in pdes and applications to
  hybrid imaging problems.
\newblock {\em Communications in Partial Differential Equations},
  40(10):1855--1883, 2015.

\bibitem{2013-albertigs}
G.~S. Alberti.
\newblock {On multiple frequency power density measurements II. The full
  Maxwell's equations}.
\newblock {\em Journal of Differential Equations}, 258(8):2767 -- 2793, 2015.

\bibitem{alberti-ammari-ruan-2014}
G.~S. {Alberti}, H.~{Ammari}, and K.~{Ruan}.
\newblock {Multi-frequency acousto-electromagnetic tomography}.
\newblock {\em Contemp. Math. (to appear)}.

\bibitem{alberti-capdeboscq-2014}
G.~S. Alberti and Y.~Capdeboscq.
\newblock A propos de certains probl\`{e}mes inverses hybrides.
\newblock In {\em Seminaire: {E}quations aux {D}\'eriv\'ees {P}artielles.
  2013--2014}, S\'emin. \'Equ. D\'eriv. Partielles, page Exp. No. II. \'Ecole
  Polytech., Palaiseau.

\bibitem{alberti-capdeboscq-2015}
G.~S. Alberti and Y.~Capdeboscq.
\newblock {\em Lectures on elliptic methods for hybrid inverse problems}.
\newblock In preparation.

\bibitem{ammaribook}
H.~Ammari.
\newblock {\em An Introduction to Mathematics of Emerging Biomedical Imaging},
  volume~62 of {\em Math. Appl.}
\newblock Springer-Verlag, Berlin, 2008.

\bibitem{cap2008}
H.~Ammari, E.~Bonnetier, Y.~Capdeboscq, M.~Tanter, and M.~Fink.
\newblock Electrical impedance tomography by elastic deformation.
\newblock {\em SIAM J. Appl. Math.}, 68(6):1557--1573, 2008.

\bibitem{2014b-ammari-seppecher}
H.~Ammari, E.~Bossy, J.~Garnier, L.~H. Nguyen, and L.~Seppecher.
\newblock Reconstruction of a piecewise smooth absorption coefficient by an
  acousto-optic process.
\newblock {\em Comm. Part. Diff. Equat.}, 38(10):1737--1762, 2013.

\bibitem{2014a-ammari-seppecher}
H.~Ammari, E.~Bossy, J.~Garnier, L.~H. Nguyen, and L.~Seppecher.
\newblock A reconstruction algorithm for ultrasound-modulated diffuse optical
  tomography.
\newblock {\em Proc. Amer. Math. Soc.}, 142(9):3221--3236, 2014.

\bibitem{2010-ammari-bossy}
H.~Ammari, E.~Bossy, V.~Jugnon, and H.~Kang.
\newblock Mathematical modeling in photoacoustic imaging of small absorbers.
\newblock {\em SIAM Rev.}, 52(4):677--695, 2010.

\bibitem{2011-ammari-bossy}
H.~Ammari, E.~Bossy, V.~Jugnon, and H.~Kang.
\newblock Reconstruction of the optical absorption coefficient of a small
  absorber from the absorbed energy density.
\newblock {\em SIAM J. Appl. Math.}, 71(3):676--693, 2011.

\bibitem{cap2011}
H.~Ammari, Y.~Capdeboscq, F.~de~Gournay, A.~Rozanova-Pierrat, and F.~Triki.
\newblock Microwave imaging by elastic deformation.
\newblock {\em SIAM J. Appl. Math.}, 71(6):2112--2130, 2011.

\bibitem{ammari2012quantitative}
H.~Ammari, J.~Garnier, W.~Jing, and L.~H. Nguyen.
\newblock Quantitative thermo-acoustic imaging: {A}n exact reconstruction
  formula.
\newblock {\em J. Differential Equations}, 254(3):1375--1395, 2013.

\bibitem{2014c-ammari-seppecher}
H.~Ammari, L.~H. Nguyen, and L.~Seppecher.
\newblock Reconstruction and stability in acousto-optic imaging for absorption
  maps with bounded variation.
\newblock {\em J. Funct. Anal.}, 267(11):4361--4398, 2014.

\bibitem{bal2012_review}
G.~Bal.
\newblock Hybrid inverse problems and internal functionals.
\newblock In G.~Uhlmann, editor, {\em Inverse problems and applications: inside
  out. {II}}, volume~60 of {\em Math. Sci. Res. Inst. Publ.}, pages 325--368.
  Cambridge Univ. Press, Cambridge, 2013.

\bibitem{bal2012inversediffusion}
G.~Bal, E.~Bonnetier, F.~Monard, and F.~Triki.
\newblock Inverse diffusion from knowledge of power densities.
\newblock {\em Inverse Probl. Imaging}, 7(2):353--375, 2013.

\bibitem{2010-bal-jollivet-jugnon}
G.~Bal, A.~Jollivet, and V.~Jugnon.
\newblock Inverse transport theory of photoacoustics.
\newblock {\em Inverse Problems}, 26(2):025011, 35, 2010.

\bibitem{bal-kui-2011}
G.~Bal and K.~Ren.
\newblock Multi-source quantitative photoacoustic tomography in a diffusive
  regime.
\newblock {\em Inverse Problems}, 27(7):075003, 20, 2011.

\bibitem{2012-bal-ren}
G.~Bal and K.~Ren.
\newblock On multi-spectral quantitative photoacoustic tomography in diffusive
  regime.
\newblock {\em Inverse Problems}, 28(2):025010, 13, 2012.

\bibitem{bal2010inverse}
G.~Bal and G.~Uhlmann.
\newblock Inverse diffusion theory of photoacoustics.
\newblock {\em Inverse Problems}, 26(8):085010, 20, 2010.

\bibitem{2008-blumensath-davies}
T.~Blumensath and M.~E. Davies.
\newblock Iterative thresholding for sparse approximations.
\newblock {\em J. Fourier Anal. Appl.}, 14(5-6):629--654, 2008.

\bibitem{2009-bobin-moudden-fadili-starck}
J.~Bobin, Y.~Moudden, J.~Fadili, and J.-L. Starck.
\newblock Morphological diversity and sparsity for multichannel data
  restoration.
\newblock {\em J. Math. Imaging Vision}, 33(2):149--168, 2009.

\bibitem{2006-MMCA}
J.~Bobin, Y.~Moudden, and J.-L. Starck.
\newblock Enhanced source separation by morphological component analysis.
\newblock In {\em Acoustics, Speech and Signal Processing, 2006. ICASSP 2006
  Proceedings. 2006 IEEE International Conference on}, volume~5, pages V--V,
  May 2006.

\bibitem{2001-bofill-zibulevski}
P.~Bofill and M.~Zibulevsky.
\newblock Underdetermined blind source separation using sparse representations.
\newblock {\em Signal Processing}, 81(11):2353 -- 2362, 2001.

\bibitem{1998-bruce-etal}
A.~G. Bruce, S.~Sardy, and P.~Tseng.
\newblock Block coordinate relaxation methods for nonparamatric signal
  denoising.
\newblock {\em Proc. SPIE}, 3391:75--86, 1998.

\bibitem{2009-donoho-review}
A.~M. Bruckstein, D.~L. Donoho, and M.~Elad.
\newblock From sparse solutions of systems of equations to sparse modeling of
  signals and images.
\newblock {\em SIAM Rev.}, 51(1):34--81, 2009.

\bibitem{2015-calatroni}
L.~{Calatroni}, C.~{Chung}, J.~C. {De Los Reyes}, C.-B. {Sch{\"o}nlieb}, and
  T.~{Valkonen}.
\newblock {Bilevel approaches for learning of variational imaging models}.
\newblock {\em ArXiv e-prints}, May 2015.

\bibitem{1999-ridgelets}
E.~J. Cand{\`e}s and D.~L. Donoho.
\newblock Ridgelets: a key to higher-dimensional intermittency?
\newblock {\em Philosophical Transactions of the Royal Society of London A:
  Mathematical, Physical and Engineering Sciences}, 357(1760):2495--2509, 1999.

\bibitem{2004-curvelets}
E.~J. Cand{\`e}s and D.~L. Donoho.
\newblock New tight frames of curvelets and optimal representations of objects
  with piecewise {$C^2$} singularities.
\newblock {\em Comm. Pure Appl. Math.}, 57(2):219--266, 2004.

\bibitem{2011-candes-rpca}
E.~J. Cand\`{e}s, X.~Li, Y.~Ma, and J.~Wright.
\newblock Robust principal component analysis?
\newblock {\em J. ACM}, 58(3):11:1--11:37, June 2011.

\bibitem{2006-candes-romberg}
E.~J. Candes and J.~Romberg.
\newblock Quantitative robust uncertainty principles and optimally sparse
  decompositions.
\newblock {\em Foundations of Computational Mathematics}, 6(2):227--254, 2006.

\bibitem{2009-cap-etal}
Y.~Capdeboscq, J.~Fehrenbach, F.~de~Gournay, and O.~Kavian.
\newblock Imaging by modification: numerical reconstruction of local
  conductivities from corresponding power density measurements.
\newblock {\em SIAM J. Imaging Sci.}, 2(4):1003--1030, 2009.

\bibitem{2004-chambolle}
A.~Chambolle.
\newblock An algorithm for total variation minimization and applications.
\newblock {\em Journal of Mathematical Imaging and Vision}, 20(1-2):89--97,
  2004.

\bibitem{2006-chen-huo-MMV}
J.~Chen and X.~Huo.
\newblock Theoretical results on sparse representations of multiple-measurement
  vectors.
\newblock {\em Signal Processing, IEEE Transactions on}, 54(12):4634--4643, Dec
  2006.

\bibitem{2001-basispursuit}
S.~S. Chen, D.~L. Donoho, and M.~A. Saunders.
\newblock Atomic decomposition by basis pursuit.
\newblock {\em SIAM Rev.}, 43(1):129--159, 2001.
\newblock Reprinted from SIAM J. Sci. Comput. {{\bf{2}}0} (1998), no. 1, 33--61
  (electronic) [ MR1639094 (99h:94013)].

\bibitem{2005-MMV}
S.~Cotter, B.~Rao, K.~Engan, and K.~Kreutz-Delgado.
\newblock Sparse solutions to linear inverse problems with multiple measurement
  vectors.
\newblock {\em Signal Processing, IEEE Transactions on}, 53(7):2477--2488, July
  2005.

\bibitem{2011-cox-tarvainen-arridge}
B.~Cox, T.~Tarvainen, and S.~Arridge.
\newblock Multiple illumination quantitative photoacoustic tomography using
  transport and diffusion models.
\newblock In {\em Tomography and inverse transport theory}, volume 559 of {\em
  Contemp. Math.}, pages 1--12. Amer. Math. Soc., Providence, RI, 2011.

\bibitem{2006-cox-arridge}
B.~T. Cox, S.~R. Arridge, K.~P. K\"{o}stli, and P.~C. Beard.
\newblock Two-dimensional quantitative photoacoustic image reconstruction of
  absorption distributions in scattering media by use of a simple iterative
  method.
\newblock {\em Appl. Opt.}, 45(8):1866--1875, Mar 2006.

\bibitem{2009-cox}
B.~T. Cox, J.~G. Laufer, and P.~C. Beard.
\newblock The challenges for quantitative photoacoustic imaging, 2009.

\bibitem{2015-ding-etal}
T.~Ding, K.~Ren, and S.~Vall\'elian.
\newblock A one-step reconstruction algorithm for quantitative photoacoustic
  imaging.
\newblock {\em Preprint}, 2015.

\bibitem{2010-donoho-kutyniok}
D.~Donoho and G.~Kutyniok.
\newblock Microlocal analysis of the geometric separation problem.
\newblock {\em Comm. Pure Appl. Math.}, 66(1):1--47, 2013.

\bibitem{2012-donoho-etal}
D.~Donoho, Y.~Tsaig, I.~Drori, and J.-L. Starck.
\newblock Sparse solution of underdetermined systems of linear equations by
  stagewise orthogonal matching pursuit.
\newblock {\em Information Theory, IEEE Transactions on}, 58(2):1094--1121, Feb
  2012.

\bibitem{2003-donoho-elad}
D.~L. Donoho and M.~Elad.
\newblock Optimally sparse representation in general (nonorthogonal)
  dictionaries via {$l^1$} minimization.
\newblock {\em Proc. Natl. Acad. Sci. USA}, 100(5):2197--2202 (electronic),
  2003.

\bibitem{2001-donoho-huo}
D.~L. Donoho and X.~Huo.
\newblock Uncertainty principles and ideal atomic decomposition.
\newblock {\em IEEE Trans. Inform. Theory}, 47(7):2845--2862, 2001.

\bibitem{1989-donoho-starck}
D.~L. Donoho and P.~B. Stark.
\newblock Uncertainty principles and signal recovery.
\newblock {\em SIAM Journal on Applied Mathematics}, 49(3):906--931, 1989.

\bibitem{2002-elad-uncertainty}
M.~Elad and A.~M. Bruckstein.
\newblock A generalized uncertainty principle and sparse representation in
  pairs of bases.
\newblock {\em IEEE Trans. Inform. Theory}, 48(9):2558--2567, 2002.

\bibitem{2005-inpainting}
M.~Elad, J.-L. Starck, P.~Querre, and D.~L. Donoho.
\newblock Simultaneous cartoon and texture image inpainting using morphological
  component analysis ({MCA}).
\newblock {\em Appl. Comput. Harmon. Anal.}, 19(3):340--358, 2005.

\bibitem{2012-gao-osher-zhao}
H.~Gao, S.~Osher, and H.~Zhao.
\newblock Quantitative photoacoustic tomography.
\newblock In {\em Mathematical modeling in biomedical imaging. {II}}, volume
  2035 of {\em Lecture Notes in Math.}, pages 131--158. Springer, Heidelberg,
  2012.

\bibitem{2005-georgiev-theis-cichocki}
P.~Georgiev, F.~Theis, and A.~Cichocki.
\newblock Sparse component analysis and blind source separation of
  underdetermined mixtures.
\newblock {\em Neural Networks, IEEE Transactions on}, 16(4):992--996, July
  2005.

\bibitem{2015-haltmeier-etal}
M.~Haltmeier, L.~Neumann, and S.~Rabanser.
\newblock Single-stage reconstruction algorithm for quantitative photoacoustic
  tomography.
\newblock {\em Inverse Problems}, 31(6):065005, 2015.

\bibitem{2012-kittipoom-kutyniok-lim}
P.~Kittipoom, G.~Kutyniok, and W.-Q. Lim.
\newblock Construction of compactly supported shearlet frames.
\newblock {\em Constr. Approx.}, 35(1):21--72, 2012.

\bibitem{2012-kuchment}
P.~Kuchment.
\newblock Mathematics of hybrid imaging: A brief review.
\newblock In I.~Sabadini and D.~C. Struppa, editors, {\em The Mathematical
  Legacy of Leon Ehrenpreis}, volume~16 of {\em Springer Proceedings in
  Mathematics}, pages 183--208. Springer Milan, 2012.

\bibitem{2011-kuchment-kunyansky}
P.~Kuchment and L.~Kunyansky.
\newblock Mathematics of photoacoustic and thermoacoustic tomography.
\newblock In O.~Scherzer, editor, {\em Handbook of Mathematical Methods in
  Imaging}, pages 817--865. Springer New York, 2011.

\bibitem{2012-kutyniok}
G.~Kutyniok.
\newblock Data separation by sparse representations.
\newblock In {\em Compressed sensing}, pages 485--514. Cambridge Univ. Press,
  Cambridge, 2012.

\bibitem{2014-kutyniok}
G.~Kutyniok.
\newblock Geometric separation by single-pass alternating thresholding.
\newblock {\em Appl. Comput. Harmon. Anal.}, 36(1):23--50, 2014.

\bibitem{2011-kutyniok-lim}
G.~Kutyniok and W.-Q. Lim.
\newblock Compactly supported shearlets are optimally sparse.
\newblock {\em J. Approx. Theory}, 163(11):1564--1589, 2011.

\bibitem{2005-labate-lim-kut-weiss}
D.~Labate, W.-Q. Lim, G.~Kutyniok, and G.~Weiss.
\newblock Sparse multidimensional representation using shearlets.
\newblock {\em Proc. SPIE}, 5914:59140U--59140U--9, 2005.

\bibitem{1993-mallat-zhang}
S.~Mallat and Z.~Zhang.
\newblock Matching pursuits with time-frequency dictionaries.
\newblock {\em Signal Processing, IEEE Transactions on}, 41(12):3397--3415, Dec
  1993.

\bibitem{2013-mccoy-tropp}
M.~B. {McCoy} and J.~A. {Tropp}.
\newblock {The achievable performance of convex demixing}.
\newblock {\em ArXiv e-prints}, Sept. 2013.

\bibitem{2014-scherzer-naetar}
W.~Naetar and O.~Scherzer.
\newblock Quantitative photoacoustic tomography with piecewise constant
  material parameters.
\newblock {\em SIAM J. Imaging Sci.}, 7(3):1755--1774, 2014.

\bibitem{2014-pulkkinen-cox-arridge}
A.~Pulkkinen, B.~T. Cox, S.~R. Arridge, J.~P. Kaipio, and T.~Tarvainen.
\newblock A {B}ayesian approach to spectral quantitative photoacoustic
  tomography.
\newblock {\em Inverse Problems}, 30(6):065012, 18, 2014.

\bibitem{2013-ren-gao-zhao}
K.~Ren, H.~Gao, and H.~Zhao.
\newblock A hybrid reconstruction method for quantitative {PAT}.
\newblock {\em SIAM J. Imaging Sci.}, 6(1):32--55, 2013.

\bibitem{2009-razansky}
A.~Rosenthal, D.~Razansky, and V.~Ntziachristos.
\newblock Quantitative optoacoustic signal extraction using sparse signal
  representation.
\newblock {\em Medical Imaging, IEEE Transactions on}, 28(12):1997--2006, Dec
  2009.

\bibitem{2010-rubinstein}
R.~Rubinstein, A.~Bruckstein, and M.~Elad.
\newblock Dictionaries for sparse representation modeling.
\newblock {\em Proceedings of the IEEE}, 98(6):1045--1057, June 2010.

\bibitem{1992-rudin-osher-fatemi}
L.~I. Rudin, S.~Osher, and E.~Fatemi.
\newblock Nonlinear total variation based noise removal algorithms.
\newblock {\em Physica D: Nonlinear Phenomena}, 60(1-4):259 -- 268, 1992.

\bibitem{seobook}
J.~K. Seo and E.~J. Woo.
\newblock {\em Nonlinear Inverse Problems in Imaging}.
\newblock Wiley, 2013.

\bibitem{2011-shao-cox-zemp}
P.~Shao, B.~Cox, and R.~J. Zemp.
\newblock Estimating optical absorption, scattering, and grueneisen
  distributions with multiple-illumination photoacoustic tomography.
\newblock {\em Appl. Opt.}, 50(19):3145--3154, Jul 2011.

\bibitem{2004-starck-MCA}
J.-L. Starck, M.~Elad, and D.~Donoho.
\newblock Redundant multiscale transforms and their application for
  morphological component separation.
\newblock In {\em Advances in Imaging and Electron Physics}, volume 132 of {\em
  Advances in Imaging and Electron Physics}, pages 287 -- 348. Elsevier, 2004.

\bibitem{2005-stark-elad-donoho}
J.-L. Starck, M.~Elad, and D.~Donoho.
\newblock Image decomposition via the combination of sparse representations and
  a variational approach.
\newblock {\em Image Processing, IEEE Transactions on}, 14(10):1570--1582, Oct
  2005.

\bibitem{2014-toumi-caldarelli-torresani}
I.~Toumi, S.~Caldarelli, and B.~Torr\'esani.
\newblock A review of blind source separation in {NMR} spectroscopy.
\newblock {\em Progress in Nuclear Magnetic Resonance Spectroscopy}, 81(0):37
  -- 64, 2014.

\bibitem{2004-tropp-greed}
J.~Tropp.
\newblock Greed is good: algorithmic results for sparse approximation.
\newblock {\em Information Theory, IEEE Transactions on}, 50(10):2231--2242,
  Oct 2004.

\end{thebibliography}

\end{document}